\numberwithin{equation}{section}
\newtheorem{example}{Example}[section]
\newtheorem{theorem}[example]{Theorem}
\newtheorem{proposition}[example]{Proposition}
\newtheorem{definition}[example]{Definition}
\newtheorem{lemma}[example]{Lemma}
\newtheorem{corollary}[example]{Corollary}
\newtheorem{remark}[example]{Remark}
\def\x{{\it x}}
\newcommand{\R}{{\mathbb R}}
\newcommand{\Grad}{\mathcal{GY}^{+\infty,-\infty}(\O;\R^{n\times n})}
\newcommand{\Gradrho}{\mathcal{GY}_\varrho^{+\infty,-\infty}(\O;\R^{n\times n})}
\newcommand{\Rm}{{\R^{n\times n}}}
\newcommand{\N}{{\mathbb N}}
\renewcommand{\det}{{\rm det}\, }
\newcommand{\reg}{{\R^{2\times 2}_{\rm inv}}}
\newcommand{\regplus}{{\R^{2\times 2}_{\rm inv+}}}
\newcommand{\be}{\begin{eqnarray}}
\newcommand{\ee}{\end{eqnarray}}
\renewcommand{\d}{{\rm d}}
\newcommand{\ra}{\right\rangle}
\newcommand{\la}{\left\langle}
\newcommand{\md}{{\rm d}}
\renewcommand{\O}{\Omega}
\newcommand{\A}[1]{\langle#1\rangle}
\newcommand{\Rn}{\R^{n}}
\newcommand{\rca}{{\rm rca}}
\newcommand{\Rrho}{{R_\varrho^{2 \times 2}}}
\newcommand{\RrhoPlus}{{R_{\varrho+}^{2 \times 2}}}
\newcommand{\wstar}{\stackrel{*}{\rightharpoonup}}
\renewcommand{\Gradrho}{\mathcal{GY}_\varrho^{\infty,-\infty}(\O;\R^{2\times 2})}
\newcommand{\GradPlus}{\mathcal{GY}_+^{\infty,-\infty}(\O;\R^{2\times 2})}
\newcommand{\EEE}{\color{black}}
\newcommand{\ONE}{\color{black}}
\newcommand{\WE}{\color{black}}
\title{Characterization of gradient Young measures generated by homeomorphisms in the plane}
\begin{document}

\begin{center}
{\huge \bf  Characterization of gradient Young measures generated by homeomorphisms in the plane}\\
\vspace{3mm}
Barbora Bene\v{s}ov\'{a}$^{1,2}$ \& Martin Kru\v{z}\'{\i}k$^{3,4}$ \\
\vspace{2mm}
$^1$ Department of Mathematics I, RWTH Aachen University, D-52056 Aachen, Germany\\
$^2$ Institute for Mathematics, University of W\"{u}rzburg, Emil-
Fischer-Stra\ss e 40, D-97074 W\"{u}rzburg, Germany\\
$^2$Institute of Information Theory and Automation of the CAS, Pod vod\'{a}renskou
v\v{e}\v{z}\'{\i}~4, CZ-182~08~Praha~8, Czech Republic \\
$^3$ Faculty of Civil Engineering, Czech Technical
University, Th\'{a}kurova 7, CZ-166~ 29~Praha~6, Czech Republic

\bigskip

\bigskip
\bigskip
\begin{abstract}
We  characterize Young measures generated by  gradients of bi-Lipschitz  orientation-preserving  maps in the plane.  This question is motivated by variational problems in nonlinear elasticity where the orientation preservation and injectivity of the admissible deformations are key requirements. These results enable us  to derive new weak$^*$ lower semicontinuity results for integral functionals depending on  gradients. As an application,  we show the existence of a minimizer for an integral functional with nonpolyconvex energy density among bi-Lipschitz homeomorphisms.       
\end{abstract}
\end{center}
\medskip
\noindent
{\bf Key Words:} Orientation-preserving mappings, Young measures \\
\medskip
\noindent
{\bf AMS Subject Classification.}
49J45, 35B05

\section{Introduction} 
 The aim of this paper is to describe oscillatory properties of sequences of gradients of bi-Lipschitz maps in the plane that preserve the orientation, i.e., the gradients of which have a positive determinant. Such mappings naturally appear in non-linear hyperelasticity where they act as \emph{deformations}.

Although there are more general definitions of a {deformation}, i.e.\ a function $y: \Omega \to \R^n$ that maps each point in the reference configuration to its current position, we confine ourselves to the one by P.G.~Ciarlet \cite[p.~27]{ciarlet} which requires injectivity in the domain $\Omega\subset\R^n$, sufficient smoothness and orientation preservation. Here, ``sufficient smoothness'' will mean that  a considered deformation will be a  \emph{homeomorphism} in order to prevent cracks or cavitation and its (weak) deformation gradient will be integrable, i.e.\ $y\in W^{1,p}(\O;\R^n)$ with $1 < p \le +\infty$.

Clearly, a deformation is an invertible map but, in our modeling, we put an additional requirement on $y^{-1}$
---namely, it should again qualify as a deformation, which is motivated by the fact that we aim to model \emph{the elastic response of the specimen}. In the elastic regime, the specimen returns to its original shape after all loads are released and so, since the r\^{o}les of the reference and the deformed configuration can be exchanged, we would like to understand the releasing of loads as applying a new loading, inverse to the original one, in the deformed configuration and the ``return'' of the specimen as the corresponding deformation. Thus, we define the following set of deformations
\begin{align}
W^{1,p,-p}_+(\Omega;\mathbb{R}^n) = \Big\{&y: \Omega \mapsto y(\Omega) \text{ an orientation preserving homeomorphism}; \nonumber \\ &y\in W^{1,p}(\O;\R^n) \text { and } y^{-1}\in W^{1,p}(y(\Omega);\R^n) \Big\}\ .
\label{deformations}
\end{align}

Although invertibility of deformations is a fundamental requirement in elasticity it is still often omitted in modeling due to the lack of appropriate mathematical tools \WE to handle it. \EEE However, let us mention that some  ideas of incorporating invertibility  of the deformation already appeared e.g.~in \cite{ball81,ciarlet-necas,fonseca-ganbo,currents,tang,MullerQi,MullerSpector,Henao} and very recently e.g.~in \cite{iwaniec1,daneri-pratelli}.

Stable states of the specimen are found by minimizing
\be\label{motivation}
J(y)=\int_\O W(\nabla y(x))\,\md x\ ,
\ee
where $W \colon \R^{n\times n}\to\R$ is the stored energy density, i.e.\ the potential of the first Piola-Kirchhoff stress tensor, over the set of admissible deformations \eqref{deformations}; possibly with respect to a Dirichlet boundary condition $y = y_0$ on $\partial \Omega$.

A natural, still open, question is under which minimal conditions on a continuous $W$ satisfying  $W(A)=+\infty$ if $\det A\le 0$ and 
\begin{equation}\label{blowup}
W(A) \to +\infty \qquad \text{whenever} \qquad \det\, A \to 0_+ 
\end{equation}
we can guarantee that $J$ is weakly lower-semicontinuous on  \eqref{deformations}. In fact, Problem 1 in  Ball's paper \cite{ball-puzzles}: ``\emph{Prove the existence of energy minimizers for elastostatics for quasiconvex stored-energy functions satisfying \eqref{blowup}}'' is closely related. 

Here we answer this question for the special case of bi-Lipschitz mappings in the plane; i.e.\ we restrict our attention to the setting $p=\infty, n=2$. It is natural to conjecture that the sought equivalent characterization of weak* lower semicontinuity will lead to a suitable notion of quasiconvexity. We confirm this conjecture and show that $J$ is weakly* lower semicontinuous on $W^{1,\infty,-\infty}_+(\Omega;\mathbb{R}^2)$ if and only if it is \emph{bi-quasiconvex} in the sense of Definition \ref{biqc-Def}.


\begin{remark}[Quasiconvexity]
We say that $W$ is quasiconvex if
\be\label{quasiconvexity} |\O|W(A)\le\int_\O W(A+\nabla\varphi(x))\,\md x\ \ee 
holds for all $\varphi\in W^{1,\infty}_0(\O;\R^n)$ and all $A\in\R^{n\times n}$  \cite{morrey}. It is well known \cite{dacorogna} that if $W$ takes only finite values and is quasiconvex then $J$  in \eqref{motivation} is   weakly* lower semicontinuous on $W^{1,\infty}(\Omega;\mathbb{R}^n)$ and so, in particular, also on $W^{1,\infty,-\infty}_+(\Omega;\mathbb{R}^2)$. 

Nevertheless, as we shall see, classical quasiconvexity is too restrictive in the bi-Lipschitz setting; indeed, since we narrowed the set of deformations it can be expected that a larger class of energies will lead to weak* lower semicontinuity of $J$. This can be also understood from a mechanical point of view: quasiconvex materials are described by energies having the property that among all \emph{deformations} with affine boundary data the affine ones are stable. Thus, since we now restricted the set of deformations it seems natural to verify \eqref{quasiconvexity} only for bi-Lipschitz functions; this is indeed the sought after convexity notion which we call bi-quasiconvexity (cf. Def. \ref{biqc-Def}).
\end{remark}

To prove our main result, we completely and explicitly characterize gradient Young measures generated by sequences in  $W^{1,\infty,-\infty}_+(\Omega;\mathbb{R}^2)$ (cf. Section \ref{results}). Young measures extend the notion of solutions from Sobolev mappings to parametrized measures \cite{ball3,fonseca-leoni,pedregal,r,schonbek,tartar,tartar1,y}. The idea is to describe the limit behavior of $\{J(y_k)\}_{k\in\N}$ along a minimizing sequence $\{y_k\}_{k\in\N}$. Actually, one needs to work with the so-called gradient Young measures because it is the gradient of the deformation entering the energy in \eqref{motivation}. Their explicit characterization  is due to Kinderlehrer and Pedregal \cite{k-p1,k-p}; however, it does not take into account any constraint on determinants or invertibility of the generating mappings. In spite of this drawback, gradient Young measures are massively used in literature to model solid-to-solid phase transitions as appearing in, e.g., shape memory alloys; cf.~\cite{ball-james1,mueller,kruzik-luskin,pedregal,r}. 

 Yet, not excluding matrices with a negative determinant may add non-realistic phenomena to the model. Indeed, it is well-known that the modeling of solid-to-solid phase transitions via Young measures is closely related to the so-called quasiconvex envelope of $W$ which must be convex along rank-one lines, i.e.\ lines whose elements differ by a rank-one matrix. Not excluding matrices with negative determinants, however, adds many non-physical rank-one lines to the problem. Notice, for instance,  that  {\it any} element of SO$(2)$ is on a rank-one line with {\it any} element of $\mathrm{O}(2)\setminus{\rm SO}(2)$. Consequently, the determinant must inevitably change its sign on such line. 




 The first attempt to include constraints on the sign of the determinant of the generating sequence appeared in \cite{quasiregular} where quasi-regular generating sequences in the plane were considered; however injectivity of the mappings could only be treated in the homogeneous case. Then, in \cite{bbmkgpYm} the characterization of gradient Young measures generated by sequences whose gradients are invertible matrices for the case where gradients as well as their inverse matrices are bounded in the $L^\infty$-norm was given. Very recently, Koumatos, Rindler, and Wiedemann \cite{krw} characterized Young measures generated by orientation preserving maps in $W^{1,p}$ for $1<p<n$; however they did not account for the restriction that deformations should be injective. 

 Therefore, this contribution (to our best knowledge) presents the first characterization of Young measures that are generated by sequences that are orientation-preserving  and \emph{globally invertible} and so qualify to be admissible deformations in elasticity.

Generally speaking, the main difficulty in characterizing sets of Young measures generated by deformations (or, at least, mappings having constraints on the invertibility and/or determinant of the deformation gradient) is that this constraint is \emph{non-convex}. Thus, many of the standardly used techniques such as smoothing by a mollifier kernel are not applicable. In our context, we need to be able to modify the generating sequence on a vanishingly small set near the boundary to have the same boundary conditions as the limit; i.e.\ to construct a cut-off technique. It can be seen from \eqref{quasiconvexity}, that standard proofs of characterizations of gradient Young measures \cite{k-p1,k-p} or weak lower semicontinuity of quasiconvex functionals \cite{dacorogna} \emph{will rely} on such techniques since the test functions in \eqref{quasiconvexity} have fixed boundary data. Usually, the cut-off is realized by convex averaging which is, of course, ruled out here. Novel ideas in \cite{bbmkgpYm,krw} are to solve differential inclusions near the boundary to overcome this drawback.  This allows to impose restrictions on the determinant of the generating sequence in several ``soft-regimes''; nevertheless, such techniques have not been generalized to more rigid constraints like the global invertibility.

Here we follow a different approach and, for bi-Lipschitz mappings in the plane, we obtain the result by exploiting bi-Lipschitz extension theorems \cite{daneri-pratelli-extension,tukia-ext}. Thus, by following a strategy inspired by \cite{daneri-pratelli} we modify the generating sequence (on a set of gradually vanishing measure near the boundary) first on a one-dimensional grid and then extend it.  The main reason why we confine ourselves to the bi-Lipschitz case and do not work in $W^{1,p,-p}_+(\Omega;\mathbb{R}^2)$ with $p< \infty$ is the fact that our technique relies on the extension theorem or, in other words, a full characterization of traces of bi-Lipschitz functions. To our best knowledge, such a characterization is at the moment completely open in $W^{1,p,-p}_+(\Omega;\mathbb{R}^2)$ with $p< \infty$. Still, let us point out its importance for finding minimizers of $J$ over \eqref{deformations}: in fact, constructing an extension theorem allows to precisely characterize the set of Dirichlet boundary data admissible for this problem. Notice that this question appears also in the existence proof for polyconvex materials and usually one assumes there that the set of admissible deformations is nonempty; \cite{ciarlet}.

\begin{remark}[Growth conditions] 
Even though in this paper we restrict our attention to bi-Lipschitz functions, let us point under which growth of the energy we can guarantee that the minimizing sequence of $J$ lies in $W^{1,p,-p}_+(\Omega;\mathbb{R}^n)$. Namely, it follows from the works of J.M. Ball \cite{ball77, ball81} that it suffices to require that $W$ is finite only on the set of matrices with positive determinant and (``cof'' stands for the cofactor in dimension 2 or 3)
\begin{equation}
C\Big(|A|^p + \frac{1}{\det\,A} + \frac{|\mathrm{cof}(A)|^p}{\det\, A^{p-1}} - 1\Big) \leq W(A) \leq C\Big(|A|^p + \frac{1}{\det\, A} + \frac{|\mathrm{cof}(A)|^p}{\det\, A^{p-1}} + 1\Big),
\label{growth-improved}
\end{equation}
as well as fix suitable boundary data (for example bi-Lipschitz ones).\footnote{As pointed above, since the traces of functions in $W^{1,p,-p}_+(\Omega;\mathbb{R}^2)$ are not precisely characterized to date, it is hard to decide what ``suitable boundary data'' are. In any case, in the plane bi-Lipschitz boundary data are sufficient.}

Polyconvexity, i.e.\ convexity in all minors of $A$, is fully compatible with such growth conditions (they are themselves polyconvex) whence if $W$ is polyconvex minimizers of \eqref{motivation} \emph{over $W^{1,p}(\O;\R^n)$, $p>n$} are indeed deformations;  i.e.\ are globally invertible and elements of $W^{1,p,-p}_+(\Omega;\mathbb{R}^n)$. We refer, e.g.~, to \cite{ciarlet,dacorogna} for various generalizations  of this result. However, while polyconvexity is a sufficient condition it is not  a \emph{necessary one}.

On the other hand, classical results on quasiconvexity yielding existence of minimizers \cite{dacorogna} are compatible with neither the growth conditions proposed in this remark nor \eqref{blowup}. In fact, existence of a minimizer of \eqref{motivation} on $W^{1,p}(\O;\R^n)$ for quasiconvex $W$ can be, to date, proved only if 
\be\label{gr-cond}
c(-1+|A|^p)\le W(A)\le \tilde c(1+|A|^p)\ .
\ee

The reason why the current proofs of existence of minimizers for quasiconvex cannot be extended to \eqref{growth-improved} is exactly the non-convexity detailed above.
\end{remark}

The plan of the paper is as follows. We first introduce necessary definitions and tools in Section~\ref{Start}. Then we state the main results in 
Section~\ref{results}. Proofs are postponed to Section~\ref{Proofs} while the novel cut-off technique is presented in Section \ref{sect-cutOff}.

\bigskip
 
\section{Preliminaries}\label{Start}
Before stating our main theorems in Section~\ref{results}, let us summarize, at this point, the notation as well as background information that we shall use later on. 
 
We define the following subsets of the set of invertible matrices:
\begin{align}
\label{Rvarrho} \Rrho&=\{A\in\R^{2\times 2}\mbox{ invertible};\ |A^{-1}|\leq \varrho\ \&\  |A| \leq \varrho \}\ , \\
\label{Rvarrho+ }\!\!\!\! \RrhoPlus&=\{A\in \Rrho;\  {\rm det}\ A>0\}\,
\end{align}
for $1\leq\varrho<\infty$. Note that both $\Rrho$ and  $\RrhoPlus$  are compact.
 Set
$$
\R^{2 \times 2}_\mathrm{inv} = \bigcup_\varrho \Rrho \qquad \qquad 
\R^{2 \times 2}_\mathrm{inv+} = \bigcup_\varrho \RrhoPlus.
$$

We assume that the matrix norm used above is sub-multiplicative, i.e.\ that $|AB|\leq|A||B|$ for all $A,B\in\R^{2\times 2}$  and such that the norm of the identity matrix  is one.
This means that if $A\in\RrhoPlus$ then $\min(|A|,|A^{-1}|)\geq 1/\varrho$.

\begin{definition}
A mapping $y:\O\to\R^2$ is called L-bi-Lipschitz (or shortly bi-Lipschitz) if there is $L\ge 1$ such that for all $x_1,x_2\in\O$ 
\begin{equation}\label{bi-li}
\frac{1}{L}|x_1-x_2|\le |y(x_1)-y(x_2)|\le L|x_1-x_2|\ .
\end{equation}
The number $L$ is called the bi-Lipschitz constant of $y$.
\end{definition}

This means that $y$ as well as its inverse $y^{-1}$ are Lipschitz continuous, hence $y$ is homeomorphic. Notice that $\frac{1}{L}\le|\nabla y(x)|\le L$ for almost all $x\in\O$.  

\begin{definition}
We say that $\{y_k\}_{k\in\N}\subset W^{1,\infty,-\infty}_+(\O;\R^2)$ is bounded in $W^{1,\infty,-\infty}_+(\O;\R^2)$ if the bi-Lipschitz constants of $y_k$, $k\in\N$, are uniformly bounded and $\{y_k\}_{k\in\N}$ is bounded in $W^{1,\infty}(\O;\R^n)$. Moreover, we say that $y_k \wstar y$ in $W^{1,\infty,-\infty}_+(\O;\R^2)$ if the sequence is bounded and $y_k \wstar y$ in $W^{1,\infty}(\O;\R^2)$.
\end{definition}

We would like to stress the fact that $W^{1,\infty,-\infty}_+(\O;\R^2)$ is not a linear function space.

\begin{remark}\label{convergence}

Notice that if $y_k \wstar y$ in $W^{1,\infty,-\infty}_+(\O;\R^2)$, we can give a precise statement on how the inverses of $ \{y_k\}$ converge if the target domain is fixed throughout the sequence; i.e.\ if $y_k: \Omega \to \widetilde{\Omega}$ for all $k \in \mathbb{N}$. This can be achieved for example by fixing Dirichlet boundary data through the sequence.

In such a case it is easy to see that $y_k^{-1} \wstar y^{-1}$ in $W^{1,\infty}(\widetilde{\Omega}, \Omega)$: Since the gradients of the inverses $\nabla y_k^{-1}$ are uniformly bounded by the uniform bi-Lipschitz constants, we may select at least a subsequence converging weakly* in $W^{1,\infty}(\widetilde{\Omega}, \Omega)$ and thus strongly in $L^{\infty}(\widetilde{\Omega}, \Omega)$. Nevertheless, the latter allows us to pass to the limit in the identity $y_k^{-1}(y_k(x)) = x$ for any $x \in \Omega$ and therefore  to identify the weak* limit as $y^{-1}$; in other words, the weak limit is identified independently of the selected subsequence which assures that the whole sequence $ \{y_k^{-1}\}_{k\in \mathbb{N}}$ converges weakly* to $y^{-1}$.
\end{remark}

\bigskip

Let us now summarize the theorems on invertibility, extension from the boundary in the bi-Lipschitz case and on  approximation by smooth functions needed  below.

\begin{theorem}[Taken from \cite{ball81}]\label{jmball}
Let $\O\subset\R^n$ be a bounded  Lipschitz domain. Let $u_0:\overline{\O}\to\R^n$ be continuous in $\overline{\O}$ and one-to-one in $\O$ such that $u_0(\O)$  is also bounded and  Lipschitz. Let $u\in W^{1,p}(\O;\R^n)$ for some $p>n$,  $u(x)=u_0(x)$ for all $x\in\partial\O$, and let $\det\nabla u>0$ a.e.~in $\O$. Finally, assume that  for some $q>n$
\begin{equation}
\int_\O|(\nabla u(x))^{-1}|^q\det\nabla u(x)\,\md x<+\infty\ .
\label{integral-Cond}
\end{equation}
Then $u(\overline{\O})=u_0(\overline{\O})$ and $u$ is a homeomorphism of $\O$ onto $u_0(\O)$. Moreover, the inverse map $u^{-1}\in W^{1,q}(u_0(\O);\R^n)$ and  $\nabla u^{-1}(z)=(\nabla u(x))^{-1}$ for $z=u(x)$ and a.a.~$x\in\O$. 
\end{theorem}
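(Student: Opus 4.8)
\emph{Proof proposal.} Since this statement is quoted verbatim from \cite{ball81}, I only indicate the strategy one would follow; the two workhorses are topological degree theory for Sobolev maps and the area (change-of-variables) formula, both available here because $p>n$. First I would invoke Morrey's embedding $W^{1,p}\hookrightarrow C^{0,1-n/p}$ to replace $u$ by its continuous representative on $\overline\O$, so that the Brouwer degree $\deg(u,\O,\cdot)$ is well defined on $\R^n\setminus u(\partial\O)$, depends only on the boundary trace $u|_{\partial\O}=u_0|_{\partial\O}$, and (since $u\in W^{1,p}$, $p>n$) admits the integral representation $\deg(u,\O,y_0)=\int_\O\varphi(u(x))\det\nabla u(x)\,\md x$ whenever $\varphi\in C_c(\R^n)$ is supported near $y_0\notin u(\partial\O)$ with $\int_{\R^n}\varphi=1$. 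Because $u_0$ is continuous on $\overline\O$ and one-to-one on the open set $\O$, invariance of domain makes it a homeomorphism onto the open set $u_0(\O)$ with $u_0(\partial\O)=\partial u_0(\O)$, whence $\deg(u,\O,\cdot)=\deg(u_0,\O,\cdot)$ equals $1$ on $u_0(\O)$ and $0$ on $\R^n\setminus\overline{u_0(\O)}$.

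Next I would use that $p>n$ forces the Lusin (N) property, so the area formula holds and can be coupled with the integral representation above: for a.e.\ $y_0$ the degree equals the signed number of preimages of $y_0$, which by $\det\nabla u>0$ a.e.\ is simply $N(y_0):=\#(u^{-1}(y_0)\cap\O)$. Comparing with the previous paragraph, $N(y_0)=1$ for a.e.\ $y_0\in u_0(\O)$ and $N(y_0)=0$ for a.e.\ $y_0\notin\overline{u_0(\O)}$; in particular $u$ is injective off a null set and $u(\O)$ agrees with $u_0(\O)$ up to a null set. To upgrade this to genuine injectivity I would exploit the sign condition again: since $\det\nabla u>0$ a.e., every local degree $\deg(u,B,\cdot)$ with $\overline B\subset\O$ is nonnegative, which forces $u$ to be an open map; a straightforward degree argument (nonzero degree implies solvability, plus the fact that the Lipschitz set $u_0(\O)$ is regular open) then yields $u(\O)=u_0(\O)$ and $u(\overline\O)=u_0(\overline\O)$. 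Finally, if $u(a)=u(b)$ for some $a\neq b$ in $\O$, choosing disjoint balls about $a$ and $b$ and using additivity and nonnegativity of the degree would give $\deg(u,\O,y)\ge 2$ near the common value, contradicting $\deg(u,\O,\cdot)\le 1$; hence $u$ is a continuous open bijection of $\O$ onto $u_0(\O)$, i.e.\ a homeomorphism.

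For the regularity of the inverse $v:=u^{-1}$ I would set $G(z):=(\nabla u(v(z)))^{-1}$ and note, via the area formula applied to $u$ together with hypothesis \eqref{integral-Cond}, that $\int_{u_0(\O)}|G(z)|^q\,\md z=\int_\O|(\nabla u(x))^{-1}|^q\det\nabla u(x)\,\md x<+\infty$, so $G\in L^q(u_0(\O);\R^{n\times n})$. It then remains to identify $G$ as the weak gradient of $v$: testing the distributional derivative of $v$ against $\psi\in C_c^1(u_0(\O);\R^n)$ and changing variables $z=u(x)$ reduces the desired identity to an ordinary integration by parts on $\O$, using the chain rule for the $W^{1,p}$ map $u$ and the (N) property of $v$. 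Since $q>n$, this places $v$ in $W^{1,q}(u_0(\O);\R^n)$ with $\nabla v(u(x))=(\nabla u(x))^{-1}$, completing the argument.

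I expect the main obstacle to be the passage in the second paragraph from almost-everywhere injectivity to genuine injectivity and the homeomorphism property: this is precisely where the hypothesis $\det\nabla u>0$ is indispensable (it is what makes all local degrees nonnegative and keeps the degree-counting identity nondegenerate), and it is also where one must be careful with exceptional null sets when combining the area formula with the integral representation of the degree. The Sobolev regularity of $v$, by contrast, is mostly bookkeeping once the change-of-variables machinery is in place, the only subtlety being to justify the integration by parts without assuming a priori that $v$ is weakly differentiable.
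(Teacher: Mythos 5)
This theorem is \emph{not proved in the paper}: it is quoted verbatim from Ball \cite{ball81} (and, as the paper's Remark~2.6 explains, slightly rephrased to have $\Omega$ and $u_0(\Omega)$ be bounded Lipschitz domains, which implies the strong Lipschitz/cone condition of Ball's original statement). So there is no ``paper's own proof'' to compare against; what can be said is whether your sketch captures Ball's argument.

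Your plan uses the right family of tools (Morrey embedding, Brouwer degree with its integral representation for $W^{1,p}$ maps with $p>n$, the area formula, change of variables), and the first two paragraphs correctly identify that the degree equals~$1$ on $u_0(\Omega)$, vanishes off $\overline{u_0(\Omega)}$, and that coupling this with the area formula yields a.e.\ single-valued preimages because $\det\nabla u>0$ a.e.\ makes every multiplicity positive. The difference from Ball is the \emph{order of operations} in the delicate step, and your ordering introduces a gap. Ball does \emph{not} first prove that $u$ is an injective open map by a purely topological argument and then derive regularity of the inverse; instead he constructs the candidate inverse $v$ as an a.e.-defined map via the area formula, proves $v\in W^{1,q}(u_0(\Omega);\R^n)$ directly from the bound~\eqref{integral-Cond} by change of variables, uses $q>n$ to conclude $v$ is continuous, and only then verifies that $v\circ u=\mathrm{id}$ and $u\circ v=\mathrm{id}$ a.e.\ and hence everywhere by continuity. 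This route avoids entirely the step you flag as the main obstacle. Your alternative route --- ``nonnegative local degree forces $u$ to be open'' followed by the two-disjoint-balls degree count --- is where the sketch is genuinely incomplete: nonnegativity of all local degrees does \emph{not} by itself imply openness for a merely continuous map, and the two-ball argument needs $\deg(u,B_a,y_0)\ge 1$ and $\deg(u,B_b,y_0)\ge 1$, which requires strict positivity of the local degree at $a$ and $b$; this is automatic at points of differentiability with positive Jacobian, but not at the (measure-zero, yet possibly present) exceptional points, which is precisely where $a$ or $b$ might sit. Making that step rigorous would require either the openness-and-discreteness theorem for Sobolev maps of nonnegative Jacobian (a substantially deeper result) or, as Ball does, routing through the continuity of the Sobolev inverse --- so the ``mostly bookkeeping'' you relegate to the last paragraph is in fact the load-bearing part of the proof.
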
 

\begin{remark} Let us point out that the original statement of the Theorem~\ref{jmball} requires that $u_0(\O)$ satisfies the so-called cone condition and that $\O$ is strongly Lipschitz. These conditions hold if $\O$ and $u_0(\O)$ are  bounded and Lipschitz domains; cf.~\cite[p.~83-84]{adams-fournier}.  
\end{remark}

\begin{theorem}[Square bi-Lipschitz extension theorem due to \cite{daneri-pratelli-extension} and previously \cite{tukia-ext}]
\label{extensionTheorem}
There exists a geometric constant $C\leq 81\cdot 63600$ such that every $L$ bi-Lipschitz map $u: \partial \mathcal{D}(0,1) \mapsto \mathbb{R}^2$ (with $\mathcal{D}(0,1)$ the unit square) admits a $C L^4$ bi-Lipschitz extension $v: \mathcal{D}(0,1) \mapsto \Gamma$ where $\Gamma$ is the bounded closed set such
that $\partial \Gamma = u(\partial \mathcal{D}(0,1))$.
\end{theorem}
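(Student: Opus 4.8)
The final statement in the excerpt is Theorem~\ref{extensionTheorem}, the square bi-Lipschitz extension theorem, which is quoted from \cite{daneri-pratelli-extension} and \cite{tukia-ext}. Since this is an externally cited result, a "proof" here really means sketching the strategy by which such an extension theorem is established, and noting where the quantitative constant $CL^4$ comes from.

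\medskip

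\noindent\textbf{Proof strategy.} The plan is to reduce the problem to extending a bi-Lipschitz boundary map from the boundary of a square to its interior, and to do this constructively by a combination of a \emph{bi-Lipschitz collar} near the boundary and a \emph{piecewise-affine interpolation} scheme in the bulk. First I would normalize: given the $L$ bi-Lipschitz map $u:\partial\mathcal D(0,1)\to\R^2$, set $\Gamma$ to be the closed Jordan domain bounded by the Jordan curve $u(\partial\mathcal D(0,1))$ (this curve is a Jordan curve precisely because $u$ is injective and continuous, and it is bounded because $u$ is Lipschitz); the Schoenflies-type content is automatic here since $u$ is already bi-Lipschitz on the boundary. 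The key point is that one needs not just \emph{some} homeomorphic extension $v:\mathcal D(0,1)\to\Gamma$ but one whose bi-Lipschitz constant is controlled polynomially in $L$, and the cited works show the optimal-order bound is $L^4$ up to a universal geometric constant $C\le 81\cdot 63600$.

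\medskip

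\noindent\textbf{Key steps.} The construction in \cite{daneri-pratelli-extension} proceeds roughly as follows, and I would follow that outline:
\begin{enumerate}[(i)]
\item \emph{Decompose the square into dyadic-type layers.} Subdivide $\mathcal D(0,1)$ into a family of concentric ``rings'' accumulating at $\partial\mathcal D(0,1)$, together with a fixed central region; each ring is further cut into a controlled number of quadrilaterals. The combinatorial structure is chosen so that adjacent cells have comparable size (bounded geometry), which is what ultimately keeps the bi-Lipschitz constant from blowing up.
\item \emph{Define $v$ on the $1$-skeleton (the grid edges).} Using the values of $u$ on $\partial\mathcal D(0,1)$ and a careful choice of images of the interior grid vertices --- placed so as to respect the geometry of $\Gamma$ --- one defines $v$ affinely along each edge of the grid. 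This is precisely the ``modify/define on a one-dimensional grid first'' idea that the present paper later reuses in its own cut-off construction.
\item \emph{Extend cell by cell.} On each quadrilateral cell of the decomposition, extend the already-defined boundary values to a bi-Lipschitz map of the cell onto the corresponding cell of $\Gamma$; since each cell is a quadrilateral of bounded eccentricity mapped to another such quadrilateral, a piecewise-affine (``four-triangle'' or bilinear-type) extension works with a bi-Lipschitz constant bounded in terms of $L$ and the geometric constants of the decomposition.
\item \emph{Glue and estimate.} The local extensions agree on shared edges by construction, hence assemble into a global homeomorphism $v:\mathcal D(0,1)\to\Gamma$. One then tracks the bi-Lipschitz constant through the composition of the three scales (boundary data, skeleton, cells); a bookkeeping of how $L$ enters at each stage yields the $CL^4$ bound with an explicit universal $C$.
\end{enumerate}

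\medskip

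\noindent\textbf{Main obstacle.} The hard part is step (iii)--(iv): controlling the bi-Lipschitz constant \emph{quantitatively and uniformly} across all cells and at the interfaces between layers. It is easy to produce \emph{a} bi-Lipschitz extension whose constant depends on $u$ in an uncontrolled way (e.g.\ via a generic Schoenflies or Tukia-type argument); the substance of \cite{daneri-pratelli-extension} is the explicit, geometry-respecting construction that makes the constant a fixed power of $L$ times a universal geometric constant. In particular one must ensure that the images of interior grid points are chosen so that no cell degenerates and so that the ratio of the sizes of a cell and its image stays between $c/L^{?}$ and $CL^{?}$; getting the exponent down to $4$ (rather than something larger) is the delicate optimization. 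For the purposes of the present paper, however, only the \emph{existence} of such a polynomial bound is needed, so I would invoke the cited theorem directly and not reproduce this optimization.
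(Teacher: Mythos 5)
The paper does not prove Theorem~\ref{extensionTheorem} at all; it is imported verbatim from \cite{daneri-pratelli-extension} (building on \cite{tukia-ext}) and used as a black box. You correctly recognize this and invoke the citation, which is exactly the paper's ``proof''; your sketch of the layered-grid/skeleton/cell-by-cell construction underlying the Daneri--Pratelli argument is accurate additional background, but nothing more is required here.
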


\begin{remark}[Rescaled squares]
Let us note, that the theorem above holds with the \emph{same geometric constant} $C$ also for rescaled squares
$\mathcal{D}(0, \epsilon)$ with some $\epsilon > 0$, possibly small. Indeed, for $u: \partial \mathcal{D}(0, \epsilon) \mapsto \mathbb{R}^2$, we define the rescaled function $\tilde{u}: \partial \mathcal{D}(0,1) \mapsto \mathbb{R}^2$ through $\tilde{u}(x) = \epsilon u(x/\epsilon)$; note that both functions have the same bi-Lipschitz constant. This function is then extended to obtain $\tilde{v}: \mathcal{D}(0,1) \mapsto \mathbb{R}^2$ as in the above theorem. Again we rescale $\tilde{v}$, under preservation of the bi-Lipschitz constant, to $v: \mathcal{D}(0, \epsilon) \mapsto \mathbb{R}^2$ $v=\frac{1}{\epsilon} \tilde v(\epsilon x)$. So, $v$ is $C L^4$ bi-Lipschitz and, since $\tilde{u}$ coincides with $\tilde{v}$ on the boundary of the unit square, $v$ coincides with $u$ on $ \partial \mathcal{D}(0, \epsilon)$.
\end{remark}

\begin{theorem}[Smooth approximation \cite{iwaniec1} and in the bi-Lipschitz case also by \cite{daneri-pratelli}]
\label{SmoothApprox}
Let $\O\subset\R^2$ be bounded open  and $y\in W^{1,p}(\Omega; \R^2)$ ($1 < p < \infty$)  be an orientation preserving homeomorphism. Then it can be, in the $W^{1,p}$-norm, approximated by diffeomorphisms having the same boundary value as $y$. Moreover, if $y$ is bi-Lipschitz, then there exists a sequence of diffeomorphisms $\{y_k\}$ having the same boundary value as $y$ and $y_k$, $y_k^{-1}$ approximate $y$, $y^{-1}$ in $W^{1,p}$-norm with $1 < p < \infty$, respectively.
\end{theorem}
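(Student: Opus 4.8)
The first assertion is the planar diffeomorphic approximation theorem of \cite{iwaniec1} and its bi-Lipschitz refinement that of \cite{daneri-pratelli}; I only describe the architecture I would follow and indicate where the tools recalled above enter. The plan is to \emph{``discretize, then smooth''}. In the first stage I would produce, for every $\eps>0$, a homeomorphism $\bar y_\eps$ which is affine on a locally finite triangulation of $\O$, coincides with $y$ near $\partial\O$, satisfies $\bar y_\eps\to y$ in $W^{1,p}(\O;\R^2)$, and --- in the bi-Lipschitz case --- is bi-Lipschitz with a constant comparable to $L$; in the second stage I would smooth $\bar y_\eps$ to a diffeomorphism $y_\eps$ in thin tubes around the skeleton of the triangulation, keeping $y_\eps=\bar y_\eps$ near $\partial\O$, $\|y_\eps-\bar y_\eps\|_{W^{1,p}}=o(1)$, and the bi-Lipschitz constant comparable; a diagonal choice $y_k:=y_{\eps_k}$ then gives the sequence claimed. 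The point of the whole scheme is that the discretization \emph{cannot} be done by mollification: as stressed in the introduction, the set $\{A:\det A>0\}$ and, a fortiori, global injectivity are not convex, so $y\ast\phi_\delta$ is in general neither locally nor globally one-to-one, and every modification of $y$ has to be performed without any averaging. This is exactly where the rescaled square bi-Lipschitz extension theorem (Theorem~\ref{extensionTheorem}, see also \cite{daneri-pratelli-extension,tukia-ext}) is indispensable: one fixes a fine grid of squares on $\O$, leaves $y$ untouched on the cells meeting $\partial\O$ (which freezes the boundary datum), prescribes on the one-dimensional grid a piecewise-affine modification of the trace of $y$ that is still bi-Lipschitz with comparable constant and $W^{1,p}$-negligibly far from $y$, and fills each small square by a $CL^4$ bi-Lipschitz extension of that boundary curve onto the Jordan domain it bounds; since the set on which $y$ is actually changed can be made of vanishing measure, while the new gradients stay bounded by $\sim CL^4$ and the old one by $L$, $W^{1,p}$-convergence follows.

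The smoothing stage uses the rank-one structure of a piecewise-affine homeomorphism: across a common edge of two adjacent triangles the affine gradients $A_1,A_2$ agree along the edge, so $A_2-A_1=c\otimes\nu$ with $\nu$ normal to the edge and $t\mapsto\det\bigl(A_1+t(A_2-A_1)\bigr)$ is affine in $t$, hence of constant (positive) sign on $[0,1]$; with the further care of \cite{daneri-pratelli} this lets one replace $\bar y_\eps$ in tubes of vanishing measure around edges and vertices by a smooth map that is still injective and uniformly bi-Lipschitz, at the cost of an $o(1)$ change of the $W^{1,p}$-norm.

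The statement on the inverses (bi-Lipschitz case) follows from the previous one. The $y_k$ may be taken with the boundary datum of $y$, hence mapping onto the fixed domain $\widetilde\O=y(\O)$, and are uniformly bi-Lipschitz; thus $\{y_k^{-1}\}$ is uniformly Lipschitz, a subsequence converges uniformly, and passing to the limit in $y_k^{-1}(y_k(x))=x$ identifies the limit as $y^{-1}$, so the whole sequence satisfies $y_k^{-1}\to y^{-1}$ uniformly (cf.\ Remark~\ref{convergence}). Then $\nabla y_k^{-1}$ is bounded in $L^p(\widetilde\O;\R^{2\times 2})$ and, testing the change of variables $z=y_k(x)$ against continuous fields, $\nabla y_k^{-1}\wto\nabla y^{-1}$ weakly in $L^p(\widetilde\O;\R^{2\times 2})$; while the same change of variables gives $\int_{\widetilde\O}|\nabla y_k^{-1}(z)|^p\,\md z=\int_\O|(\nabla y_k(x))^{-1}|^p\det\nabla y_k(x)\,\md x$, and, since $\nabla y_k\to\nabla y$ in $L^p(\O)$ with $\nabla y_k$ and $\nabla y$ confined to a fixed compact set of invertible matrices --- on which inversion and $\det$ act Lipschitz-continuously --- the right-hand side converges to $\int_\O|(\nabla y(x))^{-1}|^p\det\nabla y(x)\,\md x=\int_{\widetilde\O}|\nabla y^{-1}(z)|^p\,\md z$. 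Hence $\|\nabla y_k^{-1}\|_{L^p(\widetilde\O)}\to\|\nabla y^{-1}\|_{L^p(\widetilde\O)}$, and by uniform convexity of $L^p$, $1<p<\infty$, the weak convergence improves to convergence in norm, i.e.\ $y_k^{-1}\to y^{-1}$ in $W^{1,p}(\widetilde\O;\R^2)$.

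The genuinely hard part is the first stage together with the edge/vertex smoothing: one must keep the approximants injective, $W^{1,p}$-close to $y$, equal to $y$ on $\partial\O$, \emph{and} uniformly bi-Lipschitz, all at once, and the non-convexity of the positive-determinant constraint rules out every standard smoothing device --- which is precisely what makes the extension-theoretic constructions of \cite{iwaniec1,daneri-pratelli} unavoidable. The reduction to a grid of squares, the change of variables, and the uniform-convexity argument for the inverses are, by contrast, routine.
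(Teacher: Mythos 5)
This is a cited, imported result (\cite{iwaniec1} for general orientation-preserving Sobolev homeomorphisms, \cite{daneri-pratelli} for the bi-Lipschitz refinement with convergence of inverses); the paper contains no proof of Theorem~\ref{SmoothApprox}, so there is nothing internal to compare against. Your review should therefore be read as an assessment of the sketch in its own right, against the architecture of the cited works.

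The ``discretize, then smooth'' outline is consistent with the Daneri--Pratelli strategy and correctly identifies the two decisive ingredients: the square bi-Lipschitz extension theorem as the substitute for mollification (whose failure on the non-convex set $\{\det>0\}$ you rightly stress), and the rank-one/positivity-of-determinant structure across edges of a piecewise-affine homeomorphism that makes the local smoothing safe. One caveat: this skeleton is really the \emph{bi-Lipschitz} route; the non-bi-Lipschitz first assertion in \cite{iwaniec1} is not obtained by filling a grid with $CL^4$-bi-Lipschitz extensions but by different (more analytic) means, so your sketch, taken literally, proves less than the first assertion claims. Since you present it as architecture rather than proof, and the result is cited anyway, this is not a gap, but worth flagging.

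What is genuinely interesting and worth recording is your derivation of the $W^{1,p}$-convergence of the inverses from the $W^{1,p}$-convergence of $y_k$, the uniform bi-Lipschitz bound, and the common boundary datum. The chain is correct: uniform convergence of $y_k^{-1}$ by passing to the limit in $y_k^{-1}\circ y_k=\mathrm{id}$ (exactly Remark~\ref{convergence}); $\nabla y_k^{-1}\wto\nabla y^{-1}$ weakly in $L^p(\widetilde\O)$; norm convergence from the change-of-variables identity
\begin{equation*}
\int_{\widetilde\O}\bigl|\nabla y_k^{-1}(z)\bigr|^p\,\md z=\int_\O\bigl|(\nabla y_k(x))^{-1}\bigr|^p\det\nabla y_k(x)\,\md x
\end{equation*}
together with $\nabla y_k\to\nabla y$ in $L^p(\O)$, the confinement of all gradients to a fixed $\Rrho$, and the continuity of $A\mapsto|A^{-1}|^p\det A$ on that compact set (so the right-hand side converges by convergence in measure plus uniform boundedness); finally, weak convergence plus convergence of norms in the uniformly convex space $L^p$, $1<p<\infty$, upgrades to strong convergence by Radon--Riesz. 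This shows that, once a uniformly bi-Lipschitz approximating sequence with the right boundary values is constructed, the $W^{1,p}$-approximation of the inverse comes for free. It relies, of course, on the \emph{uniform} bi-Lipschitz control of the approximants, a feature which Theorem~\ref{SmoothApprox} as stated does not make explicit but which is part of the Daneri--Pratelli package and which you correctly invoke.

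In short: no gap, a correct and somewhat more economical treatment of the inverses than the cited paper's simultaneous construction, and a reasonable high-level account of the bi-Lipschitz branch; just be aware the sketch does not cover the non-bi-Lipschitz first assertion.
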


\subsection{Young measures}
We denote by ``$\rca(S)$'' the set of Radon measures on a set $S$. Young measures on a bounded  domain $\O\subset\Rn$ are weakly* measurable mappings
$x\mapsto\nu_x:\O\to \rca(\R^{n\times n})$ with values in probability measures; the adjective ``weakly* measurable'' means that,
for any $v\in C_0(\R^{n\times n})$, the mapping
$\O\to\R:x\mapsto\A{\nu_x,v}=\int_{\R^{n\times n}} v(s)\nu_x(\d s)$ is
measurable in the usual sense. Let us remind that, by the Riesz theorem,
$\rca(\R^{n\times n})$, normed by the total variation, is a Banach space which is
isometrically isomorphic with $C_0(\R^{n\times n})^*$, where $C_0(\R^{n\times n})$ stands for
the space of all continuous functions $\R^{n\times n}\to\R$ vanishing at infinity.
Let us denote the set of all Young measures by ${\cal Y}(\O;\R^{n\times n})$. It
is known (see e.g.~\cite{r}) that ${\cal Y}(\O;\R^{n\times n})$ is a convex subset of $L^\infty_{\rm
w}(\O;\rca(\R^{n\times n}))\cong L^1(\O;C_0(\R^{n\times n}))^*$, where the subscript ``w''
indicates the aforementioned property of weak* measurability  Let $S\subset\R^{n\times n}$ be a compact set. A classical result
\cite{tartar} states that for every sequence $\{Y_k\}_{k\in\N}$
bounded in $L^\infty(\O;\R^{n\times n})$ such that $Y_k(x)\in S$ there exists a subsequence (denoted by
the same indices for notational simplicity) and a Young measure
$\nu=\{\nu_x\}_{x\in\O}\in{\cal Y}(\O;\R^{n\times n})$ satisfying
\be\label{jedna2}
\forall v\in C(S):\ \ \ \ \lim_{k\to\infty}v(Y_k)=\int_{\R^{n\times n}}v(s)\nu_x(\d s)\ \ \ \ \
\ \ \mbox{ weakly* in }L^\infty(\O)\ .
\ee
Moreover, $\nu_x$ is supported on $\bar S$  for almost all $x\in\O$.  On the other hand, if $\mu=\{\mu_x\}_{x\in\O}$, $\mu_x$ is supported on $S$ for almost all $x\in\O$ and $x\mapsto\mu_x$ is weakly* measurable then there exist a sequence  $\{Z_k\}_{k\in\N}\subset L^\infty(\O;\R^{n\times n})$, $Z_k(x)\in S$  and \eqref{jedna2} holds with $\mu$ and $Z_k$ instead of $\nu$ and $Y_k$, respectively.

Let us denote by ${\cal Y}^\infty(\O;\R^{n\times n})$ the set of all Young measures which are created in this way, i.e.\ by taking all bounded sequences in $L^\infty(\O;\R^{n\times n})$. Moreover, we denote by ${\cal GY}^\infty(\O;\R^{n\times n})$ the subset of ${\cal Y}^\infty(\O;\R^{n\times n})$ consisting of measures generated by gradients of $\{y_k\}_{k\in\N}\subset W^{1,\infty}(\O;\R^n)$, i.e.\  $Y_k=\nabla y_k$ in \eqref{jedna2}. The following result  is due to Kinderlehrer and Pedregal \cite{k-p1,k-p} (see also \cite{mueller,pedregal}):  

\begin{theorem}[adapted from \cite{k-p1,k-p}]
Let $\Omega$ be a bounded Lipschitz domain. Then the parametrized measure $\nu \in {\cal Y}^\infty(\O;\R^{n\times n})$ is in ${\cal GY}^\infty(\O;\R^{n\times n})$ if and only if
\begin{enumerate}
  \item  there exists $z \in W^{1,\infty}(\Omega; \R^n)$ such that $\nabla z(x) = \int_{\Rm} A \nu_x(\d A)$  for a.e. \ $x \in \Omega$,
  \item $\psi(\nabla z(x)) \leq \int_{\Rm}\psi(A) \nu_x(\d A) $ for a.e. \ $x \in \Omega$ and for all $\psi$ quasiconvex, continuous and bounded from below,
  \item $\mathrm{supp} \ \nu_x \subset K$ for some compact set $K \subset \Rm$ for a.e. \ $x \in \Omega$.
\end{enumerate}
\end{theorem}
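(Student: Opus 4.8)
The plan is to follow the classical argument of Kinderlehrer and Pedregal. \emph{Necessity} is the easy direction: suppose $\nu$ is generated by $\{\nabla y_k\}$ with $y_k\wstar z$ in $W^{1,\infty}(\O;\R^n)$. The uniform bound $\|\nabla y_k\|_{L^\infty}\le R$ confines all gradients to a fixed ball, so (3) holds with $K=\overline{B(0,R)}$; moreover, testing \eqref{jedna2} with a compactly supported modification of the coordinate projections $A\mapsto A_{ij}$ gives $\nabla y_k\wstar G$ in $L^\infty(\O)$ with $G(x)=\int_{\Rm}A\,\nu_x(\d A)$, and necessarily $G=\nabla z$, which is (1). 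For (2), let $\psi$ be quasiconvex, continuous and bounded below; it is bounded on $\overline{B(0,R)}$, hence $\psi(\nabla y_k)\wstar\bar\psi$ in $L^\infty(\O)$ with $\bar\psi(x)=\int_{\Rm}\psi(A)\,\nu_x(\d A)$, and Morrey's weak$^*$ lower semicontinuity theorem applied on every subdomain $\omega\Subset\O$ gives $\int_\omega\psi(\nabla z)\,\d x\le\int_\omega\bar\psi\,\d x$; letting $\omega$ shrink to Lebesgue points yields the pointwise Jensen inequality (2).

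\emph{Sufficiency} is the substance, and I would treat the homogeneous case first: $\nu_x\equiv\mu$ on a cube $Q$, with barycenter $\bar A=\int_{\Rm}A\,\mu(\d A)$, compact support and property (2). Let $\mathcal A$ be the set of homogeneous gradient Young measures with barycenter $\bar A$ that are generated by sequences of the form $\bar A\,x+\varphi_k$ with $\varphi_k\in W^{1,\infty}_0(Q;\R^n)$ and gradients in a fixed ball. The classical cut-off lemma---given a generating sequence $\{\nabla y_k\}$ for a homogeneous gradient Young measure with barycenter $\bar A$, the convex interpolants $\lambda_k y_k+(1-\lambda_k)\bar A\,x$, with cut-off functions $\lambda_k$ supported in boundary layers that shrink slowly enough compared with $\|y_k-\bar A\,x\|_{L^\infty}$, form a generating sequence with affine datum $\bar A\,x$ and the same Young measure---shows that $\mathcal A$ is a convex, weak$^*$ closed set of Radon measures on that ball, while a periodization argument identifies $\inf_{\sigma\in\mathcal A}\langle\sigma,g\rangle$ with $Qg(\bar A)$ for every continuous $g$, where $Qg$ denotes the quasiconvex envelope. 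If $\mu\notin\mathcal A$, Hahn--Banach separation produces $g$ with $\langle\mu,g\rangle<Qg(\bar A)$; but $Qg$ is quasiconvex, continuous and bounded below, so (2) gives $\langle\mu,g\rangle\ge\langle\mu,Qg\rangle\ge Qg(\bar A)$, a contradiction. Hence $\mu\in\mathcal A$, so $\mu$ is a gradient Young measure, and with affine boundary datum.

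For a general $\nu$ satisfying (1)--(3) I would approximate and glue. By Lusin's theorem $x\mapsto\nu_x$ and $x\mapsto\nabla z(x)$ are continuous off a set of arbitrarily small measure. Approximate $z$ by continuous piecewise affine maps $z_j\to z$ (uniformly, with $\{\nabla z_j\}$ bounded) on finer and finer triangulations; on each cell $P$ with $z_j\equiv A_P\,x+b_P$, let $\mu_P^j$ be the push-forward of $\nu_{x_P}$ (for a well-chosen $x_P\in P$) under the translation $A\mapsto A+(A_P-\nabla z(x_P))$, so that its barycenter is exactly $A_P$. By the homogeneous case $\mu_P^j$ is generated by $\{A_P+\nabla\varphi_{k,P}\}$ with $\varphi_{k,P}\in W^{1,\infty}_0(P;\R^n)$; the glued maps $y_k^j:=z_j+\varphi_{k,P}$ on $P$ are continuous, uniformly bounded in $W^{1,\infty}$, converge weak$^*$ to $z_j$ as $k\to\infty$, and generate the piecewise-homogeneous Young measure equal to $\mu_P^j$ on $P$. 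This measure converges weakly$^*$ to $\nu$ as $j\to\infty$ (the cells shrink, $\mu_P^j\approx\nu_{x_P}\approx\nu_x$ on $P$, and $z_j\to z$); since the set of gradient Young measures with uniformly bounded supports is weak$^*$ closed, a diagonal extraction over $j$ and $k$---using metrizability of the relevant weak$^*$ topology on bounded sets---produces a single sequence in $W^{1,\infty}(\O;\R^n)$ converging weak$^*$ to $z$ and generating $\nu$.

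The main obstacle is the homogeneous case: the cut-off lemma that licenses the reduction to affine boundary data, together with the identification of $\inf_{\mathcal A}\langle\cdot,g\rangle$ with $Qg(\bar A)$, since it is exactly this pairing that makes Hahn--Banach bite against quasiconvexity. A subsidiary technical point is keeping the gradients of all auxiliary sequences in one fixed compact set---so that (3) is genuinely preserved and the separation takes place in Radon measures on a compact set---handled by a truncation argument in the spirit of Zhang's lemma. In the present unconstrained $W^{1,\infty}$ setting, with no restriction on $\det\nabla y_k$ or on injectivity, each of these ingredients (in particular the convex-averaging cut-off) is classical; they break down precisely when such constraints are imposed, which is what the rest of this paper addresses.
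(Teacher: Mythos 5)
The paper does not prove this theorem: it is stated as an adapted citation of Kinderlehrer and Pedregal \cite{k-p1,k-p} and used only as background, so there is no internal proof against which to check you. Your outline is a correct reconstruction of the classical Kinderlehrer--Pedregal argument — necessity via Morrey's lower semicontinuity on shrinking subdomains and Lebesgue points; sufficiency via the homogeneous case (convexity and weak$^*$ closedness of the set $\mathcal A$ after a convex-averaging cut-off, identification of $\inf_{\mathcal A}\langle\cdot,g\rangle$ with $Qg(\bar A)$ by periodization, Hahn--Banach separation), followed by a general-to-homogeneous reduction and diagonal extraction, with Zhang's lemma controlling the supports.

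It is worth pointing out how this skeleton aligns with the paper's proof of its own Theorem~\ref{THM1}. Your cut-off, periodization and Hahn--Banach steps correspond to Lemma~\ref{convexity}, Lemma~\ref{homogenization} and Proposition~\ref{homocase}; your gluing of local homogeneous measures corresponds to Lemma~\ref{auxiliary} and the assembly in Section~\ref{Proofs}. The two places where you deviate from the paper's route (for the unconstrained version both choices are standard and correct): for necessity the paper blows up at Lebesgue points (Lemma~\ref{localization}) and feeds the blow-up into the definition of quasiconvexity rather than quoting Morrey's theorem; for the general case the paper covers $\Omega$ by rescaled copies of itself around Lebesgue points of $\nabla u$ (a Vitali-type argument, Lemma~\ref{auxiliary}) and plants localized generators directly, while you pass through a piecewise-affine approximation of $z$ and translate each $\nu_{x_P}$ to match the local affine slope. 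The paper's choices are not arbitrary: the blow-up route and the covering-by-rescaled-copies both survive the orientation and invertibility constraints, whereas Morrey's theorem and the convex-averaging cut-off $\lambda_k y_k + (1-\lambda_k)\bar A x$ do not. You correctly flag at the end that the convex cut-off is precisely the ingredient that fails under those constraints; its bi-Lipschitz replacement via Daneri--Pratelli extension is the content of Section~\ref{sect-cutOff} (Theorem~\ref{cut-off} and Corollary~\ref{corollary-cut-off}) and is the actual novelty of the paper.
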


\bigskip

%
%
%
\section{Main results}
\label{results}
 
We shall denote, for $\varrho \geq 1$, 
\begin{align*}
\Gradrho = \big\{ \nu &\in \mathcal{Y}^\infty(\Omega; \R^{2 \times 2}) \\ &\text { that are generated by $\varrho$-bi-Lipschitz, orientation preserving maps} \big\},
\end{align*}
and 
$$
\GradPlus = \bigcup_{\varrho \geq 1} \Gradrho.
$$
As already pointed out in the introduction we seek for an explicit characterization of $\GradPlus$; it can be expected that, 
when compared to \cite{k-p1}, we shall restrict the support of the Young measure as in \cite{quasiregular,bbmkgpYm,krw} but also alter the Jensen inequality by changing the notion of quasiconvexity.

\begin{definition}
\label{biqc-Def}
Suppose $v:\R^{2\times 2}\to\R\cup\{+\infty\}$ is bounded from below and  Borel measurable. Then we denote  
$$
Z v(A)=\inf_{\varphi\in W^{1,\infty, -\infty}_{A}(\O;\R^2)}|\O|^{-1}\int_\O v(\nabla \varphi(x))\,\md x\ ,
$$
with
$$
W^{1,\infty,-\infty}_{A} (\O;\R^2) = \begin{cases}
\Big\{y \in W^{1,\infty, -\infty}_+(\O;\R^2); y(x)=Ax \text{ if  $x\in\partial \Omega$} \Big\} & \text{if $\det\,A > 0$,} \\
\emptyset &\text{else.}
\end{cases}
$$
and say that $v$ is bi-quasiconvex on $\R^{2 \times 2}_\mathrm{inv+}$ if $Zv(A) = v(A)$ for all $A \in \R^{2 \times 2}_\mathrm{inv+}$. 
Here
we set $\inf \emptyset=+\infty$.
\end{definition}

\begin{remark} \mbox{}
\label{biQCProp}
\begin{enumerate}
\item  Notice that actually $Zv(A)\le v(A)$ if det $A>0$  and $ Zv(A) = +\infty$ otherwise, so that  $Zv\not\le v$ in general. Moreover, the infimum in the definition of $Zv(A)$ is, generically, not attained.\item  Any $v$ as in Definition \ref{biqc-Def} bi-quasiconvex  if and only if
\be\label{bi-def}
|\O|v(A)\le\int_\O v(\nabla\varphi(x))\,\md x\ 
\ee 
for all $\varphi \in W^{1,\infty,-\infty}_+(\Omega;\mathbb{R}^2)$, $\varphi = Ax$ on $\partial \Omega$ and all $A\in\R^{2 \times 2}_\mathrm{inv+}$.  Indeed, clearly if v is bi-quasiconvex then \eqref{bi-def} holds. On the other hand, if \eqref{bi-def} holds, we have that $v(A)\le Zv(A)$ for $A\in \R^{2 \times 2}_\mathrm{inv+}$ by taking the infimum in \eqref{bi-def}. Moreover,  $Zv(A)\le v(A)$ for such $A$, so that $Zv(A)=v(A)$. 
\item We recall that the condition of bi-quasiconvexity is less restrictive than the usual quasiconvexity and there obviously exist bi-quasiconvex functions on $\R^{2\times 2}$ which are not quasiconvex (for example, take $v:\R\to\R$ with $v(0)=1$ and $v(A)=0$ if $A\ne 0$.). Also, we can allow for the growth \eqref{blowup}.
\item  It is interesting to investigate whether, for any $v$ as from Definition \ref{biqc-Def}, $Zv(A)$ is already a bi-quasiconvex function. If one wants to follow the standard approach known from the analysis of classical quasiconvex function \cite{dacorogna}, this consists in showing that $Zv$ can be actually replaced by $Z'v$ defined through  
$$
Z' v(A)=\inf_{\varphi\in W^{1,\infty, -\infty}_{A}(\O;\R^2)\text{ piecewise affine }}|\O|^{-1}\int_\O v(\nabla \varphi(x))\,\md x\ ,
$$
and that the latter is bi-quasiconvex. To do so, one relies on the density of piecewise affine function which, in our case, is available through Theorem \ref{SmoothApprox}. Moreover, to employ the density argument, one needs to show that $Z'v$ is rank-1 convex on $\R^{2 \times 2}_\mathrm{inv+}$ and hence continuous. This is done by constructing a sequence of faster and faster oscillating laminates that are altered near the boundary to meet the boundary condition. Now, since an appropriate cut-off technique becomes available through this work, it seems that this approach should be feasible. Nevertheless, the details are beyond the scope of the present paper and we leave them for future work.

Let us remark that an alternative to the above methods may be possible along the lines of the recent work \cite{Conti}.
\end{enumerate}
\end{remark}

 The main result of our paper is the following characterization theorem.
 \begin{theorem}\label{THM1}
Let $\Omega \subset \R^2$ be a bounded Lipschitz domain. Let $\nu\in \mathcal{Y}^\infty(\O;\R^{2\times 2})$. Then  $\nu \in \GradPlus$ if and only if the following three conditions hold:
 \begin{gather}
 \exists \varrho \geq 1 \text{ s.t. }\, \mathrm{supp}\, \nu_x\subset \RrhoPlus \mbox{ for a.a.~$x\in\O$}\ , \label{supp} \\
    \exists\ u\in W^{1,\infty,-\infty}_+(\O;\R^2)\ :\  \nabla u(x)=\int_{\regplus} A \nu_x(\d A)\  \ ,\label{firstmoment0} \\
    \intertext{$\exists \bar{c}(\varrho) > \varrho$ such that for a.a.~$x\in\O$, all  $\tilde\varrho\in[\bar{c}(\varrho);+\infty]$, and all  $v\in \mathcal{O}(\tilde\varrho)$  the following  inequality is valid}
    Z v(\nabla u(x))\le\int_{\regplus} v(A)\nu_x(\md A)\ , \label{qc0}
 \end{gather}
with
\begin{equation}
\mathcal{O}(\varrho)=\{ v:\R^{2\times 2}\to\R\cup\{+\infty\};\ v\in C(\Rrho)\ ,\ v(A)=+\infty \mbox { if $A\in\R^{2\times 2}\setminus\RrhoPlus$}\}\ .
\label{Orho}
\end{equation}

\end{theorem}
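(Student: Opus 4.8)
The plan is to prove the two implications separately, with the ``only if'' direction (necessity of the three conditions) being the easier one and the ``if'' direction (sufficiency) carrying the main technical weight.

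For the \emph{necessity} part, suppose $\nu$ is generated by a sequence $\{y_k\}$ of $\varrho$-bi-Lipschitz orientation-preserving maps. Since $\nabla y_k(x) \in \RrhoPlus$ a.e., the Young measure fundamental theorem (as recalled around \eqref{jedna2}) immediately gives $\mathrm{supp}\,\nu_x \subset \overline{\RrhoPlus} = \RrhoPlus$, which is \eqref{supp}. The first moment condition \eqref{firstmoment0} follows because, up to a subsequence, $y_k \wstar u$ in $W^{1,\infty,-\infty}_+(\O;\R^2)$ (the bi-Lipschitz constants are uniformly bounded, so the limit is again bi-Lipschitz and orientation-preserving — here one uses that $\det$ is a null Lagrangian, so $\det\nabla u = \lim \det\nabla y_k > 0$ in the appropriate weak sense, and bi-Lipschitz bounds pass to the limit), and $\nabla u = \wstar\text{-}\lim \nabla y_k = \int A\,\nu_x(\d A)$. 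The Jensen-type inequality \eqref{qc0} is where the new cut-off technique enters: for $v \in \mathcal{O}(\tilde\varrho)$ with $\tilde\varrho$ large enough (this is the role of $\bar c(\varrho)$ — the bi-Lipschitz extension of Theorem~\ref{extensionTheorem} inflates the constant by a factor $CL^4$, so the modified sequence near the boundary has a worse, but controlled, bi-Lipschitz constant), one localizes at a Lebesgue point $x_0$ of $x \mapsto \int v\,\nu_x(\d A)$ and of $\nabla u$, blows up, modifies the rescaled generating sequence near $\partial\O$ so it equals the affine map $\nabla u(x_0)\,x$ on the boundary while changing $\nabla y_k$ only on a set of vanishingly small measure, and obtains a competitor for $Zv(\nabla u(x_0))$; passing to the limit yields $Zv(\nabla u(x_0)) \le \int v\,\nu_{x_0}(\d A)$. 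The technical point is that modifying on a small set near the boundary costs only $O(\text{meas})$ in the energy because $v$ is bounded on $\RrhoPlus$ (finite continuous on the compact $R_{\tilde\varrho}^{2\times 2}$) — this is exactly why we may not take $\tilde\varrho = \varrho$ but need $\tilde\varrho \ge \bar c(\varrho)$.

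For the \emph{sufficiency} part, assume $\nu$ satisfies \eqref{supp}--\eqref{qc0}. The strategy, following Kinderlehrer--Pedregal, is: (i) first handle the \emph{homogeneous} case, where $\nu_x \equiv \nu_0$ is independent of $x$ and $\nabla u(x) = A_0 x$ with $A_0 = \int A\,\nu_0(\d A) \in \RrhoPlus$; (ii) then localize and patch. For (i), by the Young measure representation one picks $Z_k \in L^\infty(\O;\RrhoPlus)$ with $Z_k \to \nu_0$; the difficulty is that $Z_k$ need not be a gradient, let alone a gradient of a bi-Lipschitz map. One uses a version of the Kinderlehrer--Pedregal argument — a Hahn--Banach / convexity argument in the space of homogeneous gradient Young measures — to show that $\nu_0$ lies in the (weak*) closure of laminates supported in $\RrhoPlus$; the key input making the closure the ``right'' one is precisely the Jensen inequality \eqref{qc0} against \emph{all} $v \in \mathcal{O}(\tilde\varrho)$, i.e.\ $\nu_0$ cannot be separated from the generated measures by any such $v$. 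Then each laminate is realized by an explicit bi-Lipschitz construction (building blocks oscillating between the two matrices on a rank-one-compatible pair inside $\RrhoPlus$, glued by the bi-Lipschitz extension theorem in small squares), which gives a genuine bi-Lipschitz orientation-preserving generating sequence for $A_0 x$. For (ii), one covers $\O$ up to a small set by small squares $\mathcal{D}(x_i,\eps)$, on each of which $\nabla u$ and $\nu_x$ are nearly constant; on each square one inserts the (rescaled) homogeneous construction for the average $\nabla u(x_i)$, and then — crucially — one uses the cut-off technique of Section~\ref{sect-cutOff}, i.e.\ one first modifies the map on a one-dimensional grid near $\partial\mathcal{D}(x_i,\eps)$ to match the trace of $u$ (or of the next building block) and then applies Theorem~\ref{extensionTheorem} to fill in the thin boundary layer with a bi-Lipschitz map. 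A diagonal argument in $\eps$, $k$, and the laminate-approximation parameter finishes the proof, with the bi-Lipschitz constant of the diagonal sequence bounded in terms of $\varrho$, $\tilde\varrho$ and the geometric constant $C$.

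The main obstacle is the boundary-layer construction in step (ii): because the admissibility constraint ``orientation-preserving bi-Lipschitz homeomorphism'' is genuinely nonconvex and nonlocal (global injectivity), one cannot use a mollifier or a convex combination to interpolate between the oscillating interior map and the prescribed affine/trace boundary data, as is done in the classical proofs. The resolution is to interpolate only on a one-dimensional grid (where injectivity is easy to control directly) and then invoke the bi-Lipschitz extension theorem to get a two-dimensional bi-Lipschitz filling — this is the conceptual novelty, and verifying that the grid modification plus extension (a) has uniformly controlled bi-Lipschitz constant, (b) stays orientation-preserving, (c) changes the gradient only on a set of arbitrarily small measure, and (d) is compatible with taking all the limits simultaneously, is the heart of the argument. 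Theorem~\ref{jmball} is used as a safety net to guarantee that limits of the constructed sequences are genuine homeomorphisms with Sobolev inverses, and Theorem~\ref{SmoothApprox} allows one to reduce, when convenient, to piecewise affine or diffeomorphic building blocks.
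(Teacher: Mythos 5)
Your treatment of the necessity direction, the localization/blow-up to homogeneous measures, and the final patching step via a Vitali covering combined with the grid cut-off and the bi-Lipschitz extension theorem all match the paper's argument, including the correct identification of where the constant $\bar c(\varrho)$ comes from. The genuine gap is in the homogeneous case of the sufficiency direction.

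You propose to run Hahn--Banach separation against the set of \emph{laminates} supported in $\RrhoPlus$ with barycenter $A$, and then to realize each laminate by an explicit bi-Lipschitz construction. Neither step closes. The paper instead separates from the set $\mathcal{M}^\varrho_A$ of averaged Dirac measures $\overline{\delta_{\nabla y}}$ with $y\in\mathcal{U}^\varrho_A$, i.e.\ arising from arbitrary $\varrho$-bi-Lipschitz orientation-preserving maps with boundary value $Ax$. This choice is not cosmetic: by definition $Zv(A)=\inf_{\mu\in\mathcal{M}^\varrho_A}\langle\mu,v\rangle$, so a separating functional $\tilde v$ with $\langle\mu,\tilde v\rangle\ge 0$ on $\mathcal{M}^\varrho_A$ and $\langle\nu_0,\tilde v\rangle<0$ forces $Zv(A)\ge 0$, which contradicts the Jensen inequality \eqref{qc0} giving $Zv(A)\le\langle\nu_0,v\rangle<0$. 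If instead you separate from laminates, you only learn that laminar competitors give nonnegative values of $\tilde v$; this says nothing about $Zv(A)$, which is an infimum over \emph{all} bi-Lipschitz competitors, so the contradiction does not arise. In addition, Hahn--Banach needs the set you separate from to be convex: the paper establishes the (nontrivial) convexity of $\mathcal{M}^\varrho_A$ in Lemma~\ref{convexity} by a Vitali covering of $\Omega$ together with Theorem~\ref{jmball} to recover global injectivity of the patched competitor, whereas the set of laminates with fixed barycenter has no such obvious convexity. Finally, once the correct set $\mathcal{M}^\varrho_A$ is used, your extra step ``realize each laminate by an explicit bi-Lipschitz construction'' is not needed at all: membership in $\overline{\mathcal{M}^\varrho_A}$, combined with metrizability of the weak$^*$ topology on bounded subsets of $\rca(\R^{2\times 2})$, directly yields a generating sequence in $\mathcal{U}^\varrho_A$, which is then fed into the homogenization Lemma~\ref{homogenization}.
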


An easy corollary is the following:
\begin{corollary}\label{wlsc}
Let $\Omega \subset \R^2$ be a bounded Lipschitz domain. Let $v$ be in $\mathcal{O}(+\infty)$. Let $\{y_k\}_{k \in \N} \subset W^{1,\infty,-\infty}_+(\Omega;\mathbb{R}^2)$  and   suppose that $y_k \wstar y$ in $W^{1,\infty,-\infty}_+(\Omega;\mathbb{R}^2)$. Then $v$ is bi-quasiconvex if and only if $y\mapsto I(y)=\int_\O v(\nabla y(x))\,\d x$  is sequentially weakly* lower semicontinuous with respect to the convergence above.
\end{corollary}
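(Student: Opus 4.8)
\textbf{Proof proposal for Corollary \ref{wlsc}.}

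The plan is to deduce both implications from the characterization Theorem \ref{THM1}, using the Young measure generated (along a subsequence) by $\{\nabla y_k\}$. First I would fix the setup: since $y_k \wstar y$ in $W^{1,\infty,-\infty}_+(\O;\R^2)$, the sequence is bounded, so the bi-Lipschitz constants are uniformly bounded by some $\varrho \geq 1$, hence $\nabla y_k(x) \in \RrhoPlus$ for a.e.\ $x$ and all $k$. By the fundamental theorem on Young measures (the compact-support version quoted in Section \ref{Start}) and a standard diagonal/subsequence argument, any subsequence of $\{\nabla y_k\}$ has a further subsequence generating a Young measure $\nu \in \Gradrho \subset \GradPlus$, with $\mathrm{supp}\,\nu_x \subset \RrhoPlus$ and first moment $\int A\,\nu_x(\d A) = \nabla y(x)$ a.e.\ (the first moment is identified as $\nabla y$ because $\nabla y_k \wstar \nabla y$ in $L^\infty$, and weak* limit of $\nabla y_k$ equals the barycenter of the generated measure).

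For the ``if'' direction (bi-quasiconvexity $\Rightarrow$ weak* lower semicontinuity): I would argue by contradiction. Suppose $\liminf_k I(y_k) < I(y)$ along some subsequence; pass to a further subsequence realizing the liminf as a limit and generating a Young measure $\nu$ as above. Since $v \in \mathcal{O}(+\infty)$ and $\mathrm{supp}\,\nu_x \subset \RrhoPlus$ with all $\nabla y_k$ valued in the compact set $\RrhoPlus$, on which $v$ restricted is continuous, the lower semicontinuity / representation property of Young measures gives
\[
\lim_k I(y_k) = \lim_k \int_\O v(\nabla y_k(x))\,\d x = \int_\O \int_{\regplus} v(A)\,\nu_x(\d A)\,\d x.
\]
(Here one uses that $v$ is continuous and bounded on the compact set $\RrhoPlus$ which contains the supports; the $+\infty$ values of $v$ off $\RrhoPlus$ are never seen.) Now $v$ bi-quasiconvex means $Zv = v$ on $\R^{2\times 2}_{\mathrm{inv}+}$, and $v \in \mathcal{O}(\varrho')$ for every $\varrho' \geq \varrho$; in particular $v \in \mathcal{O}(\tilde\varrho)$ for $\tilde\varrho \in [\bar c(\varrho),+\infty]$, so condition \eqref{qc0} of Theorem \ref{THM1} applies with this $v$, yielding $v(\nabla y(x)) = Zv(\nabla y(x)) \leq \int_{\regplus} v(A)\,\nu_x(\d A)$ for a.e.\ $x$. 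Integrating over $\O$ gives $I(y) \leq \lim_k I(y_k)$, contradicting the assumption. Since every subsequence has a further subsequence along which $I(y) \leq \liminf I(y_k)$, the full liminf inequality $I(y) \leq \liminf_k I(y_k)$ follows.

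For the ``only if'' direction (weak* lower semicontinuity $\Rightarrow$ bi-quasiconvexity): I would test the semicontinuity against explicit generating sequences for homogeneous Young measures. Fix $A \in \R^{2\times 2}_{\mathrm{inv}+}$ and any $\varphi \in W^{1,\infty,-\infty}_+(\O;\R^2)$ with $\varphi = Ax$ on $\partial\O$; I must show $|\O|v(A) \leq \int_\O v(\nabla\varphi)\,\d x$, which by Remark \ref{biQCProp}(2) is equivalent to bi-quasiconvexity. The idea is the classical Vitali-covering / Riemann-sum construction: extend $\varphi - A(\cdot)$ periodically and rescale to build $y_k \in W^{1,\infty,-\infty}_+(\O;\R^2)$ that agree with $Ax$ on $\partial\O$, satisfy $y_k \wstar A(\cdot)$, have uniformly bounded bi-Lipschitz constants, and generate the homogeneous Young measure whose (constant in $x$) value is the Young measure generated by $\{\nabla\varphi\}$ under this periodic rescaling — so that $I(y_k) \to \int_\O \int v\,\d\nu = \int_\O v(\nabla\varphi(y))\,\d y \cdot |\O|/|\O| $, i.e.\ $\lim_k I(y_k) = |\O|\,\fint_\O v(\nabla\varphi)\,\d x$. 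Meanwhile $y = A(\cdot)$ is the weak* limit and $I(y) = |\O|v(A)$. Weak* lower semicontinuity then gives $|\O|v(A) \leq |\O|\fint_\O v(\nabla\varphi)\,\d x = \int_\O v(\nabla\varphi)\,\d x$, as required.

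\textbf{Main obstacle.} The delicate point is the ``only if'' direction, specifically producing the rescaled sequence $\{y_k\}$ that (i) stays globally bi-Lipschitz with a \emph{uniform} constant, (ii) keeps the affine boundary datum $Ax$ on $\partial\O$ exactly, and (iii) correctly generates the homogeneous Young measure. Periodic rescaling of $\varphi$ on a Vitali packing of $\O$ by small squares does preserve the bi-Lipschitz constant cell-by-cell, but matching the pieces across cell interfaces and filling the leftover region near $\partial\O$ while remaining a \emph{global} homeomorphism (not merely locally bi-Lipschitz) is exactly the kind of cut-off/gluing difficulty the paper develops its machinery for; here one should invoke the cut-off technique of Section \ref{sect-cutOff} (or Theorem \ref{SmoothApprox} and the extension Theorem \ref{extensionTheorem}) to interpolate between the oscillating interior and the affine boundary on a layer of vanishing measure. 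In the ``if'' direction the only mild subtlety is justifying the passage to the limit $\int_\O v(\nabla y_k) \to \int_\O\int v\,\d\nu_x$ when $v$ is merely continuous on the compact set $\RrhoPlus$ and $+\infty$ elsewhere — but this is immediate since all $\nabla y_k$ take values in $\RrhoPlus$, on which $v$ extends to a bounded continuous function, so \eqref{jedna2} applies directly.
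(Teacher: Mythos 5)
Your proof is correct and follows the paper's own route: the ``if'' direction applies condition \eqref{qc0} of Theorem~\ref{THM1} to the Young measure generated (along a subsequence) by $\{\nabla y_k\}$, and the ``only if'' direction tests lower semicontinuity against a generating sequence with affine boundary data for the homogeneous measure $\nu$ defined by $\varphi$ — the only difference being that you sketch the Vitali/rescaling construction for this sequence inline, whereas the paper simply cites the already-established homogenization machinery (Lemma~\ref{homogenization}). One small slip: a function in $\mathcal{O}(+\infty)$ is not in $\mathcal{O}(\varrho')$ for finite $\varrho'$ (membership requires $v=+\infty$ off $\R^{2\times2}_{\varrho'+}$, not merely off $\R^{2\times2}_{\mathrm{inv}+}$), but this is harmless since \eqref{qc0} explicitly admits $\tilde\varrho=+\infty$, which is the case you actually need.
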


Finally, as an application we can state the following statement about the existence of  minimizers.
\begin{proposition}\label{minimizer}
Let $\Omega \subset \R^2$ be a bounded Lipschitz domain and let $0\le v \in\mathcal{O}(+\infty)$ be bi-quasiconvex.
Let further $\varepsilon>0$ and define $I_\varepsilon:W^{1,\infty,-\infty}_+(\O;\R^2)\to\R$
$$
I_\varepsilon (u)= \int_\O v(\nabla u(x))\,\d x +\varepsilon(\|\nabla u\|_{L^\infty(\O;\R^{2\times 2})}+\|\nabla u^{-1}\|_{L^\infty(u(\O);\R^{2\times 2})})\ .$$

Let $u_0\in W^{1,\infty,-\infty}_+(\O;\R^2)$ and  
$$\mathcal{A}=\{ u\in W^{1,\infty,-\infty}_+(\O;\R^2);\,  u=u_0\mbox{ on }\partial\O\}\ .$$

Then there is a minimizer of $I_\varepsilon$ on $\mathcal{A}$.

\end{proposition}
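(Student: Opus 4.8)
\textbf{Proof proposal for Proposition \ref{minimizer}.}

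The plan is to run the direct method of the calculus of variations, using Corollary \ref{wlsc} to handle the nonpolyconvex bulk term and the $\varepsilon$-penalization to retain compactness of the minimizing sequence in $W^{1,\infty,-\infty}_+(\O;\R^2)$. First I would check that $\mathcal{A}$ is nonempty: $u_0\in\mathcal{A}$, and since $v\ge 0$ is real-valued on $\RrhoPlus$ for every $\varrho$ (being in $\mathcal{O}(+\infty)$), $I_\varepsilon(u_0)<+\infty$, so $m:=\inf_{\mathcal{A}}I_\varepsilon\in[0,+\infty)$. Take a minimizing sequence $\{u_k\}\subset\mathcal{A}$ with $I_\varepsilon(u_k)\to m$. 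For $k$ large, $I_\varepsilon(u_k)\le m+1$, and since $v\ge0$ the penalization term gives $\varepsilon(\|\nabla u_k\|_{L^\infty}+\|\nabla u_k^{-1}\|_{L^\infty})\le m+1$. Hence the Lipschitz constants of $u_k$ and of $u_k^{-1}$ are uniformly bounded; because $u_k=u_0$ on $\partial\O$, the images $u_k(\O)$ all coincide with $u_0(\O)=:\widetilde\O$ (two orientation-preserving homeomorphisms of $\O$ agreeing on $\partial\O$ have the same image), so the target domain is fixed along the sequence. Thus $\{u_k\}$ is bounded in $W^{1,\infty,-\infty}_+(\O;\R^2)$ in the sense of the relevant Definition, and after subtracting the constant to normalize (or simply using the $L^\infty$ bound on $\nabla u_k$ together with the fixed boundary values) we may extract a subsequence with $u_k\wstar u$ in $W^{1,\infty,-\infty}_+(\O;\R^2)$; by Remark \ref{convergence}, $u_k^{-1}\wstar u^{-1}$ in $W^{1,\infty}(\widetilde\O;\R^2)$ as well.

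Next I would verify $u\in\mathcal{A}$. The weak* convergence in $W^{1,\infty}(\O;\R^2)$ implies uniform convergence $u_k\to u$ on $\overline\O$, so $u=u_0$ on $\partial\O$; it remains to confirm $u\in W^{1,\infty,-\infty}_+(\O;\R^2)$, i.e.\ that $u$ is an orientation-preserving bi-Lipschitz homeomorphism. Bi-Lipschitz continuity with a uniform constant $\bar L$ passes to the uniform limit directly from the two-sided bound $\bar L^{-1}|x_1-x_2|\le|u_k(x_1)-u_k(x_2)|\le\bar L|x_1-x_2|$, so $u$ is $\bar L$-bi-Lipschitz, hence a homeomorphism. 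Orientation preservation ($\det\nabla u>0$ a.e.) follows because $\det\nabla u_k\ge \bar L^{-2}>0$ a.e.\ uniformly (the lower bound on $|\nabla u_k^{-1}|$ via sub-multiplicativity of the norm forces $|\det\nabla u_k|$ bounded below), and $\det$ is a null Lagrangian: $\det\nabla u_k\wstar\det\nabla u$ in $L^\infty(\O)$, so $\det\nabla u\ge\bar L^{-2}>0$ a.e. Therefore $u\in\mathcal{A}$.

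Finally I would pass to the limit in the functional, treating the two terms separately. For the bulk term, $v\in\mathcal{O}(+\infty)$ is bi-quasiconvex by hypothesis, so Corollary \ref{wlsc} applies and $\int_\O v(\nabla u)\,\d x\le\liminf_k\int_\O v(\nabla u_k)\,\d x$. For the penalty term, both $x\mapsto\nabla u_k$ and $z\mapsto\nabla u_k^{-1}$ converge weakly* in the respective $L^\infty$ spaces, and the $L^\infty$-norm is sequentially weakly* lower semicontinuous, so $\|\nabla u\|_{L^\infty(\O)}\le\liminf_k\|\nabla u_k\|_{L^\infty(\O)}$ and likewise for the inverse on the fixed domain $\widetilde\O$. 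Adding up, $I_\varepsilon(u)\le\liminf_k I_\varepsilon(u_k)=m$, and since $u\in\mathcal{A}$ we get $I_\varepsilon(u)=m$, so $u$ is a minimizer.

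The main obstacle I anticipate is the compactness/closedness step: one must make sure the sequence stays in the nonlinear set $W^{1,\infty,-\infty}_+(\O;\R^2)$ with a \emph{uniform} bi-Lipschitz constant and with a \emph{fixed} target domain, so that Remark \ref{convergence} can be invoked and the $\det>0$ property survives the limit. This is exactly where the penalization by $\|\nabla u^{-1}\|_{L^\infty}$ is essential (without it the inverses could degenerate and the limit need not be bi-Lipschitz), and where one uses the elementary but crucial fact that orientation-preserving homeomorphisms of $\O$ sharing the boundary trace $u_0|_{\partial\O}$ all have the common image $u_0(\O)$. Once the limit is known to lie in $\mathcal{A}$, lower semicontinuity is immediate from Corollary \ref{wlsc} and weak* lower semicontinuity of the norm.
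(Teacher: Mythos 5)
Your proposal is correct and takes essentially the same approach as the paper: direct method, with the $\varepsilon$-penalization supplying the uniform bi-Lipschitz bounds, Corollary~\ref{wlsc} giving lower semicontinuity of the bulk term, the weak* lower semicontinuity of the $L^\infty$-norm handling the penalty terms, and Remark~\ref{convergence} (with the fixed target domain forced by the Dirichlet condition) taking care of the $\|\nabla u^{-1}\|_{L^\infty}$ term. You simply spell out in more detail than the paper does why the weak* limit stays in $W^{1,\infty,-\infty}_+(\O;\R^2)$, but the underlying reasons (uniform bi-Lipschitz constants passing to the uniform limit, weak* continuity of $\det$ in 2D, fixed image $u_0(\O)$) are the same ones the paper is implicitly invoking.
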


\begin{remark} \mbox{}
\begin{enumerate}
\item Note that, we needed in Theorem \ref{THM1} that  $\tilde\varrho>\varrho$ so that boundedness of $\int_\Omega v(\nabla y_k) \md x$ does not yield the right $L^\infty$-constraint of the gradient of the minimizing sequence. This is actually a known fact in the $L^\infty$-case \cite{k-p1} and is usually overcome by assuming that the generating sequence does not need to be Lipschitz but is only bounded in some $W^{1,p}(\Omega; \R^2)$ space. Alternatively, one can use Proposition~\ref{minimizer} stated above.

\item It will follow from the proof that the constant $\bar{c}(\varrho)$ is actually determined by the extension Theorem \ref{extensionTheorem}.

 \item Note that if one can show that $Zv$ is already a bi-quasiconvex function (cf. Remark \ref{biQCProp}(4)) then \eqref{qc0} can be replaced by requiring that 
\begin{equation}
v(\nabla u(x))\le\int_{\regplus} v(A)\nu_x(\md A)
\label{qc0-alt}
\end{equation}
is fulfilled for all bi-quasiconvex $v$ in $\mathcal{O}(\tilde\varrho)$. Indeed, \eqref{qc0-alt} follows directly from \eqref{qc0} if $v$ is bi-quasiconvex. On the other hand, if \eqref{qc0-alt} holds and if we knew that $Zv$ is bi-quasiconvex, we know that
$$
Zv(\nabla u(x))\le\int_{\regplus} Zv(A)\nu_x(\md A) \leq \int_{\regplus} v(A)\nu_x(\md A)\ ,
$$
where the second inequality is due to Remark \ref{biQCProp}(1). 

\end{enumerate}
\end{remark}


\bigskip


 \section{Proofs}\label{Proofs}
 Here we prove Theorem~\ref{THM1}. Actually, we follow in large parts \cite{k-p1,pedregal} since, as pointed out in the introduction, the main difficulty lies in constructing an appropriate cut-off which we do in Section \ref{sect-cutOff}; so, we mostly just sketch the proof and refer to these references.
 
\subsection{Proof of Theorem \ref{THM1} - Necessity}

Condition \eqref{supp} follows from \cite[Propositions~2.4 and 3.3]{bbmkgpYm} and from the fact that any Young measure generated by  a sequence bounded in the $L^\infty$ norm is supported on a compact set.

In order to show \eqref{firstmoment0}, realize that it expresses the fact that the first moment of  $\nu$ is just the weak* limit of a generating sequence $\{\nabla y_k\}\subset L^\infty(\O;\R^{2\times 2})$.  The sequence $\{y_k\}$ is also  bounded  in $W_+^{1,\infty,-\infty}(\O;\R^2)$ and 
$\{y_k\}$ converges strongly to some $y\in W^{1,\infty}(\O;\R^2)$. Passing to the limit in \eqref{bi-li} written for $y_k$ instead of $y$ shows that $y$ is bi-Lipschitz. The  $L^\infty$- weak* convergence of $\det\nabla y_k$ to $\det\nabla y$ finally implies that  $y\in W_+^{1,\infty,-\infty}(\O;\R^2)$ as a bi-Lipschitz map cannot change sign of its Jacobian on $\O$.

To prove \eqref{qc0} we follow a standard strategy, e.g., as in \cite{pedregal}. First, we show that almost every  individual measure $\nu_x$ is a homogeneous Young measure generated by bi-Lipschitz maps with affine boundary data. The latter fact is implied by Theorem~\ref{cut-off}.
Then \eqref{qc0} stems from the very definition of bi-quasiconvexity.

 \begin{lemma}\label{localization}
Let $\nu\in \Gradrho$. Then $\mu=\{\nu_a\}_{x\in\O}\in\Gradrho$ for a.e. \ $a\in \O$.
\end{lemma}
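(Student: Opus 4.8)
\textbf{Proof strategy for Lemma \ref{localization} (localization).}
The plan is to implement the standard localization argument for gradient Young measures (as in \cite{pedregal,k-p1}), taking care that at every step the generating sequences remain bi-Lipschitz with a \emph{uniformly} bounded bi-Lipschitz constant so that they witness membership in $\Gradrho$. Fix a generating sequence $\{y_k\}\subset W^{1,\infty,-\infty}_+(\O;\R^2)$ with bi-Lipschitz constants $\le\varrho'$ (for some $\varrho'$ depending on $\varrho$) whose gradients generate $\nu$. Choose a Lebesgue point $a\in\O$ of the map $x\mapsto\nu_x$ (more precisely, a point at which $x\mapsto\A{\nu_x,v}$ is approximately continuous simultaneously for all $v$ in a countable dense subset of $C_0(\R^{2\times2})$, and at which $\nabla u$ is approximately continuous); almost every $a\in\O$ has this property. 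The goal is to show the homogeneous measure $\mu\equiv\nu_a$ lies in $\Gradrho$.

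The key steps, in order: (i) For a small cube $Q(a,r)\subset\O$, rescale by setting, for $x\in Q(0,1)$,
\[
y_k^{r}(x)=\frac{y_k(a+rx)-y_k(a)}{r}.
\]
Each $y_k^{r}$ has the same bi-Lipschitz constant as $y_k$, hence $\le\varrho'$, and $\nabla y_k^{r}(x)=\nabla y_k(a+rx)$. (ii) Using a diagonal argument, extract $r=r_j\to0$ and, for each $j$, $k=k(j)\to\infty$ so that the gradients of $z_j:=y_{k(j)}^{r_j}$ generate, on $Q(0,1)$, the homogeneous Young measure $\bar\mu=\{\mu\}_{x\in Q(0,1)}$ with $\mu=\nu_a$; this uses the Lebesgue-point property of $a$ together with the metrizability of the weak* topology on Young measures with uniformly compactly supported masses, exactly as in \cite[Sec.~8.1]{pedregal}. (iii) The sequence $\{z_j\}$ is bi-Lipschitz with constant $\le\varrho'$ and orientation-preserving, so its gradients generate a measure in $\Gradrho$ by definition; since that measure is $\bar\mu$, we conclude $\mu\in\Gradrho$, i.e. $\nu_a\in\Gradrho$. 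Finally, note the first-moment map of $\bar\mu$ is the constant $\nabla u(a)\in\regplus$ by approximate continuity, which is consistent with \eqref{firstmoment0} for $\mu$.

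\textbf{Main obstacle.} The only genuinely nontrivial point, compared with the classical unconstrained case, is step (iii): verifying that the constructed homogeneous generating sequence really does \emph{witness} membership in $\Gradrho$ rather than merely in some $\mathcal{Y}^\infty$. Because the class $W^{1,\infty,-\infty}_+$ is not a linear space and is not closed under the convex-averaging/cut-off operations used in the unconstrained theory, one cannot freely manipulate $\{z_j\}$. Here, however, no manipulation is needed: $\{z_j\}$ is literally a (rescaled, relabeled) subfamily of the given admissible maps $\{y_k\}$, so it is automatically $\varrho'$-bi-Lipschitz and orientation-preserving, and the definition of $\Gradrho$ applies directly. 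The subtlety is purely bookkeeping — ensuring the double limit $r_j\to0$, $k(j)\to\infty$ can be taken so that a \emph{single} sequence of admissible maps generates $\bar\mu$; this is handled by the standard diagonalization against a countable dense set of test functions $v\in C_0(\R^{2\times2})$ and the fact that all relevant measures are supported in the fixed compact set $R_{\varrho}^{2\times2}$, so that weak* convergence is metrizable. The passage of the affine boundary data / first-moment information to $\mu$, which is what will be needed when this lemma is fed into the proof of \eqref{qc0} via Theorem~\ref{cut-off}, then follows from the approximate continuity of $\nabla u$ at $a$.
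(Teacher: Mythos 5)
Your proposal is correct and follows essentially the same route as the paper: both use the Pedregal blow-up construction (your $y_k^{r}(x)=\big(y_k(a+rx)-y_k(a)\big)/r$ is the same, up to translation and relabeling $r=1/j$, as the paper's $ju_k(a+x/j)$), paired with the observation that this rescaling leaves the bi-Lipschitz constant, injectivity, and orientation untouched, so the diagonal sequence still witnesses membership in $\Gradrho$. The paper simply compresses the Lebesgue-point/diagonalization bookkeeping into a citation of \cite[Th.~7.2]{pedregal}, which you have spelled out explicitly.
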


\bigskip
\noindent
{\it Proof.}  Note that the construction in the proof of \cite[Th.~7.2]{pedregal} does not affect orientation-preservation nor the bi-Lipschitz property. Namely, if gradients of a bounded sequence $\{u_k\}\subset W_+^{1,\infty,-\infty}(\O;\R^2)$ generate $\nu$ then  for almost all $a\in\O$  one constructs a localized sequence $\{ju_k(a+x/j)\}_{j,k\in\N}$ (note that this function is clearly injective if $u_k$ was; since the norm of the gradient is just shifted this yields the bi-Lipschitz property) whose gradients generate $\mu$ as $j,k\to\infty$. 
\hfill $\Box$
 
\bigskip

\begin{proposition}\label{proposition:jensen}
Let $\nu\in\GradPlus$, supp$\,\nu\subset\Rrho$ be  such that   $\nabla y(x)= \int_\Rrho A\nu_x(\md A)$ for almost all $x\in\O$, where $y\in W^{1,\infty,-\infty}_+(\O;\R^2)$. Then for all $\tilde\varrho\in[\bar{c}(\varrho);+\infty]$, almost all $x\in\O$  and all $v\in\mathcal{O}(\tilde\varrho)$ we have 
\be \int_\reg v(A)\nu_x(\md A)\ge Z v(\nabla y(x))\ .\ee
\end{proposition}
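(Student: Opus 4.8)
\textbf{Proof proposal for Proposition \ref{proposition:jensen}.}

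The plan is to reduce the statement to the homogeneous case via the localization Lemma \ref{localization} and then to invoke the definition of $Zv$ together with the cut-off Theorem~\ref{cut-off}. First I would fix $\tilde\varrho \in [\bar c(\varrho);+\infty]$, a function $v \in \mathcal{O}(\tilde\varrho)$, and a point $a \in \O$ which is simultaneously a Lebesgue point of $x \mapsto \nabla y(x)$, a Lebesgue point of $x \mapsto \int_{\reg} v(A)\nu_x(\d A)$ (note this map is measurable and, since $v$ is bounded below and the support constraint \eqref{supp} holds with parameter $\varrho \le \tilde\varrho$, the integral is finite on the support), and such that the homogeneous measure $\mu := \{\nu_a\}_{x \in \O}$ belongs to $\Gradrho$ — the last property holding for a.e.~$a$ by Lemma \ref{localization}. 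For such $a$, the first moment of $\mu$ is the constant matrix $F_a := \nabla y(a) = \int_{\Rrho} A\,\nu_a(\d A)$, which lies in $\RrhoPlus$ because $y \in W^{1,\infty,-\infty}_+(\O;\R^2)$ is bi-Lipschitz with the appropriate constant (here one uses that $\det$ cannot change sign and the bound on $|\nabla y|,|\nabla y^{-1}|$).

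Next, since $\mu \in \Gradrho$ is generated by gradients of a bounded sequence $\{u_k\} \subset W^{1,\infty,-\infty}_+(\O;\R^2)$ with $\nabla u_k \wstar F_a$, the key step is to replace $\{u_k\}$ by a sequence $\{\tilde u_k\}$ with the \emph{same} affine boundary datum $\tilde u_k = F_a x$ on $\partial\O$, still bounded in $W^{1,\infty,-\infty}_+$ (with bi-Lipschitz constant controlled by $\bar c(\varrho)$, which is where the hypothesis $\tilde\varrho \ge \bar c(\varrho)$ enters), and whose gradients still generate $\mu$. This is precisely the content of the cut-off Theorem~\ref{cut-off}: one modifies $u_k$ on a boundary layer of vanishing measure using the bi-Lipschitz extension theorem \ref{extensionTheorem}, and since the modification happens on a set of measure tending to zero while $v$ is bounded (on $\Rrho[\bar c(\varrho)]$) and $+\infty$ only outside $\RrhoPlus[\bar c(\varrho)]$ — where the modified gradients do not go — the Young measure is unchanged. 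Then, by the definition of $Zv$ applied to the admissible competitors $\tilde u_k \in W^{1,\infty,-\infty}_{F_a}(\O;\R^2)$,
\[
Zv(F_a) \le |\O|^{-1}\int_\O v(\nabla \tilde u_k(x))\,\d x,
\]
and letting $k \to \infty$, using that $v$ is continuous and bounded on the compact set $\Rrho[\tilde\varrho]$ (bounded below in general), the right-hand side converges to $|\O|^{-1}\int_\O \int_{\reg} v(A)\,\mu_x(\d A)\,\d x = \int_{\reg} v(A)\,\nu_a(\d A)$ by \eqref{jedna2}. This gives $Zv(\nabla y(a)) \le \int_{\reg} v(A)\,\nu_a(\d A)$ for a.e.~$a$, which is the claim.

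The main obstacle is the passage to the limit in $\int_\O v(\nabla \tilde u_k)\,\d x$ when $v$ is allowed to take the value $+\infty$ and, in the case $\tilde\varrho = +\infty$, when $v$ is merely continuous on all of $\R^{2\times 2}_{\rm inv}$ without a uniform bound. One has to know that the $\{\nabla \tilde u_k\}$ take values in a \emph{fixed} compact set $\Rrho[\bar c(\varrho)]$ independent of $k$ — so that $v$ restricted there is uniformly continuous and bounded — which again relies on the quantitative bi-Lipschitz bound furnished by Theorem~\ref{cut-off} via the constant $C L^4$ in Theorem~\ref{extensionTheorem}. Once the values are confined to this compact set, $v(\nabla\tilde u_k) \wstar \langle \mu_x, v\rangle$ weakly* in $L^\infty(\O)$ in the usual way, and the proof of \eqref{qc0} for arbitrarily large $\tilde\varrho$ follows by the same argument applied to the (possibly larger) support bound. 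A minor technical point to handle at the outset is measurability of $x \mapsto \langle\nu_x,v\rangle$ for $v$ with values in $\R\cup\{+\infty\}$: this is obtained by approximating $v$ from below by the truncations $\min(v,m)$, $m \in \N$, which are in $C(\Rrho[\tilde\varrho])$, and using monotone convergence together with \eqref{supp}.
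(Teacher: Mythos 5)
Your proposal is correct and follows essentially the same route as the paper: localize via Lemma~\ref{localization} to the homogeneous measure $\{\nu_a\}$, invoke the cut-off result (the paper cites Corollary~\ref{corollary-cut-off}, which is the sequential form of Theorem~\ref{cut-off} you use) to pass to a $\tilde\varrho$-bi-Lipschitz generating sequence with affine boundary datum $\nabla y(a)x$, and then apply the definition of $Zv$ and pass to the limit. You additionally spell out the measurability and uniform-compactness considerations in the limit passage, which the paper leaves implicit, but the argument is the same.
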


\bigskip

\noindent{\it Proof.} 
We know from Lemma~\ref{localization} that $\mu=\{\nu_a\}_{x\in\O}\in\Gradrho$ for a.e.~$a\in\O$, so there exits its generating sequence $\{\nabla u_k\}_{k\in\N}$ such that $\{u_k\}_{k\in\N}\subset W_+^{1,\infty,-\infty}(\O;\R^2)$ and for almost all $x\in\O$ and all $k\in\N$ $\nabla u_k(x)\in\Rrho$.  Moreover, $\{u_k\}_{k\in\N}$  weakly* converges to the map $x \mapsto (\nabla y(a))x$ which is bi-Lipschitz.   

Using Corollary~\ref{corollary-cut-off}, we can, without loss of generality, suppose that $u_k$ is $\tilde{\varrho}$-bi-Lipschitz for all $k\in\N$ and $u_k(x)=\nabla y(a)x$ if $x\in\partial\O$. Therefore, we have
$$
|\O|\int_\regplus v(A)\nu_a(\md A) = \lim_{k\to\infty} \int_\O v(\nabla u_k(x))\,\md x \ge |\O|Z v(\nabla y(a)) \ .$$
\hfill
$\Box$
  
\bigskip

\subsection{Proof of Theorem~\ref{THM1} - sufficiency}
                                                               
We  need to show  that conditions \eqref{supp},\eqref{firstmoment0}, and \eqref{qc0} are also sufficient for $\nu \in\mathcal{Y}^\infty(\O;\R^{2\times 2})$ to be in ${\mathcal{GY}^{+\infty,-\infty}(\O;\R^{2\times 2})}$. Put 
\be\mathcal{U}^\varrho_A=\{y\in W_{A}^{1,\infty,-\infty}(\O;\R^2);\  \nabla y(x)\in \RrhoPlus\,\mbox{for a.a.~$x\in\O$}\}\ ;\ee
In other words this is the set of $\varrho$-bi-Lipschitz functions with affine boundary values equal to $x\mapsto Ax$. 
Consider for $A\in\R^{2\times 2}_\mathrm{inv}$ the set 
\be\mathcal{M}^\varrho_A=\{\overline{\delta_{\nabla y}};\ y\in\mathcal{U}^\varrho_A\}\ ,\ee
where $\overline{\delta_{\nabla y}}\in \rca(\R^{2\times 2})$ is defined \WE for all $v\in C_0(\R^{2\times 2})$ \EEE as $\la \overline{\delta_{\nabla y}}, v\ra=|\O|^{-1}\int_\O v(\nabla y(x))\,\md x$; $\overline{\mathcal{M}^\varrho_A}$ will denote its weak$^*$ closure.

\begin{lemma}\label{convexity}
 Let $A\in\RrhoPlus$.  Then the set $\mathcal{M}^\varrho_A$ is nonempty and  convex.
\end{lemma}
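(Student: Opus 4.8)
The plan is to verify the two assertions separately. For \emph{non-emptiness}, I would simply exhibit the linear map $y(x)=Ax$: since $A\in\RrhoPlus$, this map is $\varrho$-bi-Lipschitz (indeed $|A|\le\varrho$ and $|A^{-1}|\le\varrho$ give exactly \eqref{bi-li} with $L=\varrho$), it is orientation preserving because $\det A>0$, it has affine boundary values $x\mapsto Ax$ on $\partial\Omega$, and $\nabla y(x)=A\in\RrhoPlus$ a.e. Hence $y\in\mathcal{U}^\varrho_A$ and $\overline{\delta_A}=\overline{\delta_{\nabla y}}\in\mathcal{M}^\varrho_A$, so the set is nonempty.

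For \emph{convexity}, the key point is the standard ``lamination/filling'' construction adapted to the bi-Lipschitz, orientation-preserving setting. Given $y_1,y_2\in\mathcal{U}^\varrho_A$ and $\theta\in(0,1)$, I want to produce $y\in\mathcal{U}^\varrho_A$ with $\overline{\delta_{\nabla y}}=\theta\,\overline{\delta_{\nabla y_1}}+(1-\theta)\,\overline{\delta_{\nabla y_2}}$, at least up to passing to the weak$^*$ closure $\overline{\mathcal{M}^\varrho_A}$ — note that the statement only asserts convexity of $\mathcal{M}^\varrho_A$ itself, but the natural construction gives an element of the closure, and one should check whether the problem really needs the non-closed version (if so, one argues that finitely many exact copies suffice, see below). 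The construction: decompose $\Omega$ (up to a null set) into countably many disjoint open subdomains, on each of which one places a rescaled, translated copy of either $y_1$ (on a total fraction $\theta$ of the volume) or $y_2$ (on the remaining fraction $1-\theta$), glued together. Concretely, on a small cube $Q=x_0+\e\mathcal D(0,1)\subset\Omega$ one uses $x\mapsto Ax+\e\big(y_i(\tfrac{x-x_0}{\e})-A\tfrac{x-x_0}{\e}\big)$, which agrees with $Ax$ on $\partial Q$ (because $y_i=A\cdot$ there), is again $\varrho$-bi-Lipschitz with the \emph{same} constant (rescaling and translation do not change bi-Lipschitz constants, as in the ``Rescaled squares'' remark), is orientation preserving, and has gradient distributed exactly as $\nabla y_i$. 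Choosing a Vitali-type cover of $\Omega$ by such cubes, assigning a proportion $\theta$ of the total measure to $y_1$-cubes and $1-\theta$ to $y_2$-cubes, and setting $y=Ax$ on the leftover null set, one obtains $y\in\mathcal{U}^\varrho_A$ with the desired barycenter; if only finitely many cubes are used one gets an exact convex combination up to an arbitrarily small error, and a diagonal argument then places the exact convex combination in $\overline{\mathcal{M}^\varrho_A}$ (and, with a careful countable filling, in $\mathcal{M}^\varrho_A$ itself).

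The \textbf{main obstacle} is ensuring \emph{global} injectivity of the glued map $y$, not merely injectivity on each piece. Because each building block equals the linear map $Ax$ on the boundary of its cube, the images $\{y(Q_j)\}$ of distinct cubes are pairwise disjoint up to their boundaries: indeed $y(Q_j)$ is the bounded region enclosed by $A(\partial Q_j)=\partial(AQ_j)$, i.e.\ $y(Q_j)=\overline{AQ_j}$, and the cubes $AQ_j$ are pairwise disjoint since the $Q_j$ are and $A$ is linear. Hence $y$ maps distinct cubes into disjoint images and is injective overall; moreover its inverse is built from the inverses of the blocks and inherits the Lipschitz bound $\varrho$, so $y$ is genuinely $\varrho$-bi-Lipschitz on all of $\Omega$. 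One must also check the measure-zero leftover set does not spoil this (it maps into a null set under the linear map and carries no gradient mass), and that the resulting $y$ is in $W^{1,\infty}$ with the stated constraints — all of which follow from the uniform bi-Lipschitz bound. Once injectivity is secured, membership in $\mathcal{U}^\varrho_A$ is immediate and the barycenter computation is a routine volume-fraction count, completing the proof.
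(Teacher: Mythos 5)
Your overall strategy (exhibit $x\mapsto Ax$ for nonemptiness; for convexity, decompose $\Omega$ up to a null set into rescaled, translated subdomains, place rescaled copies of $y_1$ and $y_2$ on a $\lambda$-, respectively $(1-\lambda)$-, fraction of the volume, glue with $Ax$ on the leftover, and verify global injectivity by observing that the image of each piece is exactly the image under the linear map) is the one the paper follows. However, there is a concrete error in the construction: you build your blocks on \emph{cubes} $Q=x_0+\e\,\mathcal D(0,1)$, and you claim the map $x\mapsto Ax_0+\e\,y_i\big(\tfrac{x-x_0}{\e}\big)$ agrees with $Ax$ on $\partial Q$ ``because $y_i=A\cdot$ there.'' That is false unless $\Omega$ itself is the unit cube: the hypothesis $y_i=A\cdot$ holds on $\partial\Omega$, so the rescaled block agrees with $Ax$ only on $\partial(x_0+\e\Omega)$. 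The Vitali-type family therefore has to consist of copies $a_i+\epsilon_i\Omega$, $b_i+\rho_i\Omega$ of the \emph{original domain} $\Omega$ (which is what the paper does, following \cite[Lemma~8.5]{pedregal}), not of cubes. With that replacement your gluing is continuous and the argument proceeds.

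A secondary, more minor remark: to conclude that $y$ is genuinely $\varrho$-bi-Lipschitz on all of $\Omega$ (not just each block), the paper invokes Ball's global invertibility Theorem~\ref{jmball}, verifying condition \eqref{integral-Cond} from the a.e.\ bound $\nabla y(x)\in\RrhoPlus$ and the affine (hence homeomorphic) boundary data. Your informal argument that $y(Q_j)$ is ``the bounded region enclosed by $A(\partial Q_j)$'' is the same degree-theoretic idea, but as written it asserts rather than proves that the image fills the enclosed region; citing Theorem~\ref{jmball} (or a degree argument) would make this airtight. Finally, your worry about only landing in the weak$^*$ closure is unnecessary: once the countable Vitali cover of $\Omega$ by copies of $\Omega$ is chosen so that the $y_1$-pieces have total measure $\lambda|\Omega|$, the leftover set is Lebesgue-null, $y\in\mathcal U^\varrho_A$ exactly, and $\overline{\delta_{\nabla y}}=\lambda\,\overline{\delta_{\nabla y_1}}+(1-\lambda)\,\overline{\delta_{\nabla y_2}}$ holds with no closure or diagonal argument needed.
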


\begin{proof}
To show that $\mathcal{M}^\varrho_A\ne\emptyset$ is trivial because $x\mapsto y(x)=Ax$ is an element of this set as  $A$ has a positive determinant. 

To show that $\mathcal{M}^\varrho_A$ is convex we follow \cite[Lemma~8.5]{pedregal}.  We take $y_1,y_2\in \mathcal{U}^\varrho_A$ and, for a given $\lambda\in(0,1)$, we find a subset $D\subset\O$ such that $|D|=\lambda|\O|$. There are two countable disjoint families  of subsets of $D$ and $\O\setminus D$ of the form 
$$
\{a_i+\epsilon_i\O;\ a_i\in D,\ \epsilon_i>0,\ a_i+\epsilon_i\O\subset D\}$$
and 
$$
\{b_i+ \rho_i\O;\ b_i\in \O\setminus D,\ \rho_i>0,\ b_i+\rho_i\O\subset\O \setminus D\}\ $$
such that 
$$
D=\bigcup_{i}(a_i+\epsilon_i\O)\cup N_0 \ , \qquad \O\setminus D=\bigcup_{i}(b_i+\rho_i\O)\cup N_1  \ ,
$$
where the Lebesgue measure of $N_0$ and $N_1$ is zero. 
We define 
$$
y(x)=
\begin{cases}
\epsilon_iy_1\left(\frac{x-a_i}{\epsilon_i}\right)+Aa_i & \mbox{ if $x\in a_i+\epsilon_i\O$, }\\
 \rho_iy_2\left(\frac{x-b_i}{\rho_i}\right)+Ab_i & \mbox{ if $x\in b_i+\rho_i\O$, }\\
 Ax &\mbox{ otherwise,}
\end{cases}
\quad \mbox{yielding} \quad
\nabla y(x)=
\begin{cases}
\nabla y_1\left(\frac{x-a_i}{\epsilon_i}\right)& \mbox{ if $x\in a_i+\epsilon_i\O$, }\\
 \nabla y_2\left(\frac{x-b_i}{\rho_i}\right) & \mbox{ if $x\in b_i+\rho_i\O$, }\\
 A &\mbox{ otherwise.}
\end{cases}
$$
We must show that $y$ is $\varrho$-bi-Lipschitz; actually, as $\nabla y(x) \in \RrhoPlus$ a.e., we only need to check the injectivity of the mapping. 

To this end, we apply Theorem~\ref{jmball}. Notice that \eqref{integral-Cond} clearly holds for any $q \in (1, \infty)$ due to the a.e. \ bounds on $\nabla y$. Moreover, we have affine boundary data, $y(x)=Ax$, so that indeed the boundary data form a homeomorphism and, since $\Omega$ was a bounded Lipschitz domain, so will be $A\Omega=\{Ax;\, x\in\O\}$. Thus we conclude that, indeed, $y$ is $\varrho$-bi-Lipschitz.
 
In particular, $y\in{\mathcal U}_A^\varrho$ and 
$\overline{\delta_{\nabla y}}=\lambda\overline{\delta_{\nabla y_1}}+(1-\lambda)\overline{\delta_{\nabla y_2}}\ .$
\end{proof}

The following homogenization lemma can be proved the same way as  \cite[Th.~7.1]{pedregal}. The argument showing that a generating sequence of $\overline{\nu}$
comes from bi-Lipschitz orientation preserving maps comes from Theorem~\ref{jmball} the same way as in the proof of Lemma~\ref{convexity}.  

\bigskip 

\begin{lemma}\label{homogenization}
Let  $\{u_k\}_{k\in\N}\subset W^{1,\infty, -\infty}_A(\Omega;\R^2)$ be a bounded sequence in $W^{1,\infty,-\infty}_+(\O;\R^2)$. 
Let the Young measure $\nu\in \GradPlus$ be generated by $\{\nabla u_k\}_{k\in\N}$. Then there is a another bounded sequence $\{w_k\}_{k\in\N}\subset W^{1,\infty, -\infty}_A(\Omega;\R^2)$ that generates a homogeneous (i.e. independent of $x$)  measure  $\bar\nu$   defined through
\be\label{homog} 
\int_{\RrhoPlus} v(s)\bar\nu(\md s)= \frac{1}{|\O|}\int_{\O}\int_{\RrhoPlus} v(s)\nu_x(\md s)\,\md x\ , \ee
for any $v\in C(\RrhoPlus)$ and almost all $x\in\O$. Moreover, $\bar\nu \in \GradPlus$.
\end{lemma}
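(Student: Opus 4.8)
The plan is to prove Lemma~\ref{homogenization} by mimicking the classical homogenization construction for gradient Young measures (Pedregal, \cite[Th.~7.1]{pedregal}), while checking at every step that the bi-Lipschitz and orientation-preserving structure is preserved. First I would fix a countable dense (in the sup norm on $\RrhoPlus$) family of test functions $\{v_j\}_{j\in\N}\subset C(\RrhoPlus)$, and observe that it suffices to produce a sequence $\{w_k\}$ whose gradients generate a homogeneous measure $\bar\nu$ satisfying \eqref{homog} for each $v_j$; by density this then holds for all $v\in C(\RrhoPlus)$. Since $\nu$ is generated by $\{\nabla u_k\}$ with $u_k\in W^{1,\infty,-\infty}_A(\O;\R^2)$ uniformly bi-Lipschitz (constant $\varrho$, say), for each fixed $k$ the function $x\mapsto v_j(\nabla u_k(x))$ is bounded, and $\int_\O v_j(\nabla u_k)\,\md x \to \int_\O\int v_j(s)\nu_x(\md s)\,\md x$ as $k\to\infty$.

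Next I would carry out the diagonal/self-similar tiling. As in the proof of Lemma~\ref{convexity}, write $\O$ (up to a null set) as a countable disjoint union $\O=\bigcup_i (a_i+\epsilon_i\O)\cup N$ of rescaled copies of $\O$ with $\sum_i|a_i+\epsilon_i\O| = |\O|$ (Vitali covering). For a fixed $k$ define
$$
w_k^{(m)}(x) = \epsilon_i\, u_k\!\left(\frac{x-a_i}{\epsilon_i}\right) + A a_i \quad\text{if } x\in a_i+\epsilon_i\O,\ i\le m,
$$
and $w_k^{(m)}(x)=Ax$ otherwise. Each such function has the same boundary datum $Ax$ on $\partial\O$, has gradient lying in $\RrhoPlus$ a.e., and — by Theorem~\ref{jmball} applied exactly as in Lemma~\ref{convexity}, using that $\eqref{integral-Cond}$ holds trivially from the a.e.\ bounds and that $A\O$ is a bounded Lipschitz domain — is injective, hence $\varrho$-bi-Lipschitz; so $w_k^{(m)}\in W^{1,\infty,-\infty}_A(\O;\R^2)$. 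Letting $m\to\infty$ and then extracting a diagonal sequence $w_k := w_k^{(m(k))}$ with $m(k)\to\infty$ fast enough, one gets that for every $j$,
$$
\frac{1}{|\O|}\int_\O v_j(\nabla w_k(x))\,\md x \;\longrightarrow\; \frac{1}{|\O|}\int_\O\int_{\RrhoPlus} v_j(s)\,\nu_x(\md s)\,\md x\ ,
$$
because each rescaled tile contributes $\epsilon_i^2|\O|^{-1}\int_\O v_j(\nabla u_k)\to \epsilon_i^2|\O|^{-1}\int_\O\int v_j\,\nu_x$ and the tiles exhaust $\O$. One must also check that $\{w_k\}$ is bounded in $W^{1,\infty,-\infty}_+(\O;\R^2)$, which is immediate since all $w_k$ are $\varrho$-bi-Lipschitz, and that $w_k\wstar Ax$ (so the first moment of $\bar\nu$ is $A$, consistent with homogeneity). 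The generated Young measure $\bar\nu$ is then the homogeneous measure defined by the right-hand side of \eqref{homog}, and it lies in $\GradPlus$ since it is generated by the $\varrho$-bi-Lipschitz orientation-preserving sequence $\{w_k\}$.

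The main obstacle, as in Lemma~\ref{convexity}, is verifying global injectivity of the glued maps $w_k^{(m)}$: gluing rescaled homeomorphisms that agree with the affine map $Ax$ on the boundaries of their respective tiles does \emph{not} automatically yield a homeomorphism of the whole domain, and this is precisely where Theorem~\ref{jmball} is indispensable — it upgrades the a.e.\ Jacobian positivity together with the affine (hence homeomorphic) boundary data on $\partial\O$ to global invertibility of $w_k^{(m)}:\O\to A\O$. A secondary technical point is the diagonalization: one has to interleave the limit $k\to\infty$ (convergence of $v_j(\nabla u_k)$) with the limit $m\to\infty$ (exhaustion of $\O$ by tiles) so that all countably many test functions $v_j$ are handled simultaneously, which is a routine $\varepsilon/3$ argument but should be stated carefully. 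Everything else — boundedness, preservation of the boundary datum, the bi-Lipschitz constant being exactly $\varrho$ (not merely $C\varrho^4$, since here no extension theorem is invoked, only rescaling) — is straightforward bookkeeping.
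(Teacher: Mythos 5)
Your overall strategy — the Pedregal-style tiling construction, checking the bi-Lipschitz and orientation-preserving structure via Theorem~\ref{jmball} exactly as in Lemma~\ref{convexity}, and rescaling preserving the bi-Lipschitz constant $\varrho$ — is indeed the approach the paper points to. However, there is a genuine gap in the way you arrange the tiling, and it concerns the very claim that the generated measure $\bar\nu$ is \emph{homogeneous}.

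You fix a single Vitali covering $\O = \bigcup_i(a_i+\epsilon_i\O)\cup N$ once and for all, and only vary the number $m$ of tiles you glue. But the first tile $T_1 = a_1+\epsilon_1\O$ has a fixed positive size, and for every $m$ and every $k$ you put on $T_1$ the rescaled map $x\mapsto \epsilon_1 u_k((x-a_1)/\epsilon_1)+Aa_1$, whose gradient is $\nabla u_k((x-a_1)/\epsilon_1)$. Consequently the Young measure generated by $\{\nabla w_k\}$ on $T_1$ is the rescaled copy $\{\nu_{(x-a_1)/\epsilon_1}\}_{x\in T_1}$ of the original (generally non-homogeneous) $\nu$, not the constant measure $\bar\nu$. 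What you verify — that the \emph{spatial average} $|\O|^{-1}\int_\O v_j(\nabla w_k)$ converges to the right number — only checks the action of the Young measure against $g\equiv 1$; a Young measure is homogeneous only if the weaker convergence $\int_\O g\,v_j(\nabla w_k)\,\md x \to (\int_\O g)\cdot\la\bar\nu,v_j\ra$ holds for all $g\in L^1(\O)$, and that is false in your construction for $g$ supported in $T_1$.

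The fix is exactly the classical one (Pedregal, Th.~7.1): take a \emph{sequence} of Vitali coverings $\{a_i^n+\epsilon_i^n\O\}_i$ with $\sup_i\epsilon_i^n\to 0$ as $n\to\infty$, glue rescaled copies of $u_{k}$ on the $n$-th covering to get $w_{n,k}$, and diagonalize $n,k\to\infty$. The vanishing tile size is what makes $g(a_i^n+\epsilon_i^n y)\approx g(a_i^n)$ on each tile, turning $\sum_i(\epsilon_i^n)^2 g(a_i^n)\int_\O v(\nabla u_k)$ into a Riemann sum that converges to $(\int_\O g)\cdot|\O|^{-1}\int_\O v(\nabla u_k)$; without that step the limit measure still sees the inhomogeneity of $\nu$. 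Once this is corrected, the rest of your argument (Theorem~\ref{jmball} for global injectivity of the glued maps, the scale-invariance of the bi-Lipschitz constant, the $\varepsilon/3$ diagonal over the countable test functions) is sound and coincides with the paper's intended proof.
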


\bigskip

\begin{proposition}\label{homocase}
Let $\mu$ be a probability measure supported on a compact set  $K\subset \R^{2\times 2}_{\alpha+}$ for some $\alpha \geq 1$  and let  $ A=\int_K s\mu(\md s)$.  Let $\varrho>\alpha$ and let 
\be\label{jensen} 
Z v(A)\le \int_{K} v(s)\mu(\md s)\ ,\ee
for all $v\in \mathcal{O}(\varrho)$.
Then $\mu\in \GradPlus$ and it is generated by gradients of mappings from $\mathcal{U}^\varrho_A$.
\end{proposition}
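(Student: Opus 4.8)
\textbf{Proof strategy for Proposition \ref{homocase}.} The plan is to follow the classical Kinderlehrer--Pedregal duality argument (cf.\ \cite[Th.~8.1]{pedregal}) adapted to the bi-Lipschitz setting, using the convexity of $\mathcal{M}^\varrho_A$ from Lemma \ref{convexity} and the cut-off machinery of Section \ref{sect-cutOff}. The key point is that $\mu\in\GradPlus$ generated by maps in $\mathcal{U}^\varrho_A$ is equivalent to $\mu\in\overline{\mathcal{M}^\varrho_A}$ (the weak$^*$ closure), because by Lemma \ref{homogenization} any homogeneous measure generated by a bounded sequence in $W^{1,\infty,-\infty}_A$ is, after the cut-off of Corollary \ref{corollary-cut-off}, generated by a sequence with the affine boundary datum $x\mapsto Ax$ and gradients in $\RrhoPlus$, hence lies in $\overline{\mathcal{M}^\varrho_A}$; conversely, a diagonalization recovers a single generating sequence. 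So it suffices to prove: if $\mu$ is a probability measure on $K\subset\R^{2\times 2}_{\alpha+}$ with barycenter $A$ satisfying \eqref{jensen} for all $v\in\mathcal{O}(\varrho)$, then $\mu\in\overline{\mathcal{M}^\varrho_A}$.

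First I would argue by contradiction via Hahn--Banach. Suppose $\mu\notin\overline{\mathcal{M}^\varrho_A}$. Since $\mathcal{M}^\varrho_A$ is a nonempty convex set (Lemma \ref{convexity}) of probability measures supported in the compact set $\RrhoPlus$, its weak$^*$ closure is a nonempty convex weak$^*$-compact subset of $\rca(\RrhoPlus)\cong C(\RrhoPlus)^*$. By the Hahn--Banach separation theorem there is a continuous function $g\in C(\RrhoPlus)$ and $\gamma\in\R$ with
$$
\int_{\RrhoPlus} g(s)\,\md\theta(s) \ge \gamma > \int_{K} g(s)\,\md\mu(s)
$$
for all $\theta\in\overline{\mathcal{M}^\varrho_A}$. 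Extending $g$ to $v:\R^{2\times 2}\to\R\cup\{+\infty\}$ by setting $v=g$ on $\RrhoPlus$ (in fact on $\Rrho$, using a continuous extension off $\RrhoPlus$) and $v=+\infty$ on $\R^{2\times 2}\setminus\RrhoPlus$, we obtain $v\in\mathcal{O}(\varrho)$. Testing the left inequality against the Dirac-type elements $\overline{\delta_{\nabla y}}$ for $y\in\mathcal{U}^\varrho_A$ gives $|\O|^{-1}\int_\O v(\nabla y)\,\md x\ge\gamma$ for every such $y$, hence $Zv(A)\ge\gamma$. On the other hand the right inequality reads $\int_K v(s)\,\md\mu(s)<\gamma\le Zv(A)$, contradicting the hypothesis \eqref{jensen}. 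Therefore $\mu\in\overline{\mathcal{M}^\varrho_A}$.

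Finally I would promote membership in $\overline{\mathcal{M}^\varrho_A}$ to the statement that $\mu$ is genuinely a gradient Young measure generated by maps in $\mathcal{U}^\varrho_A$. Since $L^1(\O;C_0(\R^{2\times 2}))$ is separable, the weak$^*$ topology on bounded subsets of $\rca(\RrhoPlus)$ is metrizable, so $\mu$ is the weak$^*$ limit of a sequence $\overline{\delta_{\nabla y_j}}$ with $y_j\in\mathcal{U}^\varrho_A$. A standard diagonalization then yields a single sequence whose gradients generate the homogeneous measure $\mu$, with all the $\varrho$-bi-Lipschitz and orientation-preserving properties preserved along the way (using Theorem \ref{jmball} exactly as in Lemma \ref{convexity} to guarantee global injectivity of the intermediate maps). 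Hence $\mu\in\GradPlus$.

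\textbf{Main obstacle.} The delicate point is not the Hahn--Banach separation itself but ensuring that the separating functional can be realized as an element of $\mathcal{O}(\varrho)$ \emph{for the given} $\varrho>\alpha$ and, more importantly, that the inequality $|\O|^{-1}\int_\O v(\nabla y)\,\md x\ge\gamma$ for all $y\in\mathcal{U}^\varrho_A$ really is the quantity controlled by $Zv(A)$ with the \emph{same} $\varrho$; this is where the gap $\bar c(\varrho)>\varrho$ and the cut-off Theorem \ref{cut-off}/Corollary \ref{corollary-cut-off} enter, since the cut-off inflates the bi-Lipschitz constant, and one must check the bookkeeping of constants so that a measure supported in $\R^{2\times 2}_{\alpha+}$ is approximated by maps that are $\varrho$-bi-Lipschitz rather than merely $\bar c(\alpha)$-bi-Lipschitz. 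I expect this constant-tracking — reconciling the loss in the extension theorem with the support constraint — to be the crux, everything else being a routine transcription of the Kinderlehrer--Pedregal scheme.
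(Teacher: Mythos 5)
Your proposal follows essentially the same route as the paper: a Hahn--Banach separation argument showing that \eqref{jensen} forces $\mu\in\overline{\mathcal{M}^\varrho_A}$ (using the convexity of $\mathcal{M}^\varrho_A$ from Lemma~\ref{convexity} and the fact that the separating functional $\tilde v\in C_0(\R^{2\times 2})$ restricted to $\RrhoPlus$ and extended by $+\infty$ lies in $\mathcal{O}(\varrho)$), followed by metrizability of the weak$^*$ topology on bounded sets and the homogenization Lemma~\ref{homogenization} to extract a single generating sequence in $\mathcal{U}^\varrho_A$. The ``main obstacle'' you flag is actually not present here: the cut-off Theorem~\ref{cut-off} and the inflated constant $\bar c(\varrho)$ play no role in this proposition's proof (they enter in Proposition~\ref{proposition:jensen} and in the final sufficiency argument), since the hypothesis $\varrho>\alpha$ together with $v=+\infty$ off $\RrhoPlus$ already identifies $Zv(A)$ with the infimum over $\mathcal{U}^\varrho_A$ and no constant is lost.
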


\bigskip

\begin{proof}
   First, notice that $|A|\le \alpha<\varrho<+\infty $. Secondly, the set of measures $\mu$ in the statement of the proposition is convex and contains  $\mathcal{M}^\varrho_A$ as its convex and non void  subset due to Lemma~\ref{convexity}. We show that no fixed $\mu$ satisfying \eqref{jensen} can be separated from the weak* closure of $\mathcal{M}^\varrho_A$ by a hyperplane.  We argue by a contradiction argument. Then  by the Hahn-Banach theorem, assume \ONE that there is  $\tilde v\in C_0(\R^{2\times2})$ \EEE that separates $\mathcal{M}^\varrho_A$ from $\mu$. In other words, there exists a constant $\tilde{c}$  such that
$$
\la \nu,\tilde v\ra \geq \tilde{c} \text{  for all $\nu\in  \mathcal{M}^\varrho_A$} \qquad \text{and} \qquad 
\la \mu,\tilde v\ra < \tilde{c} 
$$

However, since we are working with  probability measures, we may  use $\tilde{v}-\tilde c$ instead of  $\tilde{v}$.  In this  way, we can put  $\tilde{c} =0$. Hence,  without loss of generality, we  assume that
$$
0\le \la \nu,\tilde v\ra=\int_\Rrho \tilde v(s)\nu(\md s) =|\O|^{-1}\int_\O \tilde v(\nabla y(x))\,\md x\ , $$
for all $\nu \in \mathcal{M}^\varrho_A$ (and hence all $y\in\mathcal{U}_A^\varrho$)
 and $0>  \la \tilde \mu,\tilde v\ra$. 
Now, the function 
$$
 v(F) = \begin{cases} \tilde {v}(F) &\text{if $F \in \RrhoPlus$},\\
+\infty &\text{else},
\end{cases}
$$
is in $\mathcal{O}(\varrho)$. Notice that it follows  from (\ref{jensen}) that $Z  v(A)$ is finite.
Thus, $Z v(A)=\inf_{\mathcal{U}_A^\varrho}|\O|^{-1}\int_\O v(\nabla y(x))\,\md x$. Hence, $Z v(A) \geq 0$ and, by \eqref{jensen}, $0\le Zv(A)\le \int_{K} v(s)\mu(\md s)=\int_{K} \tilde v(s)\mu(\md s)$. \ONE As this holds for all hyperplanes,  $\mu\in \overline{\mathcal{M}^\varrho_A}$, a contradiction. \EEE As \ONE $C_0(\R^{2\times 2})$  \EEE is separable, the weak* topology on bounded sets in \ONE its dual, $\rca(\R^{2\times 2})$, \EEE  is   metrizable. 
Hence, there is a sequence $\{u_k\}_{k\in\N}\subset \mathcal{U}_A^\varrho$  such that for all $v\in C(\RrhoPlus)$ (and all $v\in\mathcal{O}(\varrho)$)
\begin{eqnarray}\label{lim1}
\lim_{k\to\infty}\int_\O v(\nabla u_k(x))\,\md x= |\O|\int_{\RrhoPlus} v(s)\mu(\md s)\ ,\end{eqnarray}
and $\{u_k\}_{k\in\N}$ is bounded in $W^{1,\infty,-\infty}_+(\O;\R^{2\times 2})$.  
Let $\nu$ be a Young measure generated by $\{\nabla u_k\}$ (or a subsequence of it). Then we have for  $v$ as above
\begin{eqnarray}\label{lim2}
\lim_{k\to\infty}\int_\O v(\nabla u_k(x))\,\md x =\int_\O\int_{\RrhoPlus} v(s)\nu_x(\md s)\,\md x=|\O|\int_{\RrhoPlus} v(s)\mu(\md s) \ .
\end{eqnarray}
As $u_k(x)=Ax$ for $x\in\partial\O$  we apply Lemma~\ref{homogenization} to get a new sequence $\{\tilde u_k\}$ bounded in $W^{1,\infty,-\infty}_+(\O;\R^{2\times 2})$ with $\tilde u_k(x)=Ax$ for $x\in\partial\O$. The sequence $\{\nabla \tilde u_k\}$ generates a homogeneous Young measure $\bar\nu$ given by \eqref{homog}, so that  in view of \eqref{lim2} we get for $g\in L^1(\O)$
 $$
\lim_{k\to\infty}\int_\O g(x)v(\nabla \tilde u_k(x))\,\md x= \int_\O g(x)\,\md x\frac{1}{|\O|}\int_{\O}\int_{\RrhoPlus} v(s)\nu_x(\md s)\,\md x
=\int_\O\int_{\RrhoPlus} g(x)v(s)\mu(\md s)\,\md x\ .$$
\end{proof}

\begin{lemma}\label{auxiliary} (see \cite[Lemma~7.9]{pedregal} for a more general case)
Let $\O\subset\R^n$ be an open domain  with $|\partial\O|=0$ and
let $N\subset\O$ be of the zero Lebesgue measure. For
$r_k:\O\setminus N\to (0,+\infty)$ and $\{f_k\}_{k\in\N}\subset
L^1(\O)$ there exists a set of points $\{a_{ik}\}\subset
\O\setminus N$ and positive numbers $\{\epsilon_{ik}\}$,
$\epsilon_{ik}\le r_k(a_{ik})$ such that
$\{a_{ik}+\epsilon_{ik}\bar\O\}$ are pairwise disjoint for each
$k\in\N$, $\bar\O=\cup_i \{a_{ik}+\epsilon_{ik}\bar\O\}\cup N_k$
with $|N_k|=0$ and for any $j\in\N$ and any $g\in L^\infty(\O)$
$$
\lim_{k\to\infty}\sum_i
f_j(a_{ik})\int_{a_{ik}+\epsilon_{ik}\O}g(x)\,\md x= \int_\O
f_j(x)g(x)\,\md x\ .$$

\end{lemma}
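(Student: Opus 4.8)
The plan is to prove this Vitali-type lemma by combining a \emph{fine covering argument} with the \emph{Lebesgue differentiation theorem} and a diagonalisation in $k$; this is in essence the proof of \cite[Lemma~7.9]{pedregal}, adapted to the present formulation. Throughout, $\O$ is bounded (as in all our applications), so each rescaled copy $a+\epsilon\bar\O$ satisfies $a+\epsilon\bar\O\subset\bar B(a,R\epsilon)$ with $|a+\epsilon\bar\O|=\epsilon^n|\O|\ge c\,|\bar B(a,R\epsilon)|$ for constants $R,c$ depending only on $\O$; hence such families are ``regular'' with respect to balls and the Vitali covering theorem is available for them.

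First I would enlarge $N$ to a Lebesgue-null set $N'\supset N$ containing every point of $\O$ which fails to be a Lebesgue point of some $f_j$, $j\in\N$; this only discards a countable union of null sets. Then, for each fixed $k$ and each $a\in\O\setminus N'$, choose $\rho_k(a)\in(0,\infty)$ so small that $|a+\epsilon\O|^{-1}\int_{a+\epsilon\O}|f_j(x)-f_j(a)|\,\md x<1/k$ for all $\epsilon\le\rho_k(a)$ and all $j\le k$, which is possible because the sets $a+\epsilon\O$ shrink nicely to $a$ and $a$ is a Lebesgue point of each $f_j$. Consider the family
$$\mathcal{V}_k=\Big\{\,a+\epsilon\bar\O\ :\ a\in\O\setminus N',\ 0<\epsilon\le\min\big(r_k(a),\rho_k(a)\big),\ a+\epsilon\bar\O\subset\O\,\Big\}.$$
The key claim is that $\mathcal V_k$ is a \emph{fine} cover of $\O$ up to a null set: for a.e.\ $x\in\O$ and every $\eta>0$ there is $a+\epsilon\bar\O\in\mathcal V_k$ with $x\in a+\epsilon\bar\O$ and diameter $<\eta$. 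To see this, write $\O\setminus N'=\bigcup_m H^k_m$ with $H^k_m=\{a:\min(r_k(a),\rho_k(a))\ge 1/m\}$; almost every $x\in\O$ is a Lebesgue density point of some $H^k_m$ (each such $x$ lies in some $H^k_m$, and almost every point of a measurable set is one of its density points), and the null set of exceptions is absorbed into $N'$. For such $x$ and $\epsilon\le 1/m$ small, the set $x-\epsilon\bar\O$ lies in $\bar B(x,R\epsilon)$ and occupies a fixed positive fraction of its measure, so by the density-point property it must meet $H^k_m$; any $a\in H^k_m\cap(x-\epsilon\bar\O)$ then satisfies $a\in\O\setminus N'$, $\epsilon\le\min(r_k(a),\rho_k(a))$, $x\in a+\epsilon\bar\O$, and (for $\epsilon$ small, since $\O$ is open) $a+\epsilon\bar\O\subset\O$ with diameter $\epsilon\,\mathrm{diam}\,\bar\O<\eta$.

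With the claim in hand, the Vitali covering theorem yields a countable pairwise disjoint subfamily $\{a_{ik}+\epsilon_{ik}\bar\O\}_i\subset\mathcal V_k$ with $|\O\setminus\bigcup_i(a_{ik}+\epsilon_{ik}\bar\O)|=0$; since $|\partial\O|=0$, this upgrades to $\bar\O=\bigcup_i(a_{ik}+\epsilon_{ik}\bar\O)\cup N_k$ with $|N_k|=0$, and the \emph{open} copies $a_{ik}+\epsilon_{ik}\O$ are pairwise disjoint with union equal to $\O$ up to a null set. By construction $a_{ik}\in\O\setminus N'\subset\O\setminus N$ and $\epsilon_{ik}\le r_k(a_{ik})$. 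Finally, for any $j\in\N$, any $g\in L^\infty(\O)$ and any $k\ge j$,
\begin{align*}
\Big|\sum_i f_j(a_{ik})\int_{a_{ik}+\epsilon_{ik}\O}g\,\md x-\int_\O f_j g\,\md x\Big|
&=\Big|\sum_i\int_{a_{ik}+\epsilon_{ik}\O}\big(f_j(a_{ik})-f_j(x)\big)g(x)\,\md x\Big|\\
&\le \|g\|_{L^\infty(\O)}\sum_i\int_{a_{ik}+\epsilon_{ik}\O}|f_j(a_{ik})-f_j(x)|\,\md x\\
&\le \frac{\|g\|_{L^\infty(\O)}}{k}\sum_i|a_{ik}+\epsilon_{ik}\O|=\frac{\|g\|_{L^\infty(\O)}\,|\O|}{k},
\end{align*}
using in the first line that $\O$ is, up to a null set, the disjoint union of the $a_{ik}+\epsilon_{ik}\O$, and in the last line the choice of $\rho_k$. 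Since the right-hand side tends to $0$ as $k\to\infty$ and $f_j$ is taken into account as soon as $k\ge j$, this yields the desired limit for every $j\in\N$ and every $g\in L^\infty(\O)$.

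The step I expect to be the main obstacle is the fine-cover claim for $\mathcal V_k$: because $r_k$ is an \emph{arbitrary} positive function, one cannot just use rescaled copies of a uniformly small size, and one must instead run a Lebesgue density-point argument on the super-level sets of $\min(r_k,\rho_k)$ to produce, around almost every $x$, admissibly small copies $a+\epsilon\bar\O$ that simultaneously contain $x$, respect the size bound $\epsilon\le r_k(a)$, and are fine enough (through $\rho_k$) for the averaging estimate to close. Once this covering family is set up, the Vitali extraction and the concluding estimate are routine.
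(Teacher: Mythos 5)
Your proof is correct and follows essentially the same Vitali-covering-plus-Lebesgue-differentiation argument as Pedregal's Lemma~7.9, which the paper simply cites without reproducing. The one point worth tightening is that the density-point step for $H^k_m=\{a:\min(r_k(a),\rho_k(a))\ge 1/m\}$ implicitly uses measurability of $r_k$, which is not assumed; this is repaired by the standard outer-measure version of the Lebesgue density theorem (applied to a measurable hull of $H^k_m$), after which the fine-cover claim and the rest of the argument go through unchanged.
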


\ONE
In fact,  the points $\{a_{ik}\}$ can be chosen from the intersection of sets of   Lebesgue points of all $f_j$, $j\in\N$. Notice that  this intersection has
the full Lebesgue measure. Here for each $j\in\N$, $f_j$ is identified with its precise representative \cite[p.~46]{evans-gariepy}. We adopt this identification below whenever we speak about a value of an integrable function at a  particular point. 

\EEE

\
\bigskip

\noindent {\it Proof of Theorem~\ref{THM1} - sufficiency.} \mbox{}
Some parts of the proof follow 
\cite[Proof of Th.~6.1]{k-p1}. We are looking for a sequence $\{u_k\}_{k\in\N}\subset
W^{1,\infty,-\infty}_+(\O;\R^2)$  satisfying
$$
\lim_{k\to\infty} \int_\O v(\nabla u_k(x))g(x)\,\md
x=\int_{\O} \int_{\R^{2\times 2
n}}v(s)\nu_x(\md s)g(x)\,\md x\  $$ for
all $g\in \Gamma$ and any $v\in S$, where
$\Gamma$ and $S$ are countable dense subsets of $C(\bar\O)$ and
$C(\RrhoPlus)$, respectively.

First of all notice that, as $u\in W^ {1,\infty,-\infty}_+(\O;\R^2)$ from \eqref{firstmoment0} is differentiable in $\O$ outside a set of measure zero called $N$, we may find for every $a\in\O\setminus N$ and every $k > 0$ some $1/k>r_k(a)>0$ such that for any $0<\epsilon < r_k(a)$ we have for every $y\in\O$
\be\label{derivative}
\frac1\epsilon | u(a+\epsilon y)-u(a)-\epsilon \nabla u(a)y|\le \frac1k\ .
\ee
Applying Lemma~\ref{auxiliary} and using its notation, we can find $a_{ik}\in\O\setminus N$, $\epsilon_{ik}\le  r_k(a_{ik})$ such that for all $v\in S$ and all $g\in \Gamma$
\be\label{79} \lim_{k\to\infty}\sum_i \bar
V(a_{ik})g(a_{ik}) |\epsilon_{ik} \O|= \int_\O \bar
V(x)g(x)\,\md x\ ,\ee
where
$$\bar V(x)=\int_{\reg} v(s)\nu_x(\md s)\ .$$ 
In view of  Lemma~\ref{homocase}, we see  that $\{\nu_{a_{ik}}\}_{x\in\O}\in\Grad$ is a homogeneous gradient Young measure and we  call $\{\nabla y^{ik}_j\}_{j\in\N}\subset W^ {1,\infty,-\infty}_+(\O;\R^2)$  its generating sequence. We know that we can consider  $\{y^{ik}_j\}_{j\in\N}\subset \mathcal{U}^{\tilde\varrho}_{\nabla u(a_{ik})}$ for arbitrary $+\infty >\tilde\varrho>\varrho$. Hence

\be\label{imp14} 
\lim_{j\to\infty} \int_\O v(\nabla y_j^{ik}(x))g(x)\,\md x=\bar V(a_{ik})\int_\O g(x)\,\md x\ 
\ee
and, in addition, $y^{ik}_j$ weakly$^*$ converges to the map $x \mapsto \nabla u(a_{ik})x$ for $j\to\infty$ in $W^ {1,\infty}(\O;\R^2)$ and due to the Arzela-Ascoli theorem also uniformly on $C(\bar\O;\R^2)$. 

Further, consider for $k\in\mathbb{N}$ $y_k\in W^{1,\infty}(a_{ik}+\epsilon_{ik}\O;\R^2)$ defined  for  $x\in a_{ik}+\epsilon_{ik}\O$  by
$$
y_k(x): = 
 u(a_{ik})+\epsilon_{ik}y^{ik}_j\left(\frac{x-a_{ik}}{\epsilon_{ik}}\right) 
$$
where $j=j(k, i)$ will be chosen later. Note that the above formula defines  $y_k$  almost everywhere in $\O$.
We write for almost every $x\in a_{ik}+\epsilon_{ik}\O$  that 
\begin{eqnarray}\label{upscale}
|u(x)-y_k(x)|&\leq&\left |u(x)-u(a_{ik})-\epsilon_{ik}\nabla u(a_{ik})\left(\frac{x-a_{ik}}{\epsilon_{ik}}\right)\right|\nonumber\\
&+&\epsilon_{ik}\left|\nabla u(a_{ik})\left(\frac{x-a_{ik}}{\epsilon_{ik}}\right)-y^{ik}_j\left(\frac{x-a_{ik}}{\epsilon_{ik}}\right)\right|\leq \frac{2\epsilon_{ik}}{k}\ ,
\end{eqnarray}
if $j$ is large enough. The first term on the right-hand side is bounded by $\epsilon_{ik}/k$ because of \eqref{derivative} while the second one due to the \ONE uniform \EEE convergence of $y^{ik}_j\to x \mapsto \nabla u(a_{ik})x$. Notice that $y_k$ as well as $u$ are bi-Lipschitz and orientation preserving on   $a_{ik}+\epsilon_{ik}\O$.
If $x\in a_{ik}+\epsilon_{ik}\O$ we set $\tilde x= (x-a_{ik})/\epsilon_{ik}\in\O$ and define $\tilde u(\tilde x)=\epsilon_{ik}^{-1}u(a_{ik}+\epsilon_{ik}\tilde x)$ and $\tilde y_k(\tilde x)= \epsilon_{ik}^{-1}y_k(a_{ik}+\epsilon_{ik}\tilde x)$ so that we get by \eqref{upscale} for all $x\in\O$
$$
|\tilde u(\tilde x)-\tilde y_k(\tilde x)|\le \frac2k\ 
$$
Additionally, note that  the bi-Lipschitz constant of $\tilde y_k$, $k\in\N$ is again $L$.

Hence, we can take $k>0$ large enough that $\|\tilde u-\tilde y_k\|_{C(\bar\O;\R^2)}$ is arbitrarily small. Therefore, we can use Theorem~\ref{corollary-cut-off}  and modify $\tilde y_k$ so that it has the same trace as  $\tilde u$ on the boundary of  $\O$.  Let us call this modification $\tilde u_k$, i.e., 
$$
\tilde u_k(\tilde x)=\begin{cases}
 \tilde y_k(\tilde x) &\mbox{ if $x\in\O$},\\
  \tilde u(\tilde x) & \mbox{otherwise}.
\end{cases}
$$ 
Then we proceed in the opposite way to define for $x=a_{ik}+\epsilon_{ik}\tilde x$:  $u_k(x)= \epsilon_{ik}\tilde u_k(\tilde x)$.

 Then, since $\{u_k\}_{k\in\N}$ is bounded in $W^{1,\infty}(\O;\R^2)$, we  may assume the weak$^*$ convergence of $u_k$ to $u$.  It remains to show that every $u_k$ is bi-Lipschitz. To do so, we again apply Theorem~\ref{jmball}. We see that for every $k\in\mathbb{N}$ $\det\nabla u_k>0$. Further, $\sup_{k\in\N}|(\nabla u_k)^{-1}|<+\infty$ follows from construction of the sequence, and $u_k=u$ on $\partial\O$, so that $u_k$ is indeed bi-Lipschitz. 

For $k,i$ fixed we take $j=j(k,i)$ so large that for all
$(g,v)\in \Gamma\times S$
$$
\left|\epsilon_{ik}^2\int_\O g(a_{ik}+\epsilon_{ik}y)v(\nabla
u_j^{ik}(y))\,\md y -\bar
V(a_{ik})\int_{a_{ik}+\epsilon_{ik}\O}g(x)\,\md
x\right|\le\frac{1}{2^ik}\ .$$
 Using
this estimate and (\ref{imp14}) we get for any $(g,v)\in
\Gamma\times S$
\begin{eqnarray*}
\lim_{k\to\infty}\int_\O g(x)v(\nabla u_k(x))\,\md x &=& \lim_{k\to\infty}\sum_i\epsilon_{ik}^n\int_\O g(a_{ik}+\epsilon_{ik}y)v(\nabla u_j^{ik}(y))\,\md y\\
&=& \lim_{k\to\infty}\sum_i\bar
V(a_{ik})\int_{a_{ik}+\epsilon_{ik}\O}g(x)\,\md x=
\int_\O \bar V(x)g(x)\,\md x \\
&=& \int_\O\int_{\R^{2\times 2}}
v(s)\nu_x(\md s)g(x)\, \md x \ .
\end{eqnarray*}

\hfill$\Box$

\bigskip

\subsection{Proofs of Corollary~\ref{wlsc} and Proposition~\ref{minimizer}}

{\it Proof of Corollary~\ref{wlsc}.}
For showing the weak lower semicontinuity, we realize that the sequence $\{\nabla y_k\}_{k \in \mathbb{N}}$ generates a measure in $\GradPlus$ and so if $v$ is bi-quasiconvex we easily have from \eqref{qc0}
$$
 \int_\Omega v(\nabla y(x)) \md x  = \int_\Omega Z v(\nabla y(x)) \md x \le\int_\Omega \int_{\regplus} v(s)\nu_x(\md s) \d x= \liminf_{k \to \infty} \int_\Omega v(\nabla y_k) \md x.
$$

On the other hand, we realize that  every $y\in W^{1,\infty,-\infty}_A(\O;\R^2)$ defines a  homogeneous Young measure $\nu\in\mathcal{GY}^{\infty,-\infty}_+(\O;\R^{2\times 2})$ by setting
$$\int_{\R^{2\times 2}}f(s)\nu(\d s)=|\O|^{-1}\int_\O f(\nabla y(x))\,\d x\ $$
for every $f$ continuous on matrices with positive determinant.

 Notice that the first moment of $\nu$ is $A$. Let $\{\nabla y_k\}_{k \in \mathbb{N}}$ be a generating sequence for $\nu$   which can be taken such that $\{y_k\}_{k \in \mathbb{N}} \subset  W^{1,\infty,-\infty}_A(\O;\R^2)$.  Moreover, the weak* limit of $\nabla y_k$ is $A$.  As we assume that $I(y)=\int_\O v(\nabla y(x))\,\d x$ and that $I$ is weakly$^*$ lower semicontinuous on  $W^{1,\infty,-\infty}_A(\O;\R^2)$ we get

$$|\O|v(A)\le \liminf_{k\to\infty}I(y_k)=\int_\Omega \int_{\R^{2\times 2}}v(s)\nu(\d s) \d x = \int_\O v(\nabla y(x))\,\d x\ ,$$
which shows that $v$ is bi-quasiconvex.
\hfill$\Box$

\bigskip

{\it Proof of Proposition~\ref{minimizer}.}
Notice that $u_0\in\mathcal{A}$ so that the admissible set is nonempty. Let $\{u_k\}_{k\in\N}\subset\mathcal{A}$ be a minimizing sequence for $I_\varepsilon$, i.e., 
$\lim_{k\to\infty} I_\varepsilon(u_k)=\inf_\mathcal{A}I_\varepsilon\ge 0$.  
Hence, $\|\nabla u\|_{L^\infty(\O;\R^{2\times 2})}\le C$ and $\|\nabla u^{-1}\|_{L^\infty(u_0(\O);\R^{2\times 2})}\le C$ for some finite $C>0$. Applying a Poincar\'{e} inequality we get that $\{u_k\}$ is  bounded in $W^{1,\infty,-\infty}_+(\O;\R^2)$. Therefore, there is a  subsequence converging weakly* to some $u\in W^{1,\infty,-\infty}_+(\O;\R^2)$.
Compactness of the trace operator ensures that $u=u_0$ on the boundary of $\O$. Consequently, $u\in\mathcal{A}$ and weak* lower semicontinuity of $I_\varepsilon$ finishes the argument. Indeed, 
as $v$ is bi-quasiconvex  the weak* lower semicontinuity of the first two terms is obvious. The last term is weak* lower semicontinuous  in view of Remark~\ref{convergence}.
\hfill $\Box$

\section{Cut-off technique preserving the bi-Lipschitz property}
\label{sect-cutOff}
One of the main steps in the characterization of gradient Young measures \cite{k-p1,pedregal} is to show that having a bounded sequence $\{y_k\}_{k\in\N}\subset W^{1,\infty}(\O;\R^2)$, such that it converges weakly$^*$ to $y(x):\O\mapsto\R^2$  and $\{\nabla y_k\}$ generates  a Young measure $\nu$, then there is a modified sequence $\{u_k\}_{k\in\N}\subset W^{1,\infty}(\O;\R^2)$, $u_k(x)=y(x)$  for $x\in\partial\O$ and $\{\nabla u_k\}$ still generates $\nu$.  Standard proofs of this fact use a cut-off technique based on convex combinations near the boundary; due to the non-convexity of our constraints, however, this could destroy the bi-Lipschitz property, so it is not at all suitable for our purposes. Therefore, we resort to a different approach borrowing from recent results by S.~Daneri and A.~Pratelli \cite{daneri-pratelli-extension,daneri-pratelli}. More precisely, the following theorem is a main ingredient of our approach.

 \begin{theorem}\label{cut-off}
 
 Let $\O\subset\R^2$ be a bounded Lipschitz domain, let $\mathrm{diam}\ \Omega>>\delta > 0$  and $L\ge 1$ be fixed.  Then there exists $\varepsilon > 0$ that is only dependent on $\delta$ and $L$ such that if $\tilde y, y \in W^{1,\infty, -\infty}_+(\O;\R^2)$ are L-bi-Lipschitz maps satisfying 
 $$
 \|\tilde y-y\|_{C(\overline{\Omega};\R^2)} \leq \varepsilon(\delta, L),
 $$
then  we can find another $\bar{c}(L)$-bi-Lipschitz map $u \in  W^{1,\infty, -\infty}_+(\O;\R^2)$ satisfying $u=y$ on $\partial\O$ and 
  $|\{x\in\O;\, \nabla u(x)\ne\nabla \tilde y(x)\}| \leq \delta$. 
 \end{theorem}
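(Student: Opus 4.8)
The plan is to keep $u=\tilde y$ on the bulk of $\O$, force $u=y$ on $\partial\O$, and perform the transition inside a thin collar of $\partial\O$ whose area is at most $\delta$; the transition map is built first as a map on a one–dimensional grid and only then completed to a planar map by the square bi–Lipschitz extension Theorem~\ref{extensionTheorem}. Global injectivity of the resulting $u$ is not verified by hand but is recovered at the very end from Ball's Theorem~\ref{jmball}. \emph{Setup.} Since $\O$ is a bounded Lipschitz domain, the collar $N_h=\{x\in\O:\ \mathrm{dist}(x,\partial\O)<2h\}$ satisfies $|N_h|\le c_\O h$, so I fix $h\sim\delta/c_\O$, which yields $|N_h|\le\delta$ and, using $\mathrm{diam}\,\O\gg\delta$, keeps $h$ much smaller than $\mathrm{diam}\,\O$. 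Via a bi–Lipschitz flattening of $\partial\O$ on finitely many boundary patches, I partition $N_h$ into cells $\widetilde Q_i$ that become genuine squares of side $\sim 2h$ after flattening; each has an \emph{outer} side on $\partial\O$, an \emph{inner} side on $\gamma_h:=\{\mathrm{dist}(\cdot,\partial\O)=2h\}$, and two \emph{transversal} sides shared with neighbouring cells.

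\emph{Values on the $1$–skeleton.} On $\O\setminus N_h$ and on every inner side I set $u=\tilde y$, and on every outer side $u=y$ (so $u=y$ on $\partial\O$). On each transversal side I prescribe $u$ to be a bi–Lipschitz curve joining the value $\tilde y(\cdot)$ at its inner endpoint to $y(\cdot)$ at its outer endpoint. The crucial point is that, after this, every closed curve $u|_{\partial\widetilde Q_i}$ is a Jordan curve that is bi–Lipschitz with constant $\bar c_0(L)$ depending only on $L$: on the inner and outer sides this is automatic because there $u$ equals the $L$–bi–Lipschitz maps $\tilde y$ and $y$, so the content is to pick the transversal interpolation so that (a) its image has length comparable, within factors of $L$, to the side length $\sim 2h$ — which uses $L$–bi–Lipschitzness of $\tilde y,y$ together with $\|\tilde y-y\|_{C(\overline\O)}\le\varepsilon\ll h$ to control $|\tilde y(p_{\mathrm{in}})-y(p_{\mathrm{out}})|$ — and (b) at each endpoint the transversal arc leaves the common point along a direction that makes an angle bounded below (in terms of $L$) with the tangent of the $\tilde y$–, resp. $y$–arc meeting it there, which is what prevents cusps. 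Property (b) is arranged by letting the transversal arc depart along $D\tilde y[\nu]$ and arrive along $Dy[\nu]$ ($\nu$ the transversal direction): these differ from the relevant edge tangents by the image under a $\le L$ bi–Lipschitz matrix, hence by a definite angle, and choosing $\varepsilon$ small enough in terms of $\delta$ and $L$ guarantees the actual departure/arrival directions deviate from these model ones only negligibly. The same smallness of $\varepsilon$ and of the cells keeps the arcs and transversal curves of neighbouring cells pairwise non-crossing, and orientation–preservation of $\tilde y$ forces each $u|_{\partial\widetilde Q_i}$ to be positively oriented.

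\emph{Extension and conclusion.} Applying Theorem~\ref{extensionTheorem} (through the rescaled–square remark and the flattening charts) to each $u|_{\partial\widetilde Q_i}$ produces an orientation–preserving, $C\,\bar c_0(L)^4$–bi–Lipschitz extension onto $\widetilde Q_i$; glued with $u=\tilde y$ on $\O\setminus N_h$ — the pieces agree on every shared side — this defines $u\in W^{1,\infty}(\O;\R^2)$ with $\det\nabla u>0$ a.e., $\|\nabla u\|_{L^\infty}+\|(\nabla u)^{-1}\|_{L^\infty}\le\bar c(L)$, with $u=y$ on $\partial\O$, and with $\{x\in\O:\ \nabla u(x)\ne\nabla\tilde y(x)\}\subset N_h$, hence of measure at most $\delta$. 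Finally $u$ is continuous on $\overline\O$, agrees on $\partial\O$ with the homeomorphism $y$ (whose image is again a bounded Lipschitz domain), has positive Jacobian a.e., and satisfies \eqref{integral-Cond} trivially since $(\nabla u)^{-1}\in L^\infty$; hence Theorem~\ref{jmball} shows that $u$ is a homeomorphism of $\O$ onto $y(\O)$ with $u^{-1}\in W^{1,\infty}$. Together with the uniform gradient bounds this makes $u$ a $\bar c(L)$–bi–Lipschitz element of $W^{1,\infty,-\infty}_+(\O;\R^2)$, as required.

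I expect the main obstacle to be the second step, specifically the lower bi–Lipschitz bound for the cell–boundary maps near the vertices where a transversal arc meets an inner or outer arc: because $\tilde y$ and $y$ are only $C^0$–, not $C^1$–close, the two arcs emanating from such a vertex are governed by the a priori unrelated matrices $D\tilde y$ and $Dy$, so the transversal arc must be built to interface correctly with \emph{each} of them, and this interplay is exactly what forces the quantitative dependence $\varepsilon=\varepsilon(\delta,L)$ and requires the careful one–dimensional bookkeeping.
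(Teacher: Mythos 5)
Your overall architecture—tile a thin collar near $\partial\O$, prescribe $u$ on the $1$-skeleton of the tiling with $\tilde y$ on the inner boundary and $y$ on the outer boundary, extend cell-by-cell via Theorem~\ref{extensionTheorem}, then glue—matches the paper's strategy closely. The genuine gap is in the step you yourself flag as the main obstacle: the lower bi-Lipschitz bound for $u|_{\partial \widetilde Q_i}$ near the vertices where a transversal side meets an inner or outer side. Your fix is to steer the transversal arc by the ``departure directions'' $D\tilde y[\nu]$ and $Dy[\nu]$ and argue that the angles at the corners stay bounded away from zero. This cannot be made rigorous as stated: $\tilde y$ and $y$ are merely $L$-bi-Lipschitz, so the directional derivatives at the specific grid vertices need not exist (and where they do exist they control only infinitesimal behaviour, not the finite-scale geometry on which the constants must depend); more importantly, $\|\tilde y-y\|_{C^0}\le\varepsilon$ gives no control whatsoever on $D\tilde y$ versus $Dy$, so matching slopes at the two endpoints of a transversal side is not something one can ``arrange.'' Also, checking that each $u|_{\partial\widetilde Q_i}$ has no cusp at the corners is not sufficient for a uniform two-sided Lipschitz bound on the closed curve: one still needs a lower estimate on $|u(z)-u(z')|$ when $z$ lies on an inner/outer side near a corner $p$ and $z'$ on the adjacent transversal side near $p$, and the naive triangle inequality degenerates there.

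The paper sidesteps these difficulties by not interpolating along whole transversal sides at all. Instead, the only places where the map is modified are small ``boundary crosses'' $Z_\alpha$ centred at the grid vertices $w_\alpha$ lying on a separating collection of edges $\Gamma$; the arms of each cross have lengths $\xi_\alpha^i r$ chosen so that the four endpoints $p_\alpha^i$ all map (under $\tilde y$ or $y$, as appropriate) to points at \emph{exactly} the distance $r/(4L)$ from $y(w_\alpha)$, and on each arm $u_\delta$ is \emph{affine}, sending $w_\alpha\mapsto y(w_\alpha)$ and $p_\alpha^i\mapsto \tilde y(p_\alpha^i)$ or $y(p_\alpha^i)$. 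The combination of affineness plus the equal-radius normalisation is exactly what makes the lower bound quantitative: for $z,z'$ on two different arms one compares to the auxiliary point $z''$ at the same ``radius'' as $z$ and uses the obtuse-triangle estimate (Remark~\ref{ObtuseTriangle}); for $z$ inside a cross and $z'$ outside one uses the ball-separation estimate (Remark~\ref{BallArgument}) with the ball $\mathcal{B}(y(w_\alpha);\frac{r}{4L})$; the bound $\xi_\alpha^i\in[1/(6L^2),1/3]$ (from $\eqref{LargeK}$ and bi-Lipschitzness) keeps the affine slopes between $1/(2L)$ and $2L$. None of this uses pointwise derivatives, only the finite-scale $L$-bi-Lipschitz inequalities, which is what makes it work for Lipschitz data. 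Finally, the paper does not rely on Ball's Theorem~\ref{jmball} for injectivity of $u$ (that would require knowing $y(\O)$ satisfies a cone condition, which a bi-Lipschitz image of a Lipschitz domain need not); instead it notes directly that $u(\O_\mathrm{bulk})$, $u(\O_r)$, $u(\O_\mathrm{bound})$ are mutually disjoint because the extension restricted to each square boundary agrees with the maps $\tilde y$ and $y$ on the appropriate edges.
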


 The  following corollary allows us to modify convergent sequences at the boundary of $\O$.

\begin{corollary}\label{corollary-cut-off}
Assume that $\{y_k\}_{k\in\N}\subset W^{1,\infty,-\infty}_+(\O;\R^2)$ is a sequence of $L$-bi-Lipschitz maps and $y_k\stackrel{*}{\rightharpoonup} y$ in $W^{1,\infty,-\infty}_+(\O;\R^2)$ as $k\to\infty$. Then there is a  subsequence of $\{y_{k_n}\}_{n\in\N}$ and  $\{u_{k_n}\}_{n\in\N}\subset W^{1,\infty,-\infty}_+(\O;\R^2)$ bounded  such that 
$u_{k_n}\stackrel{*}{\rightharpoonup} y$  in $W^{1,\infty,-\infty}_+(\O;\R^2)$ as $n\to\infty$,  for all $n\in\N$ $u_{k_n}=y$ on $\partial\O$ and 
$\lim_{n\to\infty}|\{x\in\O;\, \nabla u_{k_n}\ne\nabla y_{k_n}\}|\to 0$. In particular, the sequences $\{\nabla y_{k_n}\}$ and $\{\nabla u_{k_n}\}$ generate the same Young measure.

\end{corollary}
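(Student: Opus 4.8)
The plan is to derive Corollary~\ref{corollary-cut-off} from Theorem~\ref{cut-off} by a diagonal argument combined with the standard fact that modifications on vanishingly small sets do not change the generated Young measure. First I would fix a sequence $\delta_n \searrow 0$ and, for each $n$, apply Theorem~\ref{cut-off} with $L$ equal to the common (uniform) bi-Lipschitz constant of the sequence $\{y_k\}$ and with $\delta = \delta_n$; this produces a threshold $\varepsilon(\delta_n, L) > 0$. Since $y_k \wstar y$ in $W^{1,\infty,-\infty}_+(\O;\R^2)$, the gradients are uniformly bounded and, by Arzel\`a--Ascoli, $y_k \to y$ strongly in $C(\overline\O;\R^2)$ (at least along a subsequence, but in fact the whole sequence once the limit is identified). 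Hence for each $n$ there is an index $k_n$, which I may take strictly increasing, such that $\|y_{k_n} - y\|_{C(\overline\O;\R^2)} \le \varepsilon(\delta_n, L)$. Note that $y$ itself is $L$-bi-Lipschitz by passing to the limit in \eqref{bi-li}, so both $\tilde y = y_{k_n}$ and $y$ satisfy the hypotheses of Theorem~\ref{cut-off}.

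Next, for each $n$ I apply Theorem~\ref{cut-off} with $\tilde y = y_{k_n}$ and obtain a $\bar c(L)$-bi-Lipschitz map $u_{k_n} \in W^{1,\infty,-\infty}_+(\O;\R^2)$ with $u_{k_n} = y$ on $\partial\O$ and $|\{x \in \O : \nabla u_{k_n}(x) \ne \nabla y_{k_n}(x)\}| \le \delta_n$. The bound $\bar c(L)$ is independent of $n$, so $\{u_{k_n}\}$ is bounded in $W^{1,\infty,-\infty}_+(\O;\R^2)$; extracting a further subsequence if needed, $u_{k_n} \wstar w$ in $W^{1,\infty}(\O;\R^2)$ for some $w$. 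To identify $w = y$: since $\|u_{k_n} - y_{k_n}\|_{L^\infty}$ need not be small a priori, I instead observe that $u_{k_n} - y_{k_n}$ vanishes on $\partial\O$ and its gradient differs from zero only on the set $E_n := \{\nabla u_{k_n} \ne \nabla y_{k_n}\}$ of measure $\le \delta_n$, with $\|\nabla(u_{k_n} - y_{k_n})\|_{L^\infty} \le L + \bar c(L)$; by the Poincar\'e inequality (or directly, integrating along segments from the boundary) $\|u_{k_n} - y_{k_n}\|_{L^\infty} \to 0$ as $\delta_n \to 0$. Combined with $y_{k_n} \to y$ uniformly, this gives $u_{k_n} \to y$ in $C(\overline\O;\R^2)$, hence $w = y$ and the whole selected subsequence converges weakly* to $y$.

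Finally, I verify that $\{\nabla y_{k_n}\}$ and $\{\nabla u_{k_n}\}$ generate the same Young measure. Both sequences are bounded in $L^\infty(\O;\R^{2\times 2})$ (by $L$ and $\bar c(L)$ respectively), so up to a subsequence they generate Young measures $\nu$ and $\mu$. For any $v \in C_0(\R^{2\times 2})$ and any $g \in L^\infty(\O)$,
\begin{equation*}
\left| \int_\O g(x)\bigl(v(\nabla u_{k_n}(x)) - v(\nabla y_{k_n}(x))\bigr)\,\md x \right| \le 2\|g\|_{L^\infty}\|v\|_{\infty}\, |E_n| \le 2\|g\|_{L^\infty}\|v\|_{\infty}\,\delta_n \to 0,
\end{equation*}
so the two sequences have the same weak* limits when tested against $v$, forcing $\nu = \mu$. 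Since $\{\nabla y_{k_n}\}$ is a subsequence of the original generating sequence, it generates the same Young measure as $\{\nabla y_k\}$, completing the proof. I expect the only genuinely delicate point to be the identification of the weak* limit of $\{u_{k_n}\}$ as $y$: the cut-off theorem controls the gradient modification set but not the $L^\infty$-distance between $u_{k_n}$ and $y_{k_n}$ directly, so one must argue via the common boundary values and the smallness of $|E_n|$ rather than a naive triangle inequality; everything else is routine bookkeeping with the Young measure definition.
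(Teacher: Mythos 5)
Your proposal follows the same route as the paper: choose $\delta_n \searrow 0$, use the uniform convergence $y_k \to y$ in $C(\overline\O;\R^2)$ to pick indices $k_n$ meeting the threshold $\varepsilon(\delta_n,L)$ of Theorem~\ref{cut-off}, and apply the theorem with $\tilde y = y_{k_n}$. The paper's own proof is exactly this diagonal argument and leaves the remaining verifications implicit; you spell them out.

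However, there is one factual slip in your identification of the weak* limit. You assert that $u_{k_n} - y_{k_n}$ vanishes on $\partial\O$. In fact Theorem~\ref{cut-off} gives $u_{k_n} = y$ on $\partial\O$ (as the corollary itself records), so $u_{k_n} - y_{k_n} = y - y_{k_n}$ on $\partial\O$, which is only small (bounded by $\varepsilon(\delta_n,L)\to 0$), not zero. In addition, ``integrating along segments from the boundary'' is more delicate than suggested: smallness of $|E_n|$ does not control the one-dimensional length of $E_n$ along an arbitrary segment, so the pointwise estimate does not follow directly. A cleaner argument avoiding both issues: since $\nabla u_{k_n} - \nabla y_{k_n}$ is uniformly bounded in $L^\infty$ and supported on $E_n$ with $|E_n|\to 0$, it tends to $0$ in $L^1(\O;\R^{2\times 2})$, so $\nabla u_{k_n}$ and $\nabla y_{k_n}$ share the same weak* limit in $L^\infty$, namely $\nabla y$. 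Pass to a subsequence so that $u_{k_n} \wstar w$ in $W^{1,\infty}(\O;\R^2)$ and hence $u_{k_n}\to w$ uniformly (Arzel\`a--Ascoli); then $\nabla w = \nabla y$, and $u_{k_n}=y$ on $\partial\O$ gives $w = y$ on $\partial\O$, so $w=y$. The limit being identified independently of the subsequence, the whole sequence $\{u_{k_n}\}$ converges. Your Young-measure identification step is correct as written.
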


\bigskip

\begin{proof}
Let   $\{\delta_n\}_{n\in\N}$ be a sequence of positive numbers converging to zero as $n\to\infty$. We  apply Theorem~\ref{cut-off} and uniform convergence of $\{y_k\}_{k\in\N}$ to $y$ in $C(\bar\O;\R^2)$  to find  $\{\varepsilon_n(\delta_n,L)\}_{n\in\N}$  and $\{y_{k_n}\}_{n\in\N}$ such that  $\|y_{k_n}-y\|_{C(\bar\O;\R^2)}\le \varepsilon_n(\delta_n,L)$. Use Theorem~\ref{cut-off}  with $\tilde y:=y_{k_n}$  to obtain  $u_{k_n}\in W^{1,\infty,-\infty}_+(\O;\R^2)$ with the mentioned properties.   
\end{proof}

{\it Proof of Thm.~\ref{cut-off}}
{
We devote the rest of this section to proving Theorem \ref{cut-off},  large parts of the proof, collected in its third section, are rather technical. Therefore, we start with an overview of the proof:

\vspace{2ex}

\noindent \textit{\textbf{Section 1 of the proof: Overview}}\\
We define the open set
$$
\Omega^\delta = \big\{x \in \Omega: \mathrm{dist}(x,\partial\O) < \delta \big\};
$$ 
now, we find $r = r(\delta)$ and a corresponding, suitable \emph{$r(\delta)$-tiling of $\Omega^\delta$}, i.e.\ a \emph{finite} collection of closed squares 
\begin{equation}
\Omega_r = \bigcup_{i=1}^N \mathcal{D}(z_i, r) \qquad \quad \text{with $z_i \in \Omega^\delta$} 
\label{tiling}
\end{equation}
that satisfies that $\Omega_r \subsetneq \Omega^\delta$ and that two squares have in common \emph{only} either a whole edge or a vertex. Furthermore, we require the tiling to be fine enough so that there exists a \emph{collection of edges $\Gamma$} satisfying the following properties:
\begin{itemize}
\item every continuous path connecting two points $x_1$ and $x_2$ such that $x_1 \in \partial\Omega$ and $x_2 \in \partial \Omega^\delta \setminus \partial \Omega$ crosses $\Gamma$,
\item $\Gamma \subset \mathrm{int} \, \Omega_r$.
\end{itemize}}
 This setting is best imagined in the case when $\Omega$ is simply connected. Then, $\Omega_r$ forms a thin strip of squares near the boundary and $\Gamma$ is a closed curve consisting of edges \emph{in the interior} of this strip. We will refer to the special case of a simple connected domain for a better imagination of the introduced concepts at several places bellow; nevertheless, simple connectivity of $\Omega$ is never explicitly used and, in fact, not needed. 

Further, we separate $\Omega$ into three parts:
$$
\Omega = \Omega_\mathrm{bulk} \cup \Omega_r \cup \Omega_\mathrm{bound},
$$
where 
\begin{align*}
\Omega_\mathrm{bulk} &= \{x \in \Omega \setminus \Omega_r: \text{every continuous path from $x$ to $\partial \Omega$ crosses $\Omega_r$}.\} \\
\Omega_\mathrm{bound} &= \Omega^\delta\setminus(\Omega_\mathrm{bulk} \cup \Omega_r).
\end{align*}
Let us again, for a moment, think of a simply connected $\Omega$. Then, $\Omega_\mathrm{bulk}$ forms the interior of the domain, $\Omega_r$ is the thin strip of squares and $\Omega_\mathrm{bound}$ is also a strip that reaches up to $\partial \Omega$ and is not tiled.

With these basic notations set, we explain how we construct the cut-off.  Let us choose $\varepsilon = \frac{r(\delta)}{12L^3}$ so that we have that
\begin{equation}
\|\tilde y-y\|_{C(\overline{\Omega};\R^2)} \leq \frac{r(\delta)}{12L^3}.
\label{LargeK}
\end{equation}
Now, we alter $\tilde y$ on $\Omega_r$ to obtain the function $u_\delta: \Omega_r \to \mathbb{R}^2$ that has the property that $[u_\delta]_{\mid_{\partial \Omega_r\cap \partial \Omega_\mathrm{bulk}}} = \tilde y$ and $[u_\delta]_{\mid_{\partial \Omega_r\cap \partial \Omega_\mathrm{bound}}}=y$. If we think once more of simple connected $\Omega$, this means that on the inner boundary of $\Omega_r$ we obtain the function $\tilde y$ while on the outer boundary we already have the sought boundary data. 

We will give a precise definition of $u_\delta$ in the next section of the proof. In fact, in view of the available extension Theorem \ref{extensionTheorem}, it is sufficient to give a definition of $u_\delta$ on all the edges in $\Omega_r$, which we will exploit. Namely, on the edges the ``fitting'' of $\tilde y$ to $y$ is essentially one-dimensional and hence our technique will be essentially a linear interpolation. 

 In the third section of the proof, which is the most technical one, we then show that $u_\delta$, thus so far defined only on the edges, is $18L$-bi-Lipschitz (cf. \eqref{biL}) and so extending it to $\Omega_r$ via Theorem \ref{extensionTheorem} will yield a $\bar{c}(L)$-bi-Lipschitz function $u_\delta: \Omega_r \to \mathbb{R}^2$ having the above described properties. \ONE Indeed, $\partial u_\delta(\mathcal{D}(z_i,r))= u_\delta(\partial \mathcal{D}(z_i,r))$ for all admissible $i$, so that $u_\delta:\O_r\to\R^2$ is injective.  \EEE 

Therefore, we may define 
$$
u(x) = \begin{cases}
\tilde y(x) & \text{on $\Omega_\mathrm{bulk},$} \\
u_\delta(x) & \text{on $\Omega_r$} ,\\
y(x) & \text{on $\Omega_\mathrm{bound}$,}
\end{cases}
$$
It is obvious that the obtained mapping is Lipschitz and satisfies \WE $|\nabla u(x)^{-1}| > c(L)$ \EEE  a.e.~on $\Omega$. The injectivity  of \WE $u$ \EEE follows from the fact that $u(\Omega_\mathrm{bulk}), u(\Omega_\mathrm{bound})$ and $u(\Omega_r)$ are mutually disjoint, which is a consequence of the ``fitting'' boundary data through $[u_\delta]_{\mid_{\partial \Omega_r\cap \partial \Omega_\mathrm{bulk}}} = \tilde y$ and $[u_\delta]_{\mid_{\partial \Omega_r\cap \partial \Omega_\mathrm{bound}}}=y$. Thus, the mapping $u$ is globally bi-Lipschitz and hence orientation preserving since it preserves orientation on $\Omega_\mathrm{bulk}$.

\begin{figure}[h!]
\centering
\subfigure[r-tiling of the set $\Omega^\delta$]{\includegraphics[width = 0.35\paperwidth]{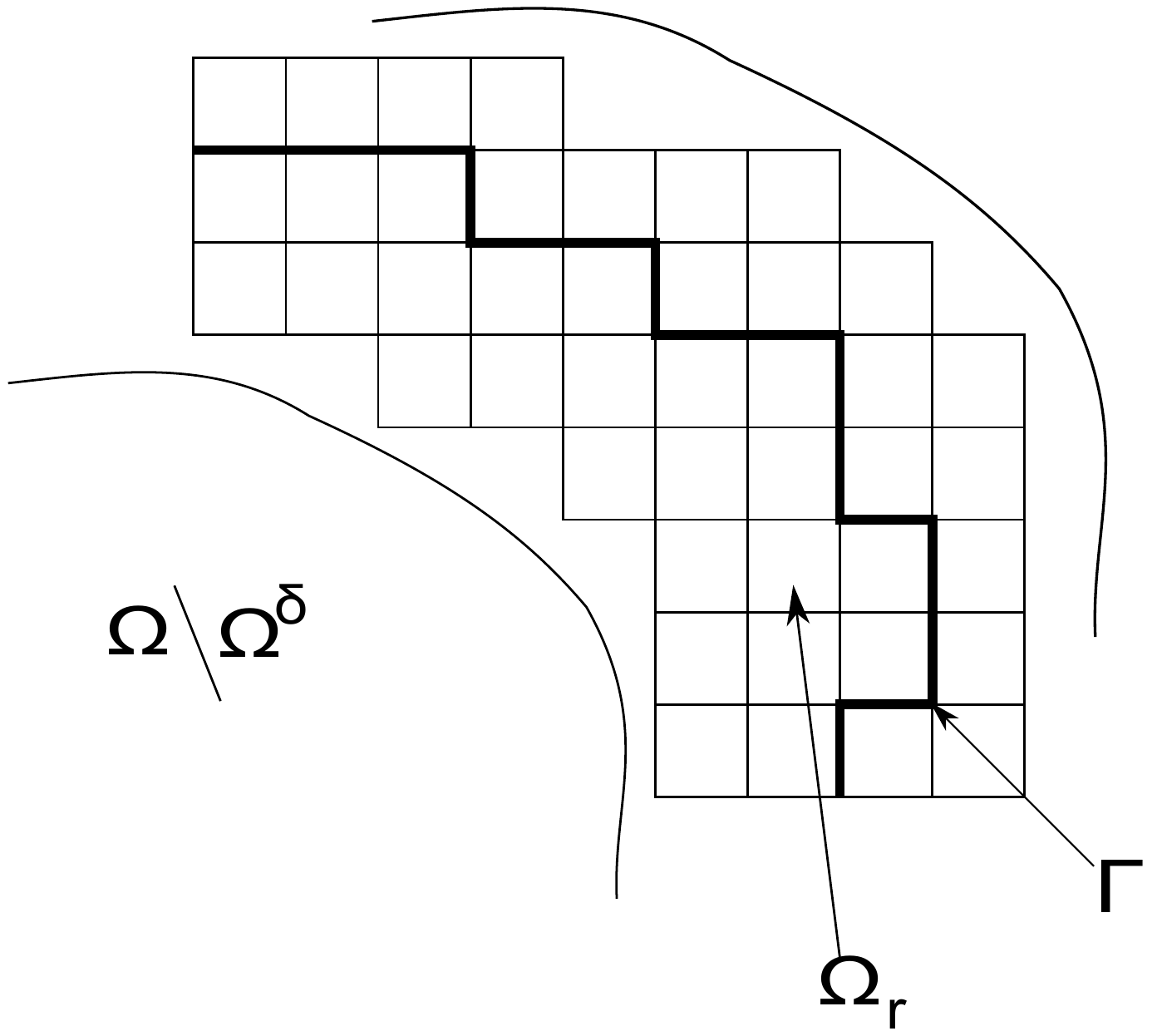}}
\subfigure[Detail of cross on $\Gamma$]{\includegraphics[width = 0.35\paperwidth]{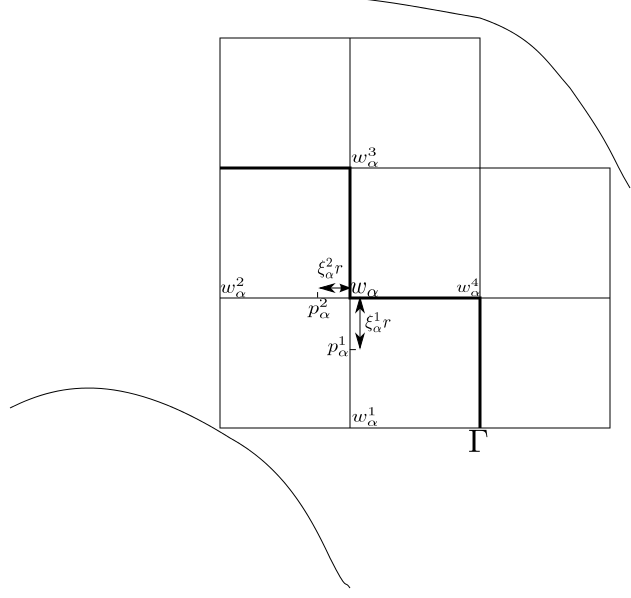}} 
\caption{Tiling near boundary and detail of one cross}
\label{Fig_CutOFF}
\end{figure}

\vspace{2ex}

\noindent \textit{\textbf{Section 2 of the proof: Partitioning of the grid and definition of $u_\delta$}}\\
In this section we give a precise definition of $u_\delta(x)$ on the  \emph{grid} of the tiling $\Omega_r$, denoted $\mathcal{Q}$, which consists of all edges of $\Omega_r$; in other words,
$$
\mathcal{Q} = \bigcup_{i=1}^N \partial \mathcal{D}(z_i, r) \qquad \quad \text{with $z_i$ as in \eqref{tiling}}. 
$$
Clearly, $\Gamma \subset \mathcal{Q}$ and we divide $\mathcal{Q}$ into two other parts 
$$
\mathcal{Q} = \mathcal{Q}^\mathrm{outer} \cup \Gamma \cup \mathcal{Q}^\mathrm{inner},
$$
defined through

\begin{align}
\mathcal{Q}^\mathrm{inner} &=\big \{x \in \mathcal{Q}\setminus \Gamma; \text{every continuous path connecting $x$ to $\partial \Omega$ crosses $\Gamma$}\}, \\
\mathcal{Q}^\mathrm{outer} &= \mathcal{Q}\setminus(\Gamma \cup \mathcal{Q}^\mathrm{inner}).
\end{align}
 The names of these two other parts are lent from the situation when $\Omega$ is simply connected;  namely, then $\mathcal{Q}^\mathrm{inner}$ corresponds to those edges that are ``further away'' from the boundary than $\Gamma$ and so in the ``interior'' while $\mathcal{Q}^\mathrm{outer}$ are the edges in the exterior. Nevertheless, as already stressed above, simple-connectivity of $\Omega$ is not needed.

For further convenience, we shall fix some notation (in accord with \cite{daneri-pratelli}); see also Figure \ref{Fig_CutOFF}(b).  We shall denote 
\begin{itemize}
\item $w_\alpha$ any vertex of the grid $\mathcal{Q}$ that lies on  $\Gamma$,
\item for any $w_\alpha$ we denote $w_\alpha^i$ all vertices that are at distance of $r$ to $w_\alpha$; note that from construction there always exist 4 such vertices (as $w_\alpha$ cannot lie on the boundary of $\Omega_r$),
\item for any $w_\alpha$ the  largest numbers $\xi_\alpha^i > 0$ that satisfy 
\begin{align*}
\big|\tilde y\big(w_\alpha+\xi_\alpha^i(w_\alpha^i - w_\alpha)\big)- y(w_\alpha) \big| &= \frac{r}{4L} & &\text{if the edge $w_\alpha w_\alpha^i \subset \mathcal{Q}^\mathrm{inner}$}, \\
\big|y\big(w_\alpha+\xi_\alpha^i(w_\alpha^i - w_\alpha)\big)- y(w_\alpha) \big| &= \frac{r}{4L} & &\text{else};
\end{align*}

\item we call the ``boundary cross'' the set 
$$
Z_\alpha = \bigcup_{i=1}^{4} \big\{w_\alpha+ t (w_\alpha^i-w_\alpha) : 0 \leq t \leq \xi_\alpha^i \big\}
$$
and denote the extremals of this cross $p_\alpha^1 \ldots p_\alpha^4$.
\end{itemize}

 It is due to the $L$-bi-Lipschitz property of $\tilde y$ and $y$ as well as \eqref{LargeK} that all the concepts above are well defined. In particular, we can assure that
\begin{equation}
\text{the numbers $\xi_\alpha^i$ can be found in the interval $[1/(6L^2),1/3]$,} \label{numbers_xi} 
\end{equation}
so that the boundary crosses are mutually disjoint. We postpone the proof of \eqref{numbers_xi} until the end of this section.

Now, we are in the position to define the sequence $u_{k\delta}(x)$ on $\mathcal{Q}$ as follows: first, we define $u_{\delta}(x)$ everywhere in $\mathcal{Q}$ except for the boundary crosses:
$$
u_\delta(x) = \begin{cases} \tilde y(x) & \text{if $x \in \mathcal{Q}^\mathrm{inner}\setminus (\bigcup_\alpha Z_\alpha),$} \\
y(x) & \text{if $x \in (\mathcal{Q}^\mathrm{outer}\cup \Gamma)\setminus (\bigcup_\alpha Z_\alpha);$}
\end{cases}
$$
while on the cross the $u_\delta$ will be continuous and piecewise affine, i.e. 
$$
u_\delta(w_\alpha + t(w_\alpha^i- w_\alpha)) = \begin{cases}
y(w_\alpha) + \frac{t}{\xi_\alpha^i }\Big(\tilde y(p_\alpha^i)- y(w_\alpha) \Big) & \text{if $w_\alpha w_\alpha^i \subset \mathcal{Q}^\mathrm{inner}$ and $t \in [0,\xi_\alpha^i ]$,} \\
y(w_\alpha) + \frac{t}{\xi_\alpha^i }\Big(y(p_\alpha^i)- y(w_\alpha) \Big) & \text{if $w_\alpha w_\alpha^i \not \subset \mathcal{Q}^\mathrm{inner}$ and $t \in [0,\xi_\alpha^i ].$} 
\end{cases}
$$

The rough idea behind this construction is that the matching, or the cut-off, actually happens on the boundary crosses where we, on each edge, replace $\tilde y$ as well as $y$ by an affine map. By adjusting the slopes of these affine replacements we get a continuous piecewise affine, and hence bi-Lipschitz, map on the cross. What we need to show are then, essentially, the following two properties of such a replacement: it connects in a bi-Lipschitz way to $u_{\delta}$ along the endpoints of the boundary cross and the adjustment of the slopes needed to obtain continuity is just small so that the overall $L$-bi-Lipschitz property is not affected much.

For the former, we mimic the strategy of S.~Daneri and A.~Pratelli \cite{daneri-pratelli} who were also able to connect an affine replacement of a bi-Lipschitz function to the original map. The latter is due to the fact that $\tilde y$ and $y$ are suitably close to each other (as expressed by the property \eqref{LargeK}) which assures that the change of slope on the cross needed for the cut-off will depend just on $L$.

We will show in the next section that $u_\delta$ is $18L$-bi-Lipschitz on $\mathcal{Q}$; cf.\ \eqref{biL}. Therefore, we can apply Theorem~\ref{extensionTheorem} to extend $u_\delta$ from $\mathcal{Q}$ (without changing the notation) to each 
square of the tiling. As for every square $\mathcal{D}(z_i,r)$ of the tiling we have that $\partial u_\delta(\mathcal{D}(z_i,r))= u_\delta(\partial \mathcal{D}(z_i,r))$ we see that the extended mapping is globally injective on $\O_r$. 

\vspace{1ex}

\noindent \emph{Proof of \eqref{numbers_xi}:}

For $w_\alpha w_\alpha^i \subset \mathcal{Q}^\mathrm{inner}$, we notice that the function $t \mapsto \big|\tilde y\big(w_\alpha+t(w_\alpha^i - w_\alpha)\big)- y(w_\alpha) \big| $ is continuous on $[0,1]$ and, owing to \eqref{LargeK}, smaller or equal than $\frac{r}{12L^3}$ in $0$ while in $t=1$ we have that 
$$
\Big|\tilde y(w_\alpha^i) - \tilde y(w_\alpha)+ \tilde y(w_\alpha)- y(w_\alpha) \Big| \geq \Big|\frac{r}{L}-\frac{r}{12L^3} \Big| \geq \frac{r}{4L};
$$
which yields the existence of  $\xi_\alpha^i \in [0,1]$ such that  
$$
\big|\tilde y\big(w_\alpha+t(w_\alpha^i - w_\alpha)\big)- y(w_\alpha) \big| = \frac{r}{4L}.
$$

To establish the bounds on $\xi_\alpha^i$, we note that
\begin{align*}
\frac{r}{4L}&=\Big|\tilde y\big(w_\alpha+\xi_\alpha^i(w_\alpha^i - w_\alpha)\big)- y(w_\alpha) \Big| =
\Big|\tilde y\big(w_\alpha+\xi_\alpha^i(w_\alpha^i - w_\alpha)\big) + \tilde y(w_\alpha)- \tilde y(w_\alpha) - y(w_\alpha) \Big| \\ &\leq L\xi_\alpha^i r + \frac{r}{12L^3} \leq 
L\xi_\alpha^i r + \frac{r}{12L},
\end{align*}
i.e.\ $\xi_\alpha^i \geq 1/(6L^2)$. On the other hand we have that 
\begin{align*}
\frac{r}{4L}&=\Big|\tilde y\big(w_\alpha+\xi_\alpha^i(w_\alpha^i - w_\alpha)\big)- y(w_\alpha) \Big| =
\Big|\tilde y\big(w_\alpha+\xi_\alpha^i(w_\alpha^i - w_\alpha)\big) + \tilde y(w_\alpha)- \tilde y(w_\alpha) - y(w_\alpha) \Big| \\& \geq \frac{r}{L}\big(\xi_\alpha^i - \frac{1}{12L^2}\big) \geq \frac{r}{L}\big(\xi_\alpha^i - \frac{1}{12}\big),
\end{align*}
which is satisfied if $0\leq \xi_\alpha^i \leq 1/3$.

In the case when $w_\alpha w_\alpha^i  \not\subset \mathcal{Q}^\mathrm{inner}$, we proceed in a similar way and rely just on the bi-Lipschitz property of $y$; exploiting \eqref{LargeK} is not necessary.

\vspace{2ex}

\noindent \textit{\textbf{Section 3 of the proof: Bi-Lipschitz property of $u_\delta$:}}\\
The function $u_\delta$ defined in the previous section is continuous on the grid $\mathcal{Q}$ and we claim that it is even  bi-Lipschitz, i.e.\ (as long as \eqref{LargeK} holds true)
\begin{equation}
18 L |z-z'| \geq |u_\delta(z)-u_\delta(z')| \geq \frac{1}{18L}|z-z'| \qquad \quad \forall z,z' \in \mathcal{Q}
\label{biL}
\end{equation}

The proof of this claim is the content of this section and will be performed in several steps.

\noindent \emph{Step 1 of the proof of \eqref{biL}: Suppose that $z$ and $z'$ lie in  $Z_\alpha$.} \\
Let us first consider the situation when both $z, z'$ lie on the same edge; i.e.\ $z,z' \in w_\alpha w_\alpha^i$ for some $i=1\ldots4$. In this a case $u_\delta$ is affine and we have that
\begin{align*}
\frac{|u_\delta(z)-u_\delta(z')|}{|z-z'|} &= \frac{\big|u_\delta(w_\alpha) - u_\delta(p_\alpha^i)\big|}{\xi_\alpha^i r} \\&\!\!\!\!\!\!\!\!=
\begin{cases}
\frac{\big|\tilde y(p_\alpha^i) - \tilde y(w_\alpha) + \tilde y(w_\alpha)-y(w_\alpha)\big|}{\xi_\alpha^i r} \geq \frac{1}{L}  -  \frac{1}{\xi_\alpha^i r} \frac{r}{12L^3} \geq \frac{1}{L} -  \frac{6 L^2}{r} \frac{r}{12L^3} \geq \frac{1}{2L} & \!\!\!\!\text{if $w_\alpha w_\alpha^i \subset \mathcal{Q}^\mathrm{inner}$}\\
\frac{\big|y(p_\alpha^i)-y(w_\alpha)\big|}{\xi_\alpha^i r} \geq \frac{1}{L} \geq \frac{1}{2L} &\!\!\!\! \text{if $w_\alpha w_\alpha^i \not \subset \mathcal{Q}^\mathrm{inner}$.}
\end{cases}
\end{align*}
Similarly,
\begin{align*}
\frac{|u_\delta(z)-u_\delta(z')|}{|z-z'|} &= \frac{\big|u_\delta(w_\alpha) - u_\delta(p_\alpha^i)\big|}{\xi_\alpha^i r} \\ 
&\!\!\!\!\!\!\!\!= \begin{cases}
\frac{\big|\tilde y(p_\alpha^i) - \tilde y(w_\alpha) + \tilde y(w_\alpha)-y(w_\alpha)\big|}{\xi_\alpha^i r} \leq L  +  \frac{1}{\xi_\alpha^i r} \frac{r}{12L^3} \leq L +  \frac{6 L^2}{r} \frac{r}{12L^3} \leq 2L & \text{if $w_\alpha w_\alpha^i \subset \mathcal{Q}^\mathrm{inner}$}\\
\frac{\big|y(p_\alpha^i)-y(w_\alpha)\big|}{\xi_\alpha^i r} \leq L \leq 2L & \text{if $w_\alpha w_\alpha^i \not \subset \mathcal{Q}^\mathrm{inner}$.}
\end{cases}
\end{align*}

If $z$ and $z'$ are not on the same edge let, for example, $z \in w_\alpha p_\alpha^1$ and $z' \in w_\alpha p_\alpha^2$. Moreover, we may assume, without loss of generality, that
$$
|u_\delta(z)-y(w_\alpha)| \leq |u_\delta(z') - y(w_\alpha)|
$$ and, hence,  define $z''$ in the segment $w_\alpha z'$ such that 
$$
|u_\delta(z)-y(w_\alpha)|=|u_\delta(z'')-y(w_\alpha)|.
$$
Then, as the points $u_\delta(z)$, $u_\delta(z'')$ and $u_\delta(z')$ form a triangle that is obtuse at $u_\delta(z'')$ (cf. also Figure \ref{obtuse}) we may apply Remark \ref{ObtuseTriangle} to obtain
\begin{align}
|u_\delta(z) - u_\delta(z')| &\geq \frac{1}{\sqrt{2}}\Big(|u_\delta(z) - u_\delta(z'')| + |u_\delta(z') - u_\delta(z'')| \Big) \nonumber\\ &\geq 
\frac{1}{\sqrt{2}}\Big(|u_\delta(z) - u_\delta(z'')| + \frac{1}{2L}|z'-z''| \Big)
\label{Step1_1}
\end{align}
since the points $z'$, $z''$ lie on the same edge where we already proved the bi-Lipschitz property. Further, by the fact that $u_\delta$ is piecewise affine on the cross, \footnote{Notice that on any the segment $w_\alpha p_\alpha^i$ we can write $u_\delta(t) = u_\delta(w_\alpha) + t (u_\delta(p_\alpha^i) -u_\delta(w_\alpha))$. Therefore, the points $z$,$z''$ correspond to such $t$, $t''$ that $t |u_\delta(p_\alpha^1) -u_\delta(w_\alpha)| = t''|u_\delta(p_\alpha^2) -u_\delta(w_\alpha)|$. By definition, however, $|u_\delta(p_\alpha^1) -u_\delta(w_\alpha)| = |u_\delta(p_\alpha^2) -u_\delta(w_\alpha)| = \frac{r}{4L}$ so that $t=t''$.}
\begin{figure}[h!]
\centering
\includegraphics[width = 0.5\paperwidth]{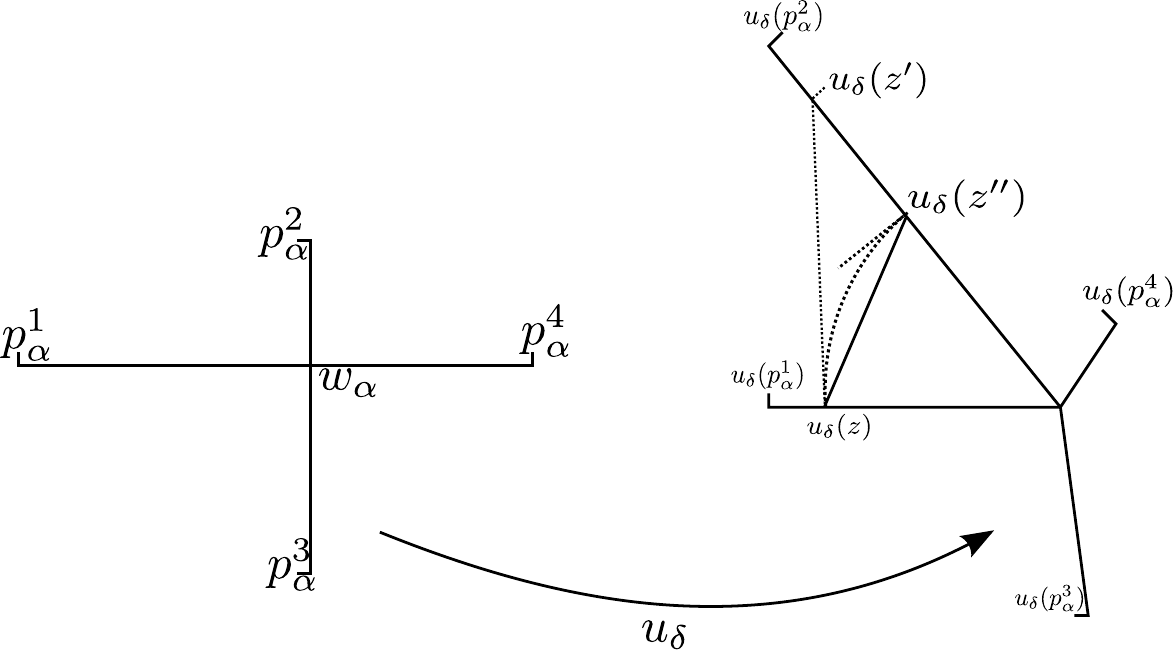}
\caption{The obtuse triangle formed by  $u_\delta(z)$, $u_\delta(z'')$ and $u_\delta(z')$ in the image of the boundary cross as needed in Step 1. Notice that since $u_\delta$ is piecewise affine on the cross, each segment of the cross forms again a part of a straight line.}
\label{obtuse}
\end{figure}
\begin{align*}
\frac{|u_\delta(z) - u_\delta(z'')|}{|z-z''|} &= \frac{|u_\delta(p_\alpha^1) - u_\delta(p_\alpha^2)|}{|p_\alpha^1-p_\alpha^2|} \\ & \!\!\!\!\!\!\!\!\!\!\!\!\!\!\!\!= 
\begin{cases}
\frac{|\tilde y(p_\alpha^1) - \tilde y(p_\alpha^2)|}{|p_\alpha^1-p_\alpha^2|} \geq \frac{1}{L} & \text{if both $p_\alpha^1$, $p_\alpha^2$ lie in $\mathcal{Q}^\mathrm{inner}$} \\
\frac{|y(p_\alpha^1) - y(p_\alpha^2)|}{|p_\alpha^1-p_\alpha^2|} \geq \frac{1}{L} & \text{if neither $p_\alpha^1$ nor $p_\alpha^2$ lies in $\mathcal{Q}^\mathrm{inner}$}  \\
\frac{|\tilde y(p_\alpha^1) - \tilde y(p_\alpha^2) + \tilde y(p_\alpha^2) - y(p_\alpha^2)|}{|p_\alpha^1-p_\alpha^2|} \geq \frac{1}{L} - \frac{1}{|p_\alpha^1-p_\alpha^2|}\frac{r}{12L^3} \geq \frac{1}{2L}, & \text{$p_\alpha^1 \in \mathcal{Q}^\mathrm{inner}$ $p_\alpha^2, \notin \mathcal{Q}^\mathrm{inner}$}
\end{cases}
\end{align*}
where we realized that $|p_\alpha^1-p_\alpha^2| \geq \frac{r}{6L^2}$ because the triangle formed by the points $p_\alpha^1$,$w_\alpha$, $p_\alpha^2$ is right angled or a line. Notice also that the situation when  $p_\alpha^1 \notin \mathcal{Q}^\mathrm{inner}$, $p_\alpha^2 \in \mathcal{Q}^\mathrm{inner}$ is completely symmetrical to the already covered case.
So, returning to \eqref{Step1_1}, we have by the triangle inequality 
$$
|u_\delta(z) - u_\delta(z')| \geq \frac{\sqrt{2}}{4L} |z-z'|\ .
$$

On the other hand, by exploiting that the triangle formed by the points $z$,$z'$ and $w_\alpha$ is either right angled or a line, we get that
$$
|u_\delta(z) - u_\delta(z')| \leq |u_\delta(z) - y(w_\alpha) + y(w_\alpha) - u_\delta(z')| \leq 2L\big(|z-w_\alpha|+|z'-w_\alpha|\big) \leq 2L \sqrt{2} |z-z'|.
$$

\vspace{2ex}

\noindent \emph{Step 2 of the proof of \eqref{biL}: Suppose that $z \notin Z_\alpha$ and $z' \notin Z_\beta$ for all $\alpha, \beta$.} \\
Notice that we only have to investigate the case when $z \in \mathcal{Q}^\mathrm{inner}$ and $z' \notin \mathcal{Q}^\mathrm{inner}$ for the other options are trivial. Then, however, we have that $|z-z'| \geq \frac{r}{6L^2}$ and so the Lipschitz property follows immediately as 
$$
|u_\delta(z)-u_\delta(z')| \leq |y(z)-\tilde y(z)+\tilde y(z)-\tilde y(z')| \leq \frac{r}{6L^2}\,\frac{1}{2L} + L|z-z'| \leq 2L|z-z'|.
$$
On the other hand,
$$
\frac{|u_\delta(z)-u_\delta(z')|}{|z-z'|} = \frac{|y(z)-\tilde y(z)+\tilde y(z)-\tilde y(z')|}{|z-z'|} \geq \frac{1}{L} - \frac{r}{12L^2 |z-z'|} \geq \frac{1}{2L} 
$$\vspace{2ex}

\noindent \emph{Step 3 of the proof of \eqref{biL}: Suppose that $z \in Z_\alpha$ and $z' \notin Z_\beta$ for all $\beta$.} \\
To obtain the lower bound in \eqref{biL} we rely on Remark \ref{BallArgument}; indeed the choice of $z$, $z'$ is such that  $u_\delta(z')$ lies outside the ball $\mathcal{B}(y(w_\alpha); \frac{r}{4L})$ while $u_\delta(z) \in \mathcal{B}(y(w_\alpha); \frac{r}{4L})$. In particular, we may assume that $u_\delta(z) $ lies on the segment $y(w_\alpha) u_\delta(p_\alpha^1)$ (recall that $u_\delta$ is affine on the cross). So, 
$$
|u_\delta(z) - u_\delta(z')| \geq \frac{|u_\delta(p_\alpha^1)-u_\delta(z)| + |u_\delta(p_\alpha^1)-u_\delta(z')|}{3} \ .
$$ Clearly, we only have to care about the latter term on the right hand side. Employing  \eqref{LargeK} and the triangle inequality, we get that 
\begin{align*}
\frac{|u_\delta(p_\alpha^1) - u_\delta(z')|}{|p_\alpha^1-z'|} \geq 
\begin{cases}
\frac{|\tilde y(p_\alpha^1) - \tilde y(z')|}{|p_\alpha^1-z'|} \geq \frac{1}{L} & \text{if $p_\alpha^1, z' \in \mathcal{Q}^\mathrm{inner}$} \\
\frac{|y(p_\alpha^1) - y(z')|}{|p_\alpha^1-z'|} \geq \frac{1}{L} & \text{if $p_\alpha^1, z' \notin \mathcal{Q}^\mathrm{inner}$} \\ 
\frac{|\tilde y(p_\alpha^1) - \tilde y(z') + \tilde y(z') - y(z')|}{|p_\alpha^1-z'|} \geq \frac{1}{L}-\frac{6L^2}{12L^3} \geq \frac{1}{2L} & \text{if $p_\alpha^1 \in \mathcal{Q}^\mathrm{inner}$ and $z' \notin \mathcal{Q}^\mathrm{inner}$;}
\end{cases}
\end{align*}
where, in the last case, $p_\alpha^1$ and $z'$ necessarily lie in different edges and so $|p_\alpha^1-z'| \geq \frac{r}{6L^2}$. Notice that since the r\^{o}le of $p_\alpha^1$ and $z'$ is symmetric we really exhausted all possibilities belonging to this step. Summing up,
$$
|u_\delta(z) - u_\delta(z')| \geq \frac{|z-z'|}{6L}.
$$

To obtain the upper bound,  we first realize that if $z' $ is at the boundary to the cross, i.e.\@ $z' = p_\alpha^i$ for some $i=1\ldots4$, the procedure from Step 2 applies in verbatim. Therefore, we may restrict our attention to the situation in which $z' $ is strictly in the interior of the cross; then,  since 
all $p_\alpha^i$ are at distance at most $r/3$ from $w_\alpha$ and since $z' \notin w_\alpha p_\alpha^i$ $\forall i$, at least one of these $p_\alpha^i$ has to satisfy that the triangle $z, p_\alpha^i, z'$ has an obtuse (or right) angle at $p_\alpha^i$ (see Figure \ref{triangle}) -- let it for notational convenience be $ p_\alpha^1$. So, we are in the position to apply Remark \ref{ObtuseTriangle} below and estimate 
\begin{align*}
&|u_\delta(z) - u_\delta(z')|=|u_\delta(z) - u_\delta(p_\alpha^1) +  u_\delta(p_\alpha^1) - u_\delta(z')| \\
&\qquad \leq 
\left \{
\begin{array}{ll}
|u_\delta(z) - \tilde y(p_\alpha^1) +  \tilde y(p_\alpha^1) - \tilde y(z')| & \\
\quad \leq 2\sqrt{2} L \big(|z-p_\alpha^1|+|p_\alpha^1-z'|\big) \leq 4L|z-z'|\big) & \text{if $z' \in \mathcal{Q}^\mathrm{inner}$ and $p_\alpha^1 \in \mathcal{Q}^\mathrm{inner}$} \\
|u_\delta(z) - y(p_\alpha^1) + \tilde y(p_\alpha^1) - \tilde y(z') + y(p_\alpha^1) - \tilde y(p_\alpha^1)|  & \\
\quad \leq 2 \sqrt{2} L  \big(|z - p_\alpha^1| + |p_\alpha^1 - z'|  \big) + \frac{r}{6L^2}\, \frac{1}{2L} \leq 5L|z-z'| & \text{if $z' \in \mathcal{Q}^\mathrm{inner}$ and $p_\alpha^1 \notin \mathcal{Q}^\mathrm{inner}$} \\
|u_\delta(z) - y(p_\alpha^1) +  y(p_\alpha^1) - y(z')| & \\
\quad \leq 2\sqrt{2} L \big(|z-p_\alpha^1|+|p_\alpha^1-z'|\big) \leq 4L|z-z'|\big) & \text{if $z' \notin \mathcal{Q}^\mathrm{inner}$ and $p_\alpha^1 \notin \mathcal{Q}^\mathrm{inner}$} \\
|u_\delta(z) - y(p_\alpha^1)  + \tilde y(p_\alpha^1) - \tilde y(z') + y(p_\alpha^1) - \tilde y(p_\alpha^1) |  & \\
\quad \leq 2 \sqrt{2} L  \big(|z - p_\alpha^1| + |p_\alpha^1 - z'|  \big) + \frac{r}{6L^2}\, \frac{1}{2L} \leq 5L|z-z'| & \text{if $z' \notin \mathcal{Q}^\mathrm{inner}$ and $p_\alpha^1 \in \mathcal{Q}^\mathrm{inner}$} \\
\end{array} \right.
\end{align*} 
where we used that we already proved the bi-Lipschitz property inside the cross $Z_\alpha$ and in the second and fourth case we used that $\frac{r}{6L^2} \leq |p_\alpha^1-z'|$ since, in this cases, $p_\alpha^1$ and $z'$ have to lie on different edges. 

\begin{figure}[h!]
\centering
\includegraphics[width = 0.3 \paperwidth]{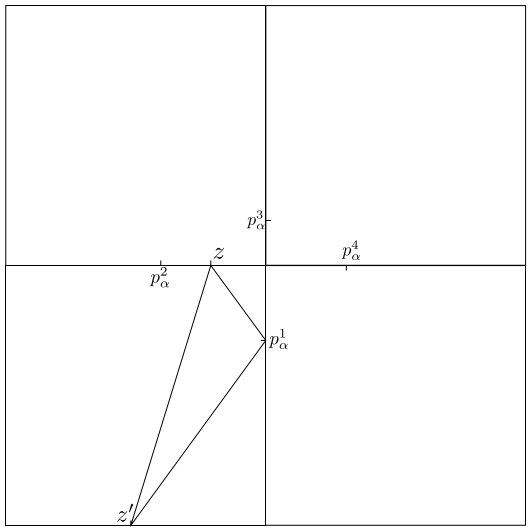}
\caption{The obtuse triangle formed by $z, z', p_\alpha^1$ as needed in Step 2}
\label{triangle}
\end{figure}

\vspace{2ex}

\noindent \emph{Step 4 of the proof of \eqref{biL}: Suppose that $z\in Z_\alpha$, $z' \in Z_\beta$ with $\alpha \neq \beta$}. \\
The last case we need to consider is when $z$, $z'$ lie in two crosses corresponding to two different vertices, respectively. In such a case $|w_\alpha - w_\beta|\geq r$ and also, from definition, $|u_\delta(z')-y(w_\beta)| \leq \frac{r}{4L}$ (as $z'$ belongs to the cross). Therefore,
$$
|y(w_\alpha) - u_\delta(z')| = |y(w_\beta) - y(w_\alpha) + u_\delta(z') - y(w_\beta)| \geq \frac{r}{L}-\frac{r}{4L} > \frac{r}{4L};
$$
i.e.\ $u_\delta(z') \notin \mathcal{B}(y(w_\alpha) ; \frac{r}{4L})$ and we may apply Remark \ref{BallArgument} to get
 (with $p_\alpha^1$ being the extremal of $Z_\alpha$ lying on the same edge as $z$)
$$
|u_\delta(z') - u_\delta(z)| \geq \frac{|u_\delta(p_\alpha^1)-u_\delta(z)| + |u_\delta(p_\alpha^1)-u_\delta(z')|}{3}.
$$
Similarly, also $u_\delta(p_\alpha^1) \notin \mathcal{B}(y(w_\beta) ; \frac{r}{4l})$ as
$$
|y(w_\beta) - y(w_\alpha) - u_\delta(p_\alpha^1) + y(w_\alpha)| \geq \frac{r}{L}-\frac{r}{4L} > \frac{r}{4L};
$$
and hence, again relying on Remark \ref{BallArgument} ($p_\beta^2$ denotes the extremal of $Z_\beta$ lying on the same edge as $z'$)
\begin{align*}
|u_\delta(z') - u_\delta(z)| &\geq \frac{|u_\delta(p_\alpha^1)-u_\delta(z)| + |u_\delta(p_\alpha^1)-u_\delta(p_\beta^2)| + |u_\delta(p_\beta^2)-u_\delta(z')|}{9} \\ &\geq \frac{1}{18L} \Big(|p_\alpha^1-z| + |p_\alpha^1-p_\beta^2| + |p_\beta^2-z'| \Big) \geq 
\frac{|z-z'|}{18L},
\end{align*}
by applying the triangle inequality. Moreover, we exploited that $|u_\delta(p_\alpha^1)-u_\delta(z)| \geq \frac{|p_\alpha^1-z|}{2L}$ as $p_\alpha^1$ and $z$ lie on the same edge within the same cross (cf.~Step 1); similarly also for $|u_\delta(p_\beta^2)-u_\delta(z')|$. Finally, we can see that $|u_\delta(p_\alpha^1)-u_\delta(p_\beta^2)| \geq \frac{|p_\alpha^1-p_\beta^2|}{2L}$ by the same procedure as employed in Step 3.

It, finally, remains to prove the upper bound in \eqref{biL}. But this follows from the fact that, since $z$,$z'$ belong to different crosses, there has to exist a point $p \in \mathcal{Q}$ that does not belong to any cross such that the triangle $z p z'$ is obtuse (or right) at $p$. Here, we admit also the extreme case in which $z p z'$ lie on a straight line; in this case, we understand the angle at $p$ to be $\pi$ and hence obtuse. Therefore, exploiting \eqref{ObtuseTriangle}, readily gives 
$$
|u_\delta (z) - u_\delta (p) + u_\delta (p)-u_\delta (z')| \leq 5L \big(|z-p|+|z'-p|\big) \leq \frac{10L}{\sqrt{2}}(|z-z'|) \leq 18L|z-z'|\ .
$$

\hfill $\Box$

\bigskip

\begin{remark}[Obtuse triangle inequality]
\label{ObtuseTriangle}
Let us consider a triangle formed by three points $z, p_1, z' \in \mathbb{R}^2$ such that the angle $\gamma$ at $p_1$ is obtuse or right (= larger or equal to $\pi/2$). Then it follows from the cosine law
\begin{align}
|z-z'| = \sqrt{|z-p_1|^2+|z'-p_1|^2 - 2|z-p_1|\, |z'-p_1|\cos(\gamma)} \geq \sqrt{|z-p_1|^2+|z'-p_1|^2}\nonumber\\ \geq 
\frac{\sqrt{2}}{2} \big( |z-p_1|+|z'-p_1|\big)\ .
\end{align}
\end{remark}

\begin{remark}[Ball separation inequality]
\label{BallArgument}
Let us consider a ball centered at $w$ with radius $\xi$ and a point $a$ lying inside this ball on the segment $w b$ with $|b-w|=\xi$. Moreover, let $c$ be a point lying outside this ball. Then, since $b$ is the nearest to $a$ lying on the boundary of the mentioned ball it has to hold that $|a-b| \leq |a-c|$ and so by the triangle inequality\footnote{Indeed $|b-c|\leq |a-b|+|a-c| \leq 2 |a-c|$ and so $|a-b|+|b-c| \leq 3|a-c|$ as desired.}
$$
|a-c| \geq \frac{|a-b|+|b-c|}{3}.
$$
\end{remark}

\bigskip

 \bigskip

{\bf Acknowledgment:}  We are indebted to two anonymous referees for many remarks, many useful suggestions, and for the extremely careful reading of the manuscript.  This work was supported by the GA\v{C}R grants P201/10/0357, P201/12/0671, P107/12/0121, 14-15264S, 14-00420S (GA\v{C}R), and the AV\v{C}R-DAAD project  CZ01-DE03/2013-2014.

\bigskip\bigskip
\vspace*{1cm}
\bigskip

\end{document}